\newcommand{\mylabel}[2]{#2\def\@currentlabel{#2}\label{#1}}
\theoremstyle{definition}
\newtheorem{defn}[equation]{Definition}
\newtheorem*{rmk}{Remark}
\theoremstyle{plain}
\newtheorem{lem}[equation]{Lemma}
\newtheorem{prop}[equation]{Proposition}
\newtheorem{thm}[equation]{Theorem}
\numberwithin{equation}{section}
\numberwithin{figure}{section}
\numberwithin{table}{section}
\newtheorem{ourtheorem}{Theorem}
\newtheorem{ourconjecture}[ourtheorem]{Conjecture}
\theoremstyle{definition}
\newtheorem*{prf}{Proof}
\newtheorem*{note}{Note}
\newcommand{\bdy}{\partial}
\newcommand{\e}{\epsilon}
\newcommand{\alg}{\mathcal A}
\newcommand{\STA}{S_{\alg_P}^A}
\newcommand{\STD}{S_{\alg_P}^D}
\newcommand{\STDp}{S_{\alg_{P'}}^D}
\newcommand{\HD}{\mathcal H}
\newcommand{\F}{\mathbb F}
\newcommand{\Z}{\mathbb Z}
\newcommand{\s}{\mathbf{s}}
\newcommand{\gens}{\mathcal{G}}
\newcommand{\cfahat}{\widehat{\mathit{CFA}}}
\newcommand{\cfdhat}{\widehat{\mathit{CFD}}}
\newcommand{\mza}{m}
\newcommand{\GnA}{\mathcal{G}_{n,\alg_P}}
\newcommand{\GmA}{\mathcal{G}_{m,\alg_P}}
\newcommand{\FnA}{\mathcal{F}_{n,\alg_P}}
\newcommand{\FmA}{\mathcal{F}_{m,\alg_P}}
\newcommand{\GnAp}{\mathcal{G}_{n,\alg_{P'}}}
\newcommand{\FnAp}{\mathcal{F}_{n,\alg_{P'}}}
\newcommand{\fmnpp}{\mathcal F_{m,n,P}}
\newcommand{\fmn}{\mathcal F_{m,n}}
\newcommand{\gmn}{\mathcal G_{m,n}}
\newcommand{\gmnpp}{\mathcal G_{m,n,P}}
\newcommand{\sign}{\mathrm{sign}}
\newcommand{\Ainf}{\mathcal{A}_\infty}
\newcommand{\fmnp}{\mathcal F_{m,n}'}
\newcommand{\fempty}{{\mathcal F_{m,n}^{\circ}}'}
\newcommand{\thin}{\mathrm{t}\mathcal F_{m,n}'}
\newcommand{\plthin}{\mathrm{pt}{\mathcal F_{m,n}'}}
\newcommand{\plempty}{\mathrm{p}{\mathcal F_{m,n}^{\circ}}'}
\newcommand{\cf}{\mathit{CF}}
\newcommand{\hf}{\mathit{HF}}
\newcommand{\hfhat}{\widehat{\hf}}
\newcommand{\cfhat}{\widehat{\cf}}
\newcommand{\hft}{\widetilde{\rm{HF}}}
\newcommand\alphas{\mbox{\boldmath$\alpha$}}
\newcommand\betas{\mbox{\boldmath$\beta$}}
\newcommand{\gen}[1]{\mathbf{#1}}
\newcommand{\x}{\gen{x}}
\newcommand{\y}{\gen{y}}
\newcommand{\z}{\gen{z}}
\newcommand{\gt}{\gen{t}}
\newcommand{\gs}{\gen{s}}
\newcommand{\w}{\gen{w}}
\newcommand{\ga}{\gen{a}}
\newcommand{\gb}{\gen{b}}
\newcommand{\ZZ}{\mathcal Z}
\newcommand{\algz}{\alg_{\Z}}
\newcommand{\gr}{\mathrm{gr}}
\DeclareMathOperator{\id}{id}
\begin{document}

 \title[{Bordered Floer homology with integral coefficients for manifolds with torus boundary}]{Bordered Floer homology with integral coefficients for manifolds with torus boundary}

\author[Douglas Knowles]{Douglas Knowles}
\address{Learning Strategies Center, Cornell University\\ Ithaca, NY 14850}
\email{ddk62@cornell.edu}
\urladdr{\href{https://math.cornell.edu/doug-knowles}{https://math.cornell.edu/doug-knowles}}
\author{Ina Petkova}
\address {Department of Mathematics, Dartmouth College\\ Hanover, NH 03755}
\email {ina.petkova@dartmouth.edu}
\urladdr{\href{http://math.dartmouth.edu/~ina}{http://math.dartmouth.edu/~ina}}

\begin{abstract}
We provide a combinatorial definition of a bordered Floer theory with $\Z$ coefficients for manifolds with torus boundary. Our bordered Floer structures recover the combinatorial Heegaard Floer homology from \cite{hfz}.
\end{abstract}

\maketitle



\section{Introduction}

In \cite{osz8, osz14}, Ozsv\'ath and Szab\'o introduced a package of invariants for closed, oriented 3-manifolds,  called Heegaard Floer homology. Using a Heegaard diagram presentation of the 3-manifold $Y$, one constructs a chain complex by counting certain holomorphic disks in a symmetric product of the Heegaard surface. The homology of this complex is an invariant of the 3-manifold. The simplest version  $\hfhat(Y; \Z/2\Z)$ is a finitely generated vector space over $\F_2 = \Z/2\Z$, and is obtained by a mod $2$ count of holomorphic disks. By using coherent orientation systems, one can also count with sign, resulting in a finitely generated abelian group $\hfhat(Y; \Z)$. Many 3-dimensional applications only need the theory over $\F_2$. Still, the theory over $\Z$ is generally much more powerful \cite{osz-tri-symp, pm, jm2}. 

In \cite{swnice}, Sarkar and Wang gave a combinatorial description of the Heegaard Floer chain complex over $\F_2$ using a special class of Heegaard diagrams called \emph{nice}. Ozsv\'ath, Stipsicz, and Szab\'o then gave a combinatorial proof of the invariance of a stabilized version $\hft$ of the homology \cite{nice}, and later provided an integral lift of this combinatorial definition \cite{hfz}. While these combinatorial reformulations are significant advances in resolving the computational difficulties arising from the analytical nature of the original construction, modifying a Heegaard diagram to a nice one typically comes at the cost of increasing the size of the complex immensely. Thus, combining combinatorial methods with cut-and-paste techniques is the natural next step.

In \cite{bfh2}, Lipshitz, Ozsv\'ath, and Thurston generalized Heegaard Floer homology to a theory for 3-manifolds with parametrized boundary. To a $3$-manifold $Y$ with parametrized boundary, one can associate an $\Ainf$-module, also called a \emph{type $A$ structure},  $\cfahat(Y)$ over a differential graded algebra (DGA) $\alg(\bdy Y)$, or equivalently a \emph{type $D$ structure} (roughly,  a differential graded module) $\cfdhat(Y)$ over $\alg(-\bdy Y)$. If $Y$ is a closed 3-manifold obtained by a gluing $Y_1\cup_{\bdy}Y_2$, then the homology of the derived tensor product $\cfahat(Y_1)\widetilde{\otimes}_{\alg(\bdy Y_1)} \cfdhat(Y_2)$  (which often has a smaller model) is isomorphic to $\hfhat(Y; \F_2)$. These structures can be defined  analytically, as well as combinatorially using nice bordered Heegaard diagrams. 

In this paper we extend the definition of bordered Floer homology to $\mathbb{Z}$-coefficients for torus boundary. We follow the combinatorial approach from \cite{hfz} and define \emph{formal generators} and \emph{formal flows} which retain certain combinatorial data of actual generators and flows (the objects counted by the Heegaard Floer differential) for a nice bordered Heegaard diagram. We then define \emph{bordered sign assignments}, that is, maps from formal flows to $\{\pm 1\}$ which satisfy certain properties. We then show that these sign assignments are unique in a certain sense, and use them to count flows on an actual nice bordered Heegaard diagram with sign, to obtain the following.

\begin{ourtheorem}\label{thm:ind}
Given a nice bordered Heegaard diagram $\HD$ for a bordered 3-manifold with torus boundary, along with some additional choices, we define a right type $A$ structure $\cfahat(\HD; \Z)$ over $\mathcal A_{\Z}(\bdy \HD)$, and a left type $D$ structure $\cfdhat(\HD; \Z)$ over $\mathcal A_{\Z}(-\bdy \HD)$, where $\mathcal A_{\Z}(\bdy \HD)\cong \mathcal A_{\Z}(-\bdy \HD)$ is a graded algebra over $\Z$. Moreover, these structures do not depend (in an appropriate sense)\footnote{See Theorems~\ref{thm:indA} and \ref{thm:indD} for a precise statement of this result.}
 on the choices made in their definitions. 
\end{ourtheorem}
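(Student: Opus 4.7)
The plan is to mirror the combinatorial sign-assignment strategy that Ozsv\'ath, Stipsicz, and Szab\'o use in \cite{hfz} for the closed combinatorial Heegaard Floer theory, adapted so that it interacts correctly with the torus algebra $\alg_\Z(\bdy\HD)$ and with the Reeb chords on $\bdy\HD$. In outline, the steps I would carry out are: (1) extract purely combinatorial data from $\HD$ in the form of formal generators and formal flows, (2) axiomatize a \emph{bordered sign assignment} $\sigma$ from formal flows to $\{\pm 1\}$ so that replacing each $\F_2$-count by a signed $\sigma$-count produces valid type $A$ and type $D$ structures, (3) prove that such a $\sigma$ exists and is essentially unique, and (4) use this uniqueness to deduce the invariance statement.

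Setting up the formal model is largely bookkeeping: a formal generator records a choice of one intersection point on each $\alpha$-curve as in the $\F_2$ theory, and a formal flow records the combinatorial data needed to compute the structure maps---namely, the domain (an empty embedded rectangle or bigon in a nice diagram) together with the sequence of Reeb chords read off along $\bdy\HD$. The axioms on $\sigma$ must encode, on one hand, the closed-case identities of \cite{hfz} that guarantee $\partial^2=0$ for each domain admitting two distinct decompositions $D=D_1\ast D_2=D_1'\ast D_2'$, and on the other hand new identities expressing compatibility with the algebra: concatenation of a flow with an algebra element and multiplication of two Reeb chord sequences must interact with $\sigma$ in a way that yields the full $\Ainf$ relation for $\cfahat(\HD;\Z)$ and the type $D$ structure equation for $\cfdhat(\HD;\Z)$, and in particular lifts the multiplication and differential on $\alg_\Z(\bdy\HD)$ consistently.

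The hardest step, and the genuinely new content beyond \cite{hfz}, will be the existence and uniqueness of $\sigma$ in the presence of the algebra actions. In \cite{hfz}, existence is obtained by realizing the sign axioms as a cocycle-type equation on a cellular complex whose cells are indexed by domains, and then exhibiting a filling by induction on complexity; uniqueness is obtained by showing that the difference of two sign assignments is a coboundary, which a diagonal change of basis converts into an isomorphism. Transporting this to the bordered setting requires enlarging the combinatorial complex by attaching cells for each Reeb chord sequence, and the resulting cocycle equation acquires new terms coming from algebra multiplication. My expectation is that the relevant acyclicity statement can be proved by filtering by the length of the Reeb chord sequence and reducing, on the associated graded, to the closed-case acyclicity of \cite{hfz} together with a signed version of the torus algebra relations already established for the $\F_2$ theory in \cite{bfh2}.

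Once $\sigma$ is in hand and shown to be unique up to coboundary, the remaining content of the theorem is formal. Any two sign assignments for a fixed $\HD$ produce isomorphic structures via an explicit diagonal change of basis on formal generators, so the structures are independent of the choice of $\sigma$; independence from the other auxiliary data used to set up the formal problem (such as the ordering of generators employed in the induction) is handled as in \cite{hfz}. For the full invariance asserted in Theorems~\ref{thm:indA} and \ref{thm:indD}, namely independence under the moves relating two nice bordered Heegaard diagrams $\HD$ and $\HD'$, the plan is to run a parallel sign-assignment problem for the formal chain maps and higher homotopies induced by each move, prove existence and uniqueness for those, and conclude by naturality: the $\F_2$ chain homotopy equivalences of \cite{bfh2} lift to $\Z$ because the signs on the corresponding maps and homotopies are themselves forced, up to coboundary, by the same axioms on both sides.
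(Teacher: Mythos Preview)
Your overall architecture---formal generators and flows, axioms for a bordered sign assignment, existence/uniqueness, then independence from choices---matches the paper's. However, there are two genuine gaps in your plan relative to what Theorem~\ref{thm:ind} actually asserts and what the paper actually proves.

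First, your uniqueness claim is too strong and in fact false. Bordered sign assignments of type $A$ (and likewise of type $D$) are \emph{not} unique up to gauge equivalence: the paper shows (Propositions~\ref{prop:fourA} and \ref{prop:fourB}) that there are exactly four gauge classes in each case, indexed by a parameter $C\in\{+,-\}^2$. This is why Theorem~\ref{thm:ind} speaks of ``additional choices'': the class $C$ is genuine input data, and the resulting structure $\cfahat_{C,P}(\HD;\Z)$ depends on it. The independence statements of Theorems~\ref{thm:indA} and \ref{thm:indD} fix $C$ and only assert independence from the ordering/orientation of curves and from the choice of representative within the gauge class $S_C$. Your cocycle/acyclicity heuristic would have to be modified to account for the nontrivial cohomology class, and your ``diagonal change of basis'' argument only applies within a single gauge class.

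Second, your final paragraph overshoots: you propose to prove invariance under moves relating different nice bordered Heegaard diagrams by lifting the $\F_2$ chain homotopy equivalences. The paper does \emph{not} do this---it is stated as an open conjecture (Conjecture~2), and Theorems~\ref{thm:indA} and \ref{thm:indD} make no such claim. The ``choices'' in Theorem~\ref{thm:ind} are only the auxiliary sign/ordering data for a fixed diagram $\HD$, not the diagram itself.

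On the methodological side, the paper proves existence not by an abstract acyclicity argument but by writing down explicit formulas (Definitions~\ref{def:a-formula} and \ref{def:d-formula}): on internal flows one pulls back the closed sign assignment of \cite{hfz} via an injection $f$ into $\mathcal F_{n+1}$, and on bordered flows one uses a simple closed-form expression in the permutation sign and sign profile of the starting generator. Verifying the axioms is then a direct case analysis.
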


We expect that these structures are invariant under the choice of nice diagram.

\begin{ourconjecture}
The structures  $\cfahat(\HD; \Z)$ and  $\cfdhat(\HD; \Z)$ are invariants of the underlying bordered 3-manifold. 
\end{ourconjecture}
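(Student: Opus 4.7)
The plan is to prove invariance under the choice of nice bordered Heegaard diagram by adapting the strategy used for the closed combinatorial invariant $\hft$ in \cite{hfz}. The first step is to identify a finite list of elementary combinatorial moves connecting any two nice bordered Heegaard diagrams for the same bordered 3-manifold with torus boundary. Candidate moves include nice isotopies and handleslides of the attaching curves supported away from the boundary, nice stabilizations/destabilizations, and moves local to a neighborhood of the boundary that modify the arc configuration while preserving the bordered structure. A prerequisite here is a bordered ``nice connectedness'' result, analogous to Sarkar--Wang, asserting that any two such diagrams are related by a sequence of these moves.

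For each elementary move, the second step is to construct explicit type $A$ and type $D$ homotopy equivalences with $\Z$ coefficients. In the closed setting, the analogous maps come from counts of pseudo-holomorphic triangles, later refined by a combinatorial sign assignment. In our setting, one would instead build the maps out of formal flow data for bordered triangles and higher polygons compatible with $\mathcal A_{\Z}(\bdy \HD)$, and then verify that the bordered sign assignments from this paper extend coherently across the move. The uniqueness statement already established for bordered sign assignments should pin down the signs of these equivalence maps up to an overall ambiguity that is absorbed by chain homotopy, so the main task is producing the formal maps themselves and checking the $\mathcal{A}_\infty$ (resp.\ type $D$) relations hold over $\Z$.

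A complementary strategy, useful both as a sanity check and as a potential reduction, is to exploit the pairing theorem. Since $\cfahat(\HD_1; \Z) \widetilde{\otimes} \cfdhat(\HD_2; \Z)$ should recover $\hft$ of the glued closed 3-manifold with $\Z$ coefficients, and $\hft$ is an invariant by \cite{hfz}, invariance of one bordered structure can in principle be detected by a sufficiently rich family of test fillings (solid tori with varying slopes, or thickened torus pieces). Making this argument rigorous first requires a $\Z$-coefficient pairing theorem at the combinatorial level, which is itself a nontrivial intermediate goal but would also establish the equivalence of the paired invariants with the closed combinatorial theory.

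The hard part will be stabilization invariance. A nice stabilization typically produces a diagram that is no longer nice, and restoring niceness via Sarkar--Wang finger moves changes the generator set dramatically, so the required chain homotopy equivalences are inherently global rather than local. Tracking signs through a long sequence of finger moves, while maintaining compatibility with the torus boundary algebra and verifying that no cumulative sign error accrues, is the most delicate step; it will likely require a careful study of how bordered sign assignments transform under finger moves, perhaps via an inductive normalization argument that reduces to a small number of local model computations.
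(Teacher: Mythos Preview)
The statement you are attempting to prove is labeled in the paper as a \emph{conjecture}, not a theorem: the authors explicitly write ``We expect that these structures are invariant under the choice of nice diagram'' and then state the conjecture without proof. So there is no proof in the paper to compare against, and the paper makes no claim to have established this result.

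What you have written is not a proof but a research outline. Several of the ingredients you rely on are themselves open or at least unestablished in this setting: a bordered analogue of the Sarkar--Wang connectedness theorem for nice diagrams with torus boundary, an extension of bordered sign assignments to the higher polygons needed for the homotopy equivalences, and a verification that these signs are coherent across each elementary move. You correctly flag stabilization as the hard case, but the other moves are not routine either once one works over $\Z$; the paper's machinery only supplies sign assignments for bigons and rectangles, not for the triangle counts that would define the comparison maps. Your alternative strategy via the pairing theorem and test fillings is also only heuristic: knowing that $\cfahat(\HD_1;\Z)\boxtimes\cfdhat(\HD_2;\Z)$ has invariant homology for every filling $\HD_2$ does not by itself recover $\cfahat(\HD_1;\Z)$ up to homotopy equivalence without a further rigidity or faithfulness argument that you have not supplied. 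In short, this is a plausible plan of attack on an open problem, but none of its steps has been carried out, and the paper does not claim otherwise.
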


We  show that these bordered structures recover the combinatorial theory  $\hft$ over $\Z$ from \cite{hfz}. To state our pairing result, we remark that the bordered sign assignments used to define $\cfahat(\HD; \Z)$ fall into four equivalence classes, and so do those used to define $\cfdhat(\HD; \Z)$. We require a certain \emph{compatibility} property in order to pair sign assignments, and show that for any type $A$ equivalence class, there is a compatible type $D$ equivalence class. We then have the following pairing theorem.

\begin{ourtheorem}\label{thm:pair}
Suppose $\HD_1$ and $\HD_2$ are nice bordered Heegaard diagrams for 3-manifolds with torus boundary with $\partial\HD_1=-\partial\HD_2$. Further suppose that $\cfahat(\HD_1;\Z)$ and $\cfdhat(\HD_2;\Z)$ are defined using compatible sign assignments. Let $\HD=\HD_1 \cup_{\partial\HD_1} \HD_2$. Then 
\[H_*(\cfahat(\HD_1;\Z) \boxtimes \cfdhat(\HD_2;\Z)) \cong  \widetilde{\rm{HF}}(\HD; \Z).\]
\end{ourtheorem}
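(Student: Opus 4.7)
The plan is to follow the strategy of the pairing theorem in bordered Floer homology from \cite{bfh2}, upgraded to $\Z$ coefficients via the compatibility hypothesis. I envision three steps.

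\emph{Step 1: Matching generators.} A generator of $\cfhat(\HD; \Z)$ is a tuple of intersection points, one on each $\alpha$- and $\beta$-circle. Since $\HD = \HD_1 \cup_{\partial} \HD_2$, such a tuple splits uniquely as $\x_1 \cup \x_2$, where $\x_i$ is a generator of $\HD_i$ occupying complementary $\alpha$-arcs on $\partial \HD_1 = -\partial \HD_2$. These pairs are precisely the generators of $\cfahat(\HD_1;\Z)\boxtimes\cfdhat(\HD_2;\Z)$, yielding a canonical bijection of generating sets.

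\emph{Step 2: Matching flows as unsigned counts.} Since $\HD$ is nice, $\hft(\HD;\Z)$ is computed by a count of empty embedded bigons and rectangles avoiding the basepoint. Every such domain falls into one of three types: (i) one lying entirely in the interior of $\HD_1$, (ii) one lying entirely in the interior of $\HD_2$, or (iii) an embedded rectangle crossing the gluing curve. Types (i) and (ii) are in bijection with provincial flows in $\HD_i$, contributing to $m_1$ of $\cfahat(\HD_1;\Z)$ and to the provincial part of $\delta^1$ of $\cfdhat(\HD_2;\Z)$. Each type-(iii) rectangle decomposes uniquely into two half-rectangles whose boundaries on $\partial\HD_1$ record a common Reeb chord $\rho$; these pair to give a term $m_2(\x_1,\rho)\otimes \y_2'$ matched with $\rho\otimes \y_2'$ appearing in $\delta^1(\y_2)$. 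This establishes a bijection between flows counted by $\partial$ on $\cfhat(\HD;\Z)$ and flows counted by $\partial^\boxtimes$.

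\emph{Step 3: Matching signs.} On flows of type (i) and (ii), the bordered type $A$ and type $D$ sign assignments restrict to signs on provincial flows. On a type-(iii) flow, the box tensor differential assigns the product of the $\HD_1$-half sign (with Reeb input $\rho$) and the $\HD_2$-half sign. The compatibility hypothesis is precisely the statement that this multiplicative rule defines a valid sign assignment on the glued diagram $\HD$ in the sense of \cite{hfz}. Since sign assignments on $\HD$ are unique up to coboundary, any two produce chain-isomorphic complexes, so
\[
H_*\bigl(\cfahat(\HD_1;\Z)\boxtimes\cfdhat(\HD_2;\Z)\bigr) \;\cong\; \hft(\HD;\Z).
\]

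\emph{Main obstacle.} The principal difficulty is verifying Step 3: proving that the pointwise product of a compatible pair of bordered sign assignments, when read on the closed diagram $\HD$, satisfies the square/pentagon axioms of \cite{hfz}. I would handle this by a case analysis on how the faces in each closed axiom meet the gluing curve: faces internal to one side reduce to the already-established bordered axioms for $\cfahat$ or $\cfdhat$, while faces straddling the curve reduce to products of bordered axioms on the two pieces, tied together by a common Reeb chord. The compatibility relation between the type $A$ and type $D$ sign assignments is engineered exactly so that this case-by-case reduction closes up into a single sign assignment on $\HD$.
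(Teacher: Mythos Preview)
Your Steps 1 and 2 are fine and match the paper's approach. The gap is in Step 3, and it lies in a different place from where you think.

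By Definition~\ref{def:compat}, the compatibility hypothesis \emph{already says} that the product $\overline{\STA}\otimes\overline{\STD}$ (which, incidentally, includes a Koszul sign $(-1)^{|\x_1||\phi_2|}$ that you omit) satisfies \ref{itm:s1}--\ref{itm:s3} on its domain. So your ``main obstacle'' --- verifying the axioms by a case analysis on how faces meet the gluing curve --- is not an obstacle at all; it is given by hypothesis. (That case analysis is in fact carried out in the paper, but in the proof of Theorem~\ref{thm:compatexist}, which shows compatible pairs \emph{exist}, not in the proof of the pairing theorem.)

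The real issue is this: the domain of $\overline{\STA}\otimes\overline{\STD}$ is only the proper subset $\fmnpp\subsetneq\mathcal F_{m+n}$ of formal flows whose generators split as compatible pairs. A closed sign assignment in the sense of \cite{hfz} is a function on \emph{all} of $\mathcal F_{m+n}$, and the uniqueness result you invoke is stated at that level. A function on $\fmnpp$ satisfying \ref{itm:s1}--\ref{itm:s3} is not a priori the restriction of a closed sign assignment, and there is no reason, without further argument, that two such partial functions must be gauge equivalent. So you cannot yet conclude that the box-tensor complex agrees with $\widetilde{\mathrm{CF}}(\HD;\Z)$ for \emph{any} choice of closed sign assignment.

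The paper closes this gap with Theorem~\ref{thm:ext}: any function on $\fmnpp$ satisfying \ref{itm:s1}--\ref{itm:s3} extends uniquely to a closed sign assignment of power $m+n$. The proof occupies most of Section~\ref{sec:pairing} and is nontrivial: it establishes uniqueness on $\fmnpp$ by restricting first to generators with identity sign profile, modeling these on a toroidal grid $G_{m+n}$, and showing (Proposition~\ref{prop:planar-thin}) that a certain $2$-complex built from planar thin rectangles over $\gmn'$ is simply connected --- an argument on the Cayley graph of $S_{m+n}$ restricted to permutations that respect the block structure. One then extends to empty rectangles, to nonplanar rectangles, to bigons, and finally to arbitrary sign profiles via simple flips. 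None of this is addressed by your proposal.
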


As a small application, one may hope to prove the existence of a surgery exact triangle for the integral version of the combinatorially defined theory $\widetilde{\rm{HF}}$ from \cite{hfz}. In Section~\ref{sec:tri}, we describe three nice bordered Heegaard diagrams $\HD_{\infty}$, $\HD_{-1}$, and $\HD_0$ for the three solid tori with slopes $\infty$, $-1$, and $0$, respectively.  We make the following conjecture (c.f.\ \cite[Section 11.2]{bfh2} for the proof of the $\F_2$ analogue.)

\begin{ourconjecture}\label{conj:tri}
Suppose $Y$ is a closed, oriented 3-manifold and  $K$ is a framed knot in $Y$. Let $\HD$ be a nice bordered Heegaard diagram for the infinity-framed complement $Y\setminus \mathrm{Nbhd} (K)$,  so that $\HD_{i}'\coloneqq \HD\cup \HD_{i}$ is a nice Heegaard diagram for the 3-manifold $Y_i$ obtained from $Y$ by $i$-surgery on $K$, for $i\in \{\infty, -1,0\}$.   Then we have an exact sequence 
\[\cdots \to  \widetilde{\rm{HF}}(\HD_{\infty}'; \Z)\to  \widetilde{\rm{HF}}(\HD_{-1}'; \Z)\to  \widetilde{\rm{HF}}(\HD_0'; \Z) \to \cdots\]
\end{ourconjecture}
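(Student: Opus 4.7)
The plan is to follow the strategy of \cite[Section 11.2]{bfh2}, which establishes the $\F_2$ analogue of this triangle using the bordered machinery, and to upgrade every step to $\Z$ coefficients using the sign framework developed above. By Theorem~\ref{thm:pair}, once compatible sign assignments have been chosen, for each $i\in\{\infty,-1,0\}$ we have
\[\widetilde{\rm{HF}}(\HD_{i}';\Z)\;\cong\; H_*\bigl(\cfahat(\HD;\Z) \boxtimes \cfdhat(\HD_{i};\Z)\bigr).\]
It therefore suffices to prove the analogous triangle of type $D$ structures over $\alg_{\Z}(-\bdy\HD)$: that one of $\cfdhat(\HD_\infty;\Z)$, $\cfdhat(\HD_{-1};\Z)$, $\cfdhat(\HD_0;\Z)$ is quasi-isomorphic to the mapping cone of a type $D$ morphism between the other two. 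Because $\cfahat(\HD;\Z)\boxtimes -$ commutes with mapping cones, it will then convert this cone description into a mapping cone of chain complexes, whose long exact sequence in homology is precisely the triangle claimed.

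The second step is an explicit finite computation on the three solid-torus diagrams. Each of $\HD_\infty$, $\HD_{-1}$, $\HD_0$ (described in Section~\ref{sec:tri}) has only a handful of generators, and the underlying unsigned type $D$ structures coincide with those computed in \cite[Section 11.2]{bfh2}. One reads off the $\pm 1$ coefficients of the structure maps from the chosen bordered sign assignments, writes down a candidate type $D$ morphism, say $\phi:\cfdhat(\HD_0;\Z)\to \cfdhat(\HD_\infty;\Z)$, by lifting the $\F_2$ map of \cite{bfh2} to signed structure constants, verifies the type $D$ morphism relation by a finite check, and exhibits a type $D$ homotopy equivalence between the mapping cone of $\phi$ and $\cfdhat(\HD_{-1};\Z)$. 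Because the relevant subalgebra of $\alg_\Z(-\bdy\HD)$ involved in the structure constants is finitely generated, there are only finitely many relations to check.

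The principal obstacle will be coordinating signs across the three solid-torus diagrams. As discussed preceding Theorem~\ref{thm:pair}, $\cfahat(\HD;\Z)$ is determined up to one of four equivalence classes of type $A$ sign assignments, and similarly each $\cfdhat(\HD_{i};\Z)$ depends on an equivalence class of type $D$ sign assignments; for Theorem~\ref{thm:pair} to apply simultaneously to all three terms in the triangle, one must choose the type $D$ classes so that each is compatible with the fixed type $A$ class on $\HD$. One must then verify that these forced choices are consistent with the mapping cone identifications of the previous step. This sign-coordination step has no $\F_2$ analogue and is precisely the nontrivial content of the conjecture over $\Z$. If the forced choices are not simultaneously coherent with the cone construction, the triangle should still hold with suitable connecting maps, but an overall sign twist would need to be identified explicitly, and its algebraic form would need to be shown not to obstruct exactness.
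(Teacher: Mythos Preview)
The statement you are addressing is a \emph{conjecture}, not a theorem, and the paper does not prove it. In fact, Section~\ref{sec:tri} carries out exactly the computation you propose and shows that the direct lift of the \cite[Section 11.2]{bfh2} argument \emph{fails} over $\Z$.

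Concretely, the paper writes down the type $D$ structures $\cfdhat(\HD_\infty;\Z)$, $\cfdhat(\HD_{-1};\Z)$, $\cfdhat(\HD_0;\Z)$, simplifies the outer two by homotopy equivalence, and then considers the signed lifts $\phi$ and $\psi$ of the maps $\overline{\phi}$ and $\overline{\psi}$ from \cite{bfh2}. Requiring $\phi$ and $\psi$ to be type $D$ homomorphisms forces $\prod_{i=1}^8 S_i = (-1)^{|\rho_2|}$, while requiring the sequence $0\to N_\infty\to N_{-1}\to N_0\to 0$ to be short exact forces $\prod_{i=1}^8 S_i = (-1)^{|\rho_2|+1}$. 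These are incompatible, so no choice of signs on the \cite{bfh2} maps yields both homomorphisms and exactness. Your ``finite check'' would therefore not succeed: the very relations you intend to verify are inconsistent.

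Your final paragraph correctly anticipates that sign coordination is the crux, but the paper shows this is not merely a matter of identifying a twist; the obstruction is genuine for this particular pair of maps. Proving the conjecture would require either different connecting morphisms than the lifts of $\overline{\phi}$ and $\overline{\psi}$, or a different algebraic mechanism altogether. The paper explicitly leaves this open.
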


However, using Theorem~\ref{thm:pair} to prove this conjecture in a way analogous to \cite[Section 11.2]{bfh2} does not quite work as hoped. We discuss this at the end of the paper.

\subsection*{Organization} In Section~\ref{sec:background}, we review the necessary background, including $\Ainf$ structures with signs, bordered Floer homology with characteristic two, and sign assignments for closed nice diagrams. In Section~\ref{sec:alg}, we extend the algebra associated to the torus to an algebra over $\Z$. In Section~\ref{sec:bord-sa}, we generalize the notions of formal flows, generators, and sign assignments to the bordered setting. In Section~\ref{sec:cfa}, we discuss the existence and uniqueness of sign assignments of ``type $A$", and define the type $A$ structure $\cfahat$ with signs. In Section~\ref{sec:cfd} we carry out the analogous work to define $\cfdhat$ with signs, and complete the proof of Theorem~\ref{thm:ind}. In Section~\ref{sec:pairing} we prove Theorem~\ref{thm:pair}. Finally, in Section~\ref{sec:tri}, we include a short discussion related to Conjecture~\ref{conj:tri}.

\subsection*{Acknowledgments} We thank Robert Lipshitz for suggesting this problem, as well as  Paolo Ghiggini, Adam Levine, and Peter Ozsv\'ath for helpful conversations. I.P.\ received support from NSF grant DMS-1711100.


\section{Background}\label{sec:background}

\subsection{$\Ainf$ structures over $\Z$.}

Since definitions for the relevant algebraic structures appear only over $\Z/2$ in the bordered Floer literature, 
we begin our background discussion by reminding the reader the more general setup over $\Z$. Our summary closely follows \cite[Section 12]{osz-bord2}. Fix a ring $R$, not necessarily of characteristic two. In the later sections, $R$ will be the ring $\Z^2$ of idempotents of the torus algebra $\alg_P$ defined in Section~\ref{sec:alg}. For the discussion below, we will assume the spaces are graded by $\Z/2$. There are, of course, more general definitions in the literature.

\begin{defn}
An $\Ainf$-algebra $\alg$ is a $\Z/2$-graded $R$-bimodule $A$, equipped with degree 0 $R$-linear multiplication maps for $i\geq1$
\[\mu_i: A^{\otimes i} \to A[2-i] \]
that satisfy the following conditions for $n\geq1$
\[\sum_{r+s+t=n} (-1)^{r+st} \mu_{r+1+t}(\text{Id}^{\otimes r} \otimes \mu_s \otimes \text{Id}^{\otimes t})=0.\]
We say that such an $\Ainf$-algebra is \emph{operationally bounded} if $\mu_i=0$ for sufficiently large $i$.
\end{defn}

\begin{note}
When a tensor product of graded maps is applied to specific elements, signs may appear based on the gradings of the maps and the elements themselves. In general, if we have $\Z/2$-graded abelian groups $G,G',H,H'$ and $\Z/2$-graded homomorphisms $g:G \to G'$ and $h:H \to H'$, then their tensor product $g \otimes h: G \otimes H \to G' \otimes H'$ is defined as
\[(g \otimes h)(x \otimes y)=(-1)^{|h||x|}(g(x) \otimes h(y)),\]
if $x$ is homogeneous of degree $|x|$, and $|h|$ denotes the degree of $h$.
\end{note}

\begin{note}
The algebras that arise in bordered Heegaard Floer homology are all just differential graded algebras (DGAs), i.e. for $i \geq 3$ we have that $\mu_i=0$. This also implies that they are operationally bounded. For the remainder of this section, we assume that $\alg$ is a DGA. We also assume that $\alg$ is \emph{unital}, i.e.\ there exists a unit $I_A\in A$ such that  $\mu_2(a,I_A) = a = \mu_2(I_A, a)$ for all $a\in A$.
\end{note}

Given these two notes, our only (non-trivial) $\Ainf$-algebra relations are
\begin{align*}
d_A^2(a)&=0  \\ 
d_A(a \cdot b)&=d_A(a) \cdot b + (-1)^{|a|}a \cdot d_A(b) \\
a\cdot (b\cdot c) &= (a\cdot b) \cdot c
\end{align*}
for all $a,b,c \in A$. Note we have used the more common notation of $d_A$ for $\mu_1$ and $\cdot$ for $\mu_2$.

\begin{defn}
A right type $A$ structure $\mathcal{M}$ over a DGA $\alg$ is a $\Z/2$-graded $R$-module $M$ with degree 0 $R$-linear maps for $j \geq 0$
\[m_{j+1}: M \otimes A[1]^{\otimes j} \to M[1] \]
that satisfy the following conditions for $n \geq 1$ and arbitrary homogeneous elements $x \in M$ and $a_1,\dots,a_{n-1} \in A$:
\begin{equation}\label{eqn:Arelns}
\begin{split}
0 =  &\sum_{i=0}^{n-1} (-1)^{i(n-1)} m_{i+1}(m_{n-i}(x,a_1,\dots,a_{n-i-1}),a_{n-i},\dots,a_{n-1}) \\
	&+ \sum_{i=1}^{n-2} (-1)^{i} m_{n-1}(x,a_1,\dots, a_{i-1}, \mu_2(a_i,a_{i+1}),a_{i+2},\dots,a_{n-1}) \\
	&+ (-1)^{n-1} \sum_{i=1}^{n-1} (-1)^{|x|+|a_1|+\dots+|a_{i-1}|}m_{n}(x,a_1,\dots, a_{i-1}, \mu_1(a_i),a_{i+1},\dots,a_{n-1})
\end{split}
\end{equation}
We say that $\mathcal{M}_{\alg, \Z}$ is \emph{bounded} if $m_i=0$ for sufficiently large $i$. \footnote{We caution the reader comparing Equation~\ref{eqn:Arelns} with \cite[Equation (12.7)]{osz-bord2}  that there is a typo in \cite[Equation (12.7)]{osz-bord2} -- the first occurrence of $(-1)$ should be raised to the power $i(n-i)$ instead of the power $i$.}
\end{defn}

\begin{defn}
A left type $D$ structure  $\mathcal{N}$ over a DGA $\alg$ is a $\Z/2$-graded $R$-module $N$ equipped with a degree 0 $R$-linear map 
\[\delta^1: N \to (A \otimes N) [1] \]
with the condition that
\[0=(\mu_2 \otimes \id_N) \circ (\id_A \otimes \delta^1) \circ \delta^1 + (\mu_1 \otimes \id_N) \circ \delta^1. \]
Let $\delta^k$ be defined inductively as $\delta^0=\id_N$ and $\delta^j=(\id_{A^{\otimes j-1}} \otimes \delta^1)\circ \delta^{j-1}$. We say $\mathcal{N}$ is \emph{bounded} if $\delta^k=0$ for sufficiently large $k$.
\end{defn}

Assuming that at least one of the two structures is bounded, we can pair a type $A$ structure $\mathcal{M}$ and a type $D$ structure $\mathcal{N}$  using the $\alg_{\infty}$ box tensor product in the following way. Define a complex $\mathcal{M}\boxtimes \mathcal{N}$, whose underlying space is $\mathcal{M}\otimes_{R} \mathcal{N}$, and whose differential is given by 
\[\partial^{\boxtimes}(x \otimes y):= \displaystyle\sum_{k=0}^{\infty} (m_{k+1} \otimes \id_N)\circ (\id_M \otimes \delta^k)(x\otimes y). \]
Note that this sum is finite, since one of these structures is assumed to be bounded.

In this paper we define type $A$ and $D$ structures for a particular kind of bordered Heegaard diagram called a \emph{nice} diagram. The type $A$ structure associated to a nice diagram
has  no higher multiplication maps, i.e.\ $m_i$ for $i \geq 3$ is identically zero; note that this implies that the type $A$ structure is bounded. In this case, the type $A$ relations become
\begin{align*}
d_M^2(m)&=0 \\
d_M(m \cdot a)&=d_M(m) \cdot a + (-1)^{|m|}(m \cdot d_A(a))\\
m\cdot (a \cdot b) &= (m\cdot a) \cdot b
\end{align*}
for all homogeneous $m \in M$ and $a, b \in A$, where we have once again used the more common notations of $d_M, d_A$, and $\cdot$, instead of $m_1, \mu_1$, and $m_2$ respectively.
The type $D$ structure associated to a nice diagram is also bounded; this follows directly from \cite[Lemma 6.5]{bfh2}, which says that admissible diagrams  result in bounded type $D$ structures, and \cite[Lemma 8.3]{bfh2}, which says that nice diagrams are admissible, since the  finiteness argument in the proof of \cite[Lemma 6.5]{bfh2} is valid if one counts holomorphic disks with signs as well.

In the case of nice diagrams with torus boundary, the box tensor differential becomes
\[\partial^{\boxtimes}(x \otimes y)= m_1(x) \otimes y + (-1)^{|x|}\sum_{j}m_2(x,a_{j}) \otimes y_j,\]
where $x$ is homogeneous, and $a_j$ are the algebra elements that appear if we write out $\delta^1(y)$ in terms of generators as $\delta^1(y) = \displaystyle\sum_j a_j \otimes y_j$.

\subsection{Bordered Heegaard Floer homology over $\F_2$}

Bordered Floer homology is an extension of Heegaard Floer homology to manifolds with boundary \cite{bfh2}. To a parametrized surface $F$, one associates a differential algebra $A(F)$ over the ground ring $\F_2$, and to a manifold whose boundary is identified with $F$, a right type $A$ structure $\cfahat(Y)$ over $A(F)$, or a left type $D$ structure over $A(-F)$. These structures are invariants of the manifold up to homotopy equivalence, and their ``box tensor product" recovers Heegaard Floer homology. The structures are defined using bordered Heegaard diagrams. When the diagrams are ``nice", i.e. all regions away from the basepoint are bigons and rectangles, the differential is combinatorial, given by counting empty embedded bigons and rectangles with only convex corners.

\subsubsection{The torus algebra over $\F_2$}\label{sssec:algz2}

Recall the following general definition from \cite[Section 3.2]{bfh2}:
\begin{defn}\label{def:pmc}
A  \emph{pointed matched circle} is a quadruple  $\ZZ = (Z, {\bf a}, M, z)$ consisting of
\begin{itemize}
\item[-]  an oriented circle $Z$ with a basepoint $z$
\item[-] a set  of $4k$ points ${\bf a}=\{a_1, \ldots, a_{4k}\}$ on $Z\setminus z$ 
\item[-] a matching $M: {\bf a}\to [2k]$ (where $[2k] = \{1, \ldots, 2k\}$) such that surgery along the matched pairs yields a single circle.
\end{itemize}
\end{defn}

A pointed matched circle $\ZZ$ as above specifies a surface $F(\ZZ)$ of genus $k$. We omit the discussion of the construction, and refer the reader to \cite[Section 3.2]{bfh2}. We  write $-\ZZ$ to denote the pointed matched circle with the oppositely oriented circle $-Z$ and otherwise the same data.

Given a pointed matched circle $\ZZ$, one can define an algebra $\alg(\ZZ)$ in terms of Reeb chords in $(Z, {\bf a})$. We omit the general description, and focus on the torus case ($k=1$) below.

There is a unique pointed matched circle $\ZZ$ that represents the surface of genus $1$. The corresponding algebra $\alg(\ZZ)$ has three non-trivial summands. The interesting one, and the only one that acts non-trivially on modules coming from Heegaard diagrams, is the summand $\alg$, denoted $\alg(\ZZ,0)$ in \cite{bfh2}, defined as follows.

The algebra $\alg$ is generated over $\F_2$ by two idempotents denoted $\iota_1$ and $\iota_2$, and six non-trivial elements denoted $\rho_1$, $\rho_2$, $\rho_3$, $\rho_{12}$, $\rho_{23}$, and $\rho_{123}$. The differential is zero,  the non-zero products are
\[\rho_1\rho_2 = \rho_{12}\qquad \rho_2\rho_3=\rho_{23} \qquad \rho_1\rho_{23} = \rho_{123} \qquad \rho_{12}\rho_3 = \rho_{123}\]
and the compatibility with the idempotents is given by 
\begin{align*}
\rho_1 &= \iota_1 \rho_1\iota_2 \quad &\rho_2 &= \iota_2 \rho_2\iota_1 \quad &\rho_3 &= \iota_1 \rho_3\iota_2\\
\rho_{12} &= \iota_1 \rho_{12}\iota_1 \quad &\rho_{23} &= \iota_2 \rho_{23}\iota_2 \quad &\rho_{123} &= \iota_1 \rho_{123}\iota_2.
\end{align*}
Observe that $\iota_1$ and $\iota_2$ generate a subalgebra of $\alg$, which we denote $\mathcal I$; in \cite{bfh2}, this subalgebra is denoted $\mathcal I(\ZZ,0)$.

We now provide a graphical description of $\alg$. With the notation $\ZZ = (Z, {\bf a}, M, z)$  in mind, write ${\bf a} = \{a_1,a_2,a_3,a_4\}$, with points indexed in order of appearance along $Z\setminus z$, and write $M(a_1)=M(a_3)=1, M(a_2)=M(a_4)=2$. The six non-trivial elements of $\alg$ correspond to the six oriented subarcs of $Z\setminus z$ with endpoints in ${\bf a}$, as follows.  When examining $Z \setminus \mathbf{a}$, label the component containing $z$ with 0, then label the remaining components in order (induced by the orientation of $Z$) with 1, 2, and 3. Then the oriented subarc of $Z\setminus z$ that traverses region 1 corresponds to  the algebra element $\rho_1$. Similarly the subarc that traverses both region 2 and 3 corresponds to  the algebra element $\rho_{23}$, and so on.
 The two idempotents $\iota_1$ and $\iota_2$  are given by the pairs $M^{-1}(1) = \{a_1, a_3\}$ and $M^{-1} (2)= \{a_2, a_4\}$, respectively. See  Figure~\ref{fig:alg-def}.

\begin{figure}[h]
\centering
  \labellist
 	\pinlabel $\iota_1$ at 433 569
	\pinlabel $\iota_1$ at 433 638
	\pinlabel $\rho_1$ at 22 583
	\pinlabel $\rho_{2}$ at 90 620
	\pinlabel $\rho_{3}$ at 158 657
	\pinlabel $\rho_{12}$ at 226 602
	\pinlabel $\rho_{23}$ at 294 638
	\pinlabel $\rho_{123}$ at 362 620
	\pinlabel $\iota_2$ at 501 602
	\pinlabel $\iota_2$ at 501 674
		\endlabellist
 \includegraphics[scale=.7]{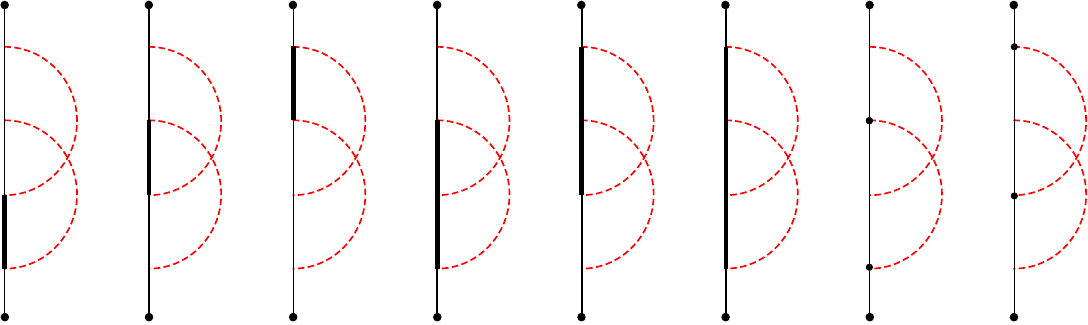}
       \caption{The arcs and pairs of points on $\ZZ$ associated to the generators of $\alg = \alg(\ZZ,0)$. In each of the eight diagrams, the top and bottom point are identified, and represent the basepoint $z$. The circle $Z$ is oriented from bottom to top.}\label{fig:alg-def}
\end{figure}

The multiplication of two non-trivial generators corresponds to concatenation of the corresponding arcs, which only gives a nontrivial result if the endpoint of the first arc is the same as the starting point of the second.   Left (resp.~right) compatibility with an idempotent encodes whether the starting point (resp.~endpoint) of the subarc contains a point of that idempotent; in other words, for a non-trivial element with starting point $a_i$ and ending point $a_j$, compatibility on the left with an idempotent is determined by the value of $M(a_i)$, and compatibility on the right by $M(a_j)$.

\subsubsection{Bordered Heegaard diagrams}\label{sssec:bhd}
Here we lay out the requisite background for bordered Heegaard diagrams and their associated type $A$ and type $D$ structures over $\F_2$. Note the definition of these kinds of structures over $\F_2$ has the same definition as those over $\mathbb{Z}$ if one ignores the signs. For more details we refer the reader to \cite{bfh2}.
\begin{defn}
A \emph{bordered Heegaard diagram} is a tuple $\HD=(\Sigma,\alphas,\betas,z)$ where
\begin{itemize}
\item[-] $\Sigma$ is a compact, oriented surface with one boundary component of genus $g$;
\item[-] $\betas=\{\beta_1,\dots,\beta_g\}$ is a $g$-tuple of pairwise disjoint circles in the interior of $\Sigma$;
\item[-] $\alphas=\{\alpha_1^c,\dots,\alpha_{g-k}^c,\alpha_1^a,\dots,\alpha_{2k}^a\}$ is a $(g+k)$-tuple of pairwise disjoint curves consisting of $g-k$ circles $\alphas^c=\{\alpha_1^c,\dots,\alpha_{g-k}^c\}$ in the interior of $\Sigma$ and $2k$ arcs $\alphas^a=\{\alpha_1^a,\dots,\alpha_{2k}^a\}$ properly embedded on $\Sigma$ (so $\partial\alpha_i^a \in \partial\Sigma$ for all $i$);
\item[-] $z$ is a point on $\partial\Sigma \setminus (\alphas \cap \partial\Sigma)$.
\end{itemize}
\end{defn}

There is a pointed matched circle $\ZZ=(Z,\mathbf{a},M)$ naturally associated to a bordered Heegaard diagram $\HD$ with $Z=\partial\Sigma$, $\mathbf{a}=\alphas \cap \partial\Sigma$, and the matching $M$ given by $M(p)=i$ if and only if $p \in \alpha_i^a \cap \partial\Sigma$. We will sometimes write $\ZZ$ as $\partial\HD$ in this context.

The construction to recover the $3$-manifold with boundary associated to a bordered Heegaard diagram $\HD=(\Sigma,\alphas,\betas,z)$ with $\ZZ=\partial\HD$ goes as follows. Begin by taking $[-\epsilon, 0] \times Z$ to be a closed collar neighborhood of $\partial\Sigma$ such that $0 \times Z$ is identified with $\partial\Sigma$, and also take a tubular neighborhood $Z \times [0,1]$ of $Z$ in $F(\ZZ)$. Now glue $\Sigma \times [0,1]$ to $[-\epsilon,0] \times F(\ZZ)$ by identifying $([-\epsilon,0] \times Z) \times [0,1] \subset \Sigma \times [0,1]$ with $[-\epsilon,0] \times (Z \times [0,1]) \subset [-\epsilon,0] \times F(\ZZ)$. Denote the result of this gluing by $Y_0$.

Next, attach a 3-dimensional 2-handle to each $\beta_i \times 1 \subset \Sigma \times [0,1] \subset Y_0$ and $\alpha_i^c \times 0 \subset \Sigma \times [0,1] \subset Y_0$. We will denote the resulting manifold by $Y_1$. Note we now have three boundary components: a sphere $S^2$ meeting $\Sigma \times 1$, a surface $\Sigma'$ of genus $2k$ meeting $\Sigma \times 0$, and our desired final boundary component given by a surface identified with $0 \times F(\ZZ) \subset [-\epsilon,0] \times F(\ZZ)$. 

Glue a 3-ball to the $S^2$ boundary component, and join each $\alpha_i^a \times 0 \subset \Sigma'$ to the co-core of the corresponding handle in $-\epsilon \times F(\ZZ)$ to form a closed curve. Attach a 3-dimensional 2-handle to each of these new circles, and denote the resulting manifold by $Y_2$. Then $Y_2$ still has a boundary component identified with $F(\ZZ)$, and a $S^2$ boundary component meeting $\Sigma \times 0 \subset Y_0$. Glue a 3-ball to this $S^2$ component and the resulting manifold $Y$ is our desired 3-manifold with boundary identified with $F(\ZZ)$. We will occasionally use $\phi$ to notate the identification of $F(\ZZ)$ with $\bdy Y$ and write $(Y, \ZZ, \phi)$. 

For the purposes of this paper, we restrict the rest of the background discussion to the case of manifolds with torus boundary. We will further restrict to a special  class of bordered Heegaard diagrams, called \emph{nice diagrams}. Nice diagrams are diagrams where each region not containing the basepoint $z$ is either a rectangle or a bigon. As a result, the structure maps for the associated type $A$ or type $D$ structure are encoded in the combinatorics of the diagram. One can show that any bordered 3-manifold admits a nice diagram. Let $\HD$ be a nice bordered Heegaard diagram for a bordered  manifold $Y$ with torus boundary, and let $\partial\HD$ denote the pointed matched circle associated to $\HD$. We define the  generators for $\HD$  to be the subsets of $\alphas \cap \betas$ with:
\begin{itemize}
\item[-] $g$ total elements
\item[-] exactly one element on each closed circle $\beta_i$ and exactly one on each $\alpha_i^c$
\item[-] at most one element on each arc $\alpha_i^a$.
\end{itemize}
We denote the set of generators for $\HD$ by $\mathcal{G}(\HD)$.

The authors of \cite{bfh2} associate to each generator a $\textrm{spin}^c$ structure of the manifold $Y$, and we will write $\mathcal{G}(\HD,\mathfrak{s})$ to denote the set of generators with a given associated $\textrm{spin}^c$ structure $\mathfrak{s} \in \textrm{Spin}^c(Y)$. Let $X(\HD)$ and $X(\HD,\mathfrak{s})$ be the $\mathbb{F}_2$-vector spaces spanned by $\mathcal{G}(\HD)$ and $\mathcal{G}(\HD,\mathfrak{s})$ respectively (i.e. $X(\HD)=\bigoplus_{\mathfrak{s} \in \textrm{Spin}^c(Y)} X(\HD,\mathfrak{s})$). Let $o(\x)\in \{1,2\}$ be the index of the $\alpha$-arc occupied by $\x$ and $\overline{o(\x)}$ be the index of the unoccupied arc, i.e.\ the complement of $o(\x)$ in $\{1,2\}$. We assume the $\alpha$-arcs are ordered as they appear along $\bdy \HD$ starting from $z$, so that after identifying $\bdy \HD$ (or similarly $-\bdy \HD$) with $\ZZ$, two points $p,q$ are the endpoints of $\alpha_i$ if and only if they are matched in $\ZZ$ as $M(p)=i = M(q)$.
 We are now ready to define a right type $A$ structure $\cfahat(\HD)$ over $\alg(\bdy \HD)$ and a left type $D$ structure $\cfdhat(\HD)$ over $-\bdy\HD$.

Given a diagram $\HD$, if we identify $\bdy \HD$ with $\ZZ$ we can define a right type $A$ structure $\cfahat(\HD,\mathfrak{s})$ over $\alg$ as follows. 
Let $\cfahat(\HD,\mathfrak{s})$ be the module generated over $\mathbb{F}_2$ by $\mathcal{G}(\HD,\mathfrak{s})$. 

Define a right action of $\mathcal{I}$ on $\cfahat(\HD,\mathfrak{s})$ by 
\[\x\cdot \iota_t = \begin{cases}
\x \quad \textrm{ if } t = o(\x) \\
0 \quad \textrm{otherwise.}
\end{cases}
\]

It remains to define the $\alg_\infty$ structure maps $m_{n+1}$. Because we are working in the context of a nice diagram, $m_{n+1}$ is trivial for $n \geq 2$. The map $m_1: \cfahat(\HD,\mathfrak{s}) \to \cfahat(\HD,\mathfrak{s})$ counts empty rectangles and bigons in the interior of the surface of $\HD$. The map $m_2: \cfahat(\HD,\mathfrak{s}) \otimes_{\mathcal{I}} \alg \to \cfahat(\HD,\mathfrak{s})$ instead counts empty rectangles with a single edge on the boundary of $\HD$ (these are called ``half strips" in \cite[Section 8]{bfh2}). Specifically $m_2(\x, \rho_I) = \y$ if and only if there is an empty rectangle from $\x$ to $\y$ with an edge on $\bdy\HD$ representing $\rho_I$; in addition, $m_2(\x, \iota_{o(\x)}) = \x$.

Let $\cfahat(\HD)=\bigoplus_{\mathfrak{s}\in\textrm{Spin}^c(Y)}\cfahat(\HD,\mathfrak{s})$.

Given a diagram $\HD$, if we identify $-\bdy \HD$ with $\ZZ$ we can define a left type $D$ structure $\cfdhat(\HD,\mathfrak{s})$ over $\alg = \alg(-\partial\HD)$ as follows. Let $\cfdhat(\HD,\mathfrak{s})$ be the module generated over $\mathbb{F}_2$ by $\mathcal{G}(\HD,\mathfrak{s})$. 

Define a left action of $\mathcal{I}$ on $\cfdhat(\HD,\mathfrak{s})$ by 
\[\iota_t \cdot \x= \begin{cases}
\x \quad \textrm{ if } t = \overline{o(\x)} \\
0 \quad \textrm{otherwise.}
\end{cases}
\]

It remains to define the type $D$ structure map $\delta^1:\cfdhat(\HD,\mathfrak{s}) \to \alg \otimes_{\mathcal{I}} \cfdhat(\HD,\mathfrak{s})$. Given $\x, \y\in \mathcal{G}(\HD,\mathfrak{s})$, let $\pi_2^{\mathrm{int}}(\x,\y)$ be the set of empty bigons and rectangles from $\x$ to $\y$ in the interior of the surface of $\HD$, and let $\pi_2^{\bdy}(\x,\y)$ be the set of empty rectangles with a single edge on the boundary of $\HD$. The structure map is defined on generators as  
\[\delta^1(\x):=\displaystyle\sum_{\y \in \mathcal{G}(\HD,\mathfrak{s})}\displaystyle\sum_{\phi \in \pi_2^{\mathrm{int}}(\x,\y)}\iota_{\overline{o(\y)}} \otimes \y + \sum_{\y \in \mathcal{G}(\HD,\mathfrak{s})}\displaystyle\sum_{\phi \in \pi_2^{\bdy}(\x,\y)}\rho^{\phi} \otimes \y,\]
where $\rho^{\phi}$ is the algebra element corresponding to the edge of $\phi$ that lies on  $\bdy \HD$.

Let $\cfdhat(\HD)=\bigoplus_{\mathfrak{s}\in\textrm{Spin}^c(Y)}\cfdhat(\HD,\mathfrak{s})$.

\subsubsection{Gluing}\label{sssec:glue}

Let $\HD_1=(\Sigma_1, \alphas_1, \betas_1, z_1)$ and $\HD_2=(\Sigma_2, \alphas_2, \betas_2, z_2)$ be nice bordered Heegaard diagrams  for bordered manifolds $(Y_1, \ZZ,\phi_1)$ and $(Y_2, -\ZZ, \phi_2)$ with torus boundary, respectively. We can create a closed Heegaard diagram by gluing $\HD_1$ and $\HD_2$ along their boundaries according to the marked points of their common pointed matched circle. This diagram 
\[ \HD=\HD_1 \cup_\bdy \HD_2=(\Sigma_1 \cup_\bdy \Sigma_2, \alphas_1 \cup_\bdy \alphas_2, \betas_1 \cup \betas_2, z_1=z_2)\] is a closed Heegaard diagram for $Y_1 \cup_\bdy Y_2= Y_1 \cup_{\phi_2 \circ \phi_1^{-1}} Y_2=Y$.  Now let $\cfahat(\HD_1)$ be the left type $A$ structure associated to $\HD_1$ and $\cfdhat(\HD_2)$ be the right type $D$ structure associated to $\HD_2$. Since $\HD_1$ and $\HD_2$ are nice diagrams, these structures are bounded. As proven in \cite{bfh2}, we then have 
\[\cfhat(\HD)\cong \cfahat(\HD_1) \boxtimes \cfdhat(\HD_2) .\] More specifically, the generators of $\cfahat(\HD_1) \boxtimes \cfdhat(\HD_2)$ take the form $x:=x_1 \otimes x_2$ where $x_1$ is a generator of $\HD_1$, $x_2$ is  a generator of $\HD_2$, and $o(x_1) \neq o(x_2)$. Then from our previous discussion, we know that on a nice diagram with torus boundary, $\partial^\boxtimes(x_1 \otimes x_2)=m_1(x_1) \otimes x_2 + (-1)^{|x_1|}\sum_{j}m_2(x_1,a_{j}) \otimes y_j$ where $\delta^1(x_2)=\sum_j a_j \otimes y_j$. The first term is counting rectangles and bigons in $\HD_1$, and the pieces of the second term where $a_j=\iota_{x_2}$ are counting rectangles and bigons in $\HD_2$. The remaining terms are counting pairs of half-strips, one in each diagram, with segments on the boundary that each correspond to $a_j$. One should think of these half-strips as gluing together in the closed diagram to form a rectangle that crosses the joined boundaries. We refer the reader to \cite{bfh2} for the more general case of diagrams that aren't nice and boundaries other than the torus.

Finally, one can conclude that because these are isomorphic chain complexes, we have that \[\hfhat(Y)\cong H_\ast (\cfahat(Y_1) \boxtimes \cfdhat(Y_2)) .\]

\subsubsection{Gradings}\label{sssec:gr}
 In \cite{bfh2}, the algebra is graded by a non-abelian group $G$, and so are domains on a bordered Heegaard diagram, whereas a right (resp.~left) structure associated to a bordered Heegaard diagram is graded by left (resp.~right) cosets in $G$ of the subgroup of gradings of periodic domains. The tensor product is graded by double cosets in $G$, from which one can recover the usual differential grading on Heegaard Floer homology. 

We will not need the above gradings in this paper, but only a simpler grading by $\Z/2$. Gradings by $\Z/2$ on bordered Floer homology have appeared in \cite{dec, z2gr, hlw}. We lay out definitions here directly, specializing to the case of torus boundary, and encourage the reader to compare with earlier literature.

The discussion in \cite[Section 3]{dec} shows that a $\Z/2$-grading on the torus algebra $\alg$ that is a reduction of the $G$-grading and plays well with gradings on modules arising from Heegaard diagrams must be $1$ for both $\rho_{12}$ and $\rho_{23}$.
By inspection, one sees that there are two  possible such gradings on $\alg$, which we will denote by $\gr_1$ and $\gr_2$; see Table~\ref{tab:az-gr}. 

\begin{table}[h]
\begin{center}
  \begin{tabular}{ |    c    ||    l      |     l     |   l   | l | l | l |}
    \hline
     & $\rho_1$ & $\rho_2$ & $\rho_3$ & $\rho_{12}$ & $\rho_{23}$ & $\rho_{123}$ \\ 
    \hline
    \hline
    $\gr_1$ & $0$ & $1$  & $0$ & $1$ & $1$ & $1$ \\ \hline
    $\gr_2$ & $1$ & $0$  & $1$ & $1$ & $1$ & $0$ \\ \hline
  \end{tabular}
  \vspace{.2cm}
\end{center}
  \caption{The two $\Z/2$-grading functions $\gr_1$ and $\gr_2$ on $\alg$.}
  \label{tab:az-gr}
\end{table}

Recall that the algebra $\alg(\ZZ)$ (thought of as a bimodule over itself) can be seen as the left-right type $AA$ bimodule associated to a certain $\beta$-$\alpha$-bordered Heegaard diagram $\mathsf{AZ}(\ZZ)$; see for example \cite[Section 4]{hfmor}. Figure~\ref{fig:az-torus-oriented} shows this diagram when $\ZZ$ is the pointed matched circle for a torus. Each generator of  $\alg = \alg(\ZZ,0)$ corresponds to an intersection point of an $\alpha$-curve and a $\beta$-curve, as indicated on Figure~\ref{fig:az-torus-oriented}. For example, the idempotent $\iota_1$ ($\iota_2$ resp.) corresponds to the intersection point in $\alpha_1\cap \beta_1$ (resp.~$\alpha_2\cap \beta_2$) that lies on the diagonal ``edge" of the diagram where 1-handles are attached. The generator $\rho_1$ corresponds to the lowest intersection point in $\alpha_2\cap \beta_1$, as seen on the diagram, and so on. 
Right and left multiplications correspond to counting $\alpha$-bordered and $\beta$-bordered rectangles, respectively; we omit the complete description here.

 Order the $\alpha$-arcs in order of appearance along $\bdy(\mathsf{AZ}(\ZZ))\setminus z$, and order the $\beta$-curves in the same way.  
Choose an orientation on the $\alpha$-arcs, and orient the $\beta$-arcs identically, so that the intersection signs at points corresponding to idempotents are all positive. The sign function 
\[s(\x) = \textrm{sign}(\sigma_{\x})\prod_{i=1}^g s(x_i)\]
on generators of $\mathsf{AZ}(\ZZ)$ induces a $\Z/2$ differential grading $m$ on the algebra $\alg(\ZZ)$ given by  $s = (-1)^m$. Here, $\sigma_{\x}$ is the permutation for $\x$ coming from the induced order of the occupied arcs. When $\ZZ$ is the pointed matched circle for the torus, there are four possible orientations on the curves. When both $\alpha$-arcs are oriented pointing into (or both pointing out of) the Heegaard surface as they first appear along $\bdy(\mathsf{AZ}(\ZZ))\setminus z$, it is easy to see that $m = \gr_1$. In the other two cases, we have $m = \gr_2$. 

\begin{figure}[h]
\centering
  \labellist
 	\pinlabel $\iota_1$ at 115 20
	\pinlabel $\rho_1$ at 115 52
	\pinlabel $\rho_{12}$ at 115 82
	\pinlabel $\rho_{123}$ at 115 112
	\pinlabel $\iota_2$ at 81 51
	\pinlabel $\rho_{2}$ at 81 82
	\pinlabel $\rho_{23}$ at 81 112
	\pinlabel $\rho_{3}$ at 52 112
  	\pinlabel \textcolor{red}{$\alpha_1$} at 180 60
	\pinlabel \textcolor{blue}{$\beta_1$} at 90 187
	\pinlabel \textcolor{red}{$\alpha_2$} at 180 90
	\pinlabel \textcolor{blue}{$\beta_2$} at 60 187
		\endlabellist
 \includegraphics[scale=.7]{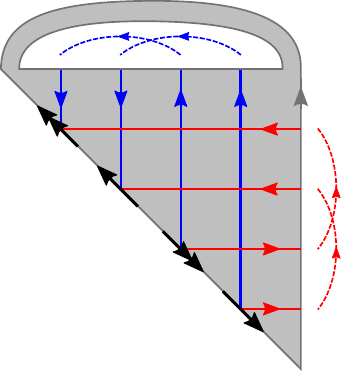} 
       \caption{The diagram $\mathsf{AZ}(\ZZ)$ when $\ZZ$ is the pointed matched circle for a torus. Arcs are further oriented, yielding a sign-induced grading $m$ that agrees with $\gr_1$. }\label{fig:az-torus-oriented}
\end{figure}

Given a bordered Heegaard diagram $\HD$ of genus $g$ for a manifold with torus boundary, order the $\alpha$-arcs so that $\alpha_1^a$ is the first arc we encounter starting at $z$ and following the orientation on $\bdy \HD$, and $\alpha_2^a$ is the second arc. Also order and orient the $\alpha$ and $\beta$ circles, and define a complete ordering on all $\alpha$-curves by $\alpha^c_1, \ldots, \alpha^c_{g-1}, \alpha_1^a,  \alpha_2^a$. Write each generator as an ordered tuple $\x = (x_1, \ldots, x_g)$ so that 
\begin{align*}
x_1 &\in \alpha^c_{1}\cap \beta_{\sigma_{\x}(1)}\\
& \hspace{7pt} \vdots\\
x_{g-1} &\in \alpha^c_{g-1}\cap \beta_{\sigma_{\x}(g-1)}\\
x_{g} &\in \alpha_{o(\x)}\cap \beta_{\sigma_{\x}(g)}
\end{align*}
for some permutation $\sigma_{\x}$.
For each $x_i$, define $s(x_i)$ to be the intersection sign of $\alphas$ and $\betas$ at $x_i$. Last, define the sign of $\x$ by 
\[s(\x) = \textrm{sign}(\sigma_{\x})\prod_{i=1}^g s(x_i).\]
We define a $\Z/2$-valued function $m$ on generators of $\HD$ by the rule $s = (-1)^m$. 

\begin{prop}
\label{prop:agrading}
If we choose the  grading on $\alg(\bdy\HD)$ corresponding to the prescribed orientations of the $\alpha$-arcs near $\bdy\HD$, the function $m$ is a $\Z/2$ differential grading on $\cfahat(\HD)$. 
\end{prop}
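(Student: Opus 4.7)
The plan is to check the two identities that encode $m$'s being a $\Z/2$ differential grading on $\cfahat(\HD)$: identity (i) asserts that for every empty interior bigon or rectangle from $\x$ to $\y$, $s(\x) = -s(\y)$, and identity (ii) asserts that for every empty half-strip from $\x$ to $\y$ whose boundary arc represents $\rho_I$, $s(\x)\cdot s(\y) = (-1)^{\gr(\rho_I)}$. The relation $m_2(\x, \iota_{o(\x)}) = \x$ is automatic because $\gr(\iota_i) = 0$ in the chosen grading.

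Identity (i) involves only the interior of $\Sigma$ and is the standard sign calculation for closed nice Heegaard diagrams. An empty embedded rectangle from $\x$ to $\y$ modifies $\sigma_{\x}$ by a single transposition, producing a factor of $-1$, while the product of the four corner intersection signs equals $+1$ by a local orientation check; an empty embedded bigon leaves $\sigma_{\x}$ unchanged but flips the product of the two corner signs. The argument is precisely as in \cite{hfz} and carries over verbatim.

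For identity (ii), let $R$ be a half-strip from $\x$ to $\y$ realizing $\rho_I$. The interior $\alpha\cap\beta$ corners $x_g, y_g$ of $R$ both lie on a single $\beta$-circle $\beta_l$, so $\sigma_{\x} = \sigma_{\y}$ and $x_i = y_i$ for $i<g$; therefore $s(\x)\cdot s(\y) = s(x_g)\cdot s(y_g)$. Since $\beta_l$ enters $R$ with a fixed tangent direction, its contribution to the two intersection signs cancels, and $s(x_g)\cdot s(y_g)$ depends only on the local picture of $R$ at $\bdy\Sigma$ together with the prescribed orientations of the $\alpha$-arcs at their endpoints in $\bdy\HD$. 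One can verify $s(x_g)\cdot s(y_g) = (-1)^{\gr(\rho_I)}$ either by a direct case analysis over the six Reeb chords and the two orientation configurations producing $\gr_1$ versus $\gr_2$, or more uniformly by pairing $R$ across $\bdy\HD$ with the half-strip in $\mathsf{AZ}(\ZZ)$ at the vertex representing $\rho_I$: this produces an interior rectangle in $\HD \cup_{\bdy} \mathsf{AZ}(\ZZ)$ to which identity (i) applies, and by the construction of $\gr$ from the sign function on $\mathsf{AZ}(\ZZ)$ preceding Figure~\ref{fig:az-torus-oriented}, the $\mathsf{AZ}$-side contribution to the sign of this rectangle is exactly $(-1)^{\gr(\rho_I)}$. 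The principal obstacle is the bookkeeping: lining up orientation, ordering, and permutation conventions between $\HD$ and $\mathsf{AZ}(\ZZ)$ so that the $\mathsf{AZ}$-side signs simultaneously match Table~\ref{tab:az-gr} for all six chords.
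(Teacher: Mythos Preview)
The paper does not include a proof of Proposition~\ref{prop:agrading}; it is stated in the background section as a known fact, with the reader directed to \cite{dec, z2gr, hlw} for $\Z/2$-gradings on bordered Floer homology. So there is no proof in the paper to compare against.

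Your argument is sound. Identity (i) is indeed the standard closed-diagram computation. For identity (ii), your reduction $s(\x)s(\y)=s(x_g)s(y_g)$ is correct, and the observation that reversing the orientation of $\beta_l$ flips both $s(x_g)$ and $s(y_g)$ justifies the claim that the product depends only on the $\alpha$-orientations near $\bdy\Sigma$. The cleanest way to finish is not the $\mathsf{AZ}$-pairing argument (which, as you note, requires delicate bookkeeping to align the ordering and permutation conventions after gluing) but a direct local computation: parametrize the half-strip so that the $\beta$-edge is vertical and the boundary edge is on the right; then $s(x_g)s(y_g)$ equals the product of the ``into the strip'' signs of the two $\alpha$-tangents, which is exactly the product of the $\pm$ labels at the two endpoints of $\rho_I$ on the signed pointed matched circle. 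The lemma immediately following Definition~\ref{def:alg-spmc} in the paper identifies this product with $(-1)^{|\rho_I|}$, completing the argument without any case-by-case check over the six chords.
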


Now, given a bordered Heegaard diagram $\HD$ of genus $g$ for a manifold with torus boundary, order the $\alpha$-arcs so that $\alpha_2^a$ is the first arc we encounter starting at $z$ and following the orientation on $\bdy \HD$, and $\alpha_1^a$ is the second arc. Also order and orient the $\alpha$ and $\beta$ circles, and define a complete ordering on all $\alpha$-curves by $\alpha_1^a, \alpha_2^a, \alpha^c_1, \ldots, \alpha^c_{g-1}$. Write each generator as an ordered tuple $\x = (x_1, \ldots, x_g)$ so that 
\begin{align*}
x_1 &\in \alpha_{o(\x)}\cap \beta_{\sigma_{\x}(1)}\\
x_2 &\in \alpha^c_{1}\cap \beta_{\sigma_{\x}(2)}\\
& \hspace{7pt} \vdots\\
x_{g} &\in \alpha^c_{g-1}\cap \beta_{\sigma_{\x}(g)}
\end{align*}
for some permutation $\sigma_{\x}$.
For each $x_i$, define $s(x_i)$ to be the intersection sign of $\alphas$ and $\betas$ at $x_i$. Last, define the sign of $\x$ by 
\[s(\x) = (-1)^{o(\x)} \textrm{sign}(\sigma_{\x})\prod_{i=1}^g s(x_i).\]
We define a $\Z/2$-valued function $m$ on generators of $\HD$ by the rule $s = (-1)^m$. 

\begin{prop}
\label{prop:dgrading}
If we choose the  grading on $\alg(-\bdy\HD)$ corresponding to the prescribed orientations of the $\alpha$-arcs near $-\bdy\HD$, the function $m$ is a $\Z/2$ differential grading on $\cfdhat(\HD)$. 
\end{prop}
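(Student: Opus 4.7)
The proof closely parallels that of Proposition~\ref{prop:agrading}. Since $\delta^1: N \to (A \otimes N)[1]$ is a degree-zero map, verifying that $m$ is a $\Z/2$ differential grading on $\cfdhat(\HD)$ amounts to checking, for every summand $a \otimes \y$ appearing in $\delta^1(\x)$, the identity
\[m(\y) + \gr(a) \equiv m(\x) + 1 \pmod 2.\]
The summands of $\delta^1(\x)$ decompose as in Section~\ref{sssec:bhd} into interior contributions $\iota_{\overline{o(\y)}} \otimes \y$, coming from empty bigons and rectangles away from $\bdy\HD$, and boundary contributions $\rho_I \otimes \y$, coming from half-strips whose edge on $\bdy\HD$ represents a Reeb chord $\rho_I$. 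A useful preliminary observation is that placing the $\alpha^a$-slot first in the tuple $(x_1,\dots,x_g)$, rather than last as in the type $A$ convention, changes $\sign(\sigma_{\x})$ by the uniform factor $(-1)^{g-1}$ whenever the occupied arc is unchanged, so interior sign computations can be imported directly from the proof of Proposition~\ref{prop:agrading}.

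For an interior bigon or rectangle, $o(\x) = o(\y)$, so the prefactor $(-1)^{o(\x)}$ cancels and $\gr(\iota_{\overline{o(\y)}}) = 0$. The identity then reduces to the standard sign-flip for empty bigons and rectangles in closed Heegaard diagrams. For a half-strip, I would analyze the six Reeb chords individually. The chords $\rho_{12}$ and $\rho_{23}$ preserve the occupied arc ($o(\x)=o(\y)$), so the prefactor cancels and the required sign flip of $\sign(\sigma_{\x})\prod_i s(x_i)$ follows from a closed-diagram-style argument applied to the rectangle, matching $\gr(\rho_{12}) = \gr(\rho_{23}) = 1$. The chords $\rho_1, \rho_2, \rho_3, \rho_{123}$ toggle the occupied arc, so the prefactor $(-1)^{o(\x)}$ contributes an extra sign. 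Combining this with the change in the boundary-intersection factor $s(x_1) \to s(y_1)$ and the change in $\sign(\sigma_{\x})$ produces $\gr(\rho_I) + 1$, exactly as dictated by the prescribed orientations of the $\alpha^a$-arcs near $-\bdy\HD$ and Table~\ref{tab:az-gr}.

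The main obstacle is the sign bookkeeping in the half-strip case: one must consistently translate the orientations of the $\alpha^a$-arcs near $-\bdy\HD$ (opposite to those near $\bdy\HD$) into the induced $\Z/2$-grading on $\alg(-\bdy\HD)$, and then verify that the resulting parity identity holds uniformly across all four arc-changing chords. An alternative, more conceptual route would be to glue $\HD$ to an auxiliary bordered diagram along $\bdy\HD$ and appeal to the pairing theorem together with additivity of $m$ under $\boxtimes$, reducing the type $D$ statement to Proposition~\ref{prop:agrading} combined with the known $\Z/2$ Maslov grading on the resulting closed diagram; however, establishing this additivity would itself require case analysis comparable to the direct approach.
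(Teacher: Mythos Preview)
The paper does not include a proof of Proposition~\ref{prop:dgrading} (nor of Proposition~\ref{prop:agrading}); both are stated and then the exposition moves on. So there is no argument in the paper to compare yours against.

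That said, your outline is the natural one: split $\delta^1(\x)$ into interior summands and half-strip summands, observe that the prefactor $(-1)^{o(\x)}$ cancels whenever $o(\x)=o(\y)$, and reduce the interior case to the standard closed-diagram sign flip. The observation that moving the $\alpha^a$-slot from last to first changes $\sign(\sigma_\x)$ by a uniform factor is correct and lets you import the interior computation verbatim from the type $A$ side.

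However, as written this is a plan, not a proof. You yourself flag the half-strip bookkeeping as ``the main obstacle'' and then stop short of doing it. Concretely, for each of the six chord types you need to track three things simultaneously: (i) whether $o(\x)=o(\y)$ or not, which governs the $(-1)^{o(\cdot)}$ prefactor; (ii) the relation between the local intersection signs $s(x_1)$ and $s(y_1)$, which is dictated by how the $\alpha^a$-orientations meet the rectangle; and (iii) the value of $\gr(\rho_I)$ under the grading on $\alg(-\bdy\HD)$ induced by those same orientations. The claim is that these three contributions always combine to give the required parity, but you have not actually verified this for any of the arc-changing chords. Until that case check is written out (it is short but orientation-sensitive, and the identification of $-\bdy\HD$ with $\ZZ$ reverses which $\alpha^a$ is ``first''), the argument is incomplete. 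The alternative gluing route you mention would work in principle but, as you note, requires its own additivity statement that is not available at this point in the paper.
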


For brevity, we sometimes write $|\x|$ for $m(\x)$.

\subsection{Combinatorial sign assignments for closed nice diagrams}

In this section, we briefly review the relevant material from \cite{hfz}.

\begin{defn}
A \emph{formal generator} $\x$ is a one-to-one correspondence $\rho$ between two $n$-element sets $\alphas$ and $\betas$ (which can be represented as a subset of the Cartesian product $\alphas \times \betas$), together with a function $\zeta$ from $\rho$ to $\{\pm1\}$. The set of all such generators will be denoted as $\mathcal{G}_n$.
\end{defn}

After fixing an ordering of the elements of $\alphas$ and $\betas$, $\alphas=\{\alpha_1,\dots,\alpha_n\}$ and  $\betas=\{\beta_1,\dots,\beta_n\}$, we can think of the correspondence $\rho$ as the permutation $\sigma \in S_n$ for which $\rho = \{(\alpha_i, \beta_{\sigma(i)}) \mid i=1, \ldots, n \}\subset \alphas\times \betas$. We can then think of $\zeta$ as the $n$-tuple $\e = (\e_1, \dots, \e_n) \in \{\pm 1\}^n$ given by  $\e_i = \zeta(\alpha_i, \beta_{\sigma(i)})$. We call $\sigma$ and $\e$ the \emph{associated permutation} and \emph{sign profile} for the formal generator, and write $\x = (\sigma, \e)$. 

Graphically, a formal generator can be thought of as $n$ disjoint crosses between oriented red and blue arcs, with the red arcs decorated with $\alpha_1, \dots, \alpha_n$ and the blue arcs with $\beta_{\sigma(1)},\dots,\beta_{\sigma(n)}$, so that $\alpha_i$ and $\beta_{\sigma(i)}$ intersect. The sign profile $\e$ is given by the sign of each cross (with the convention that the $\alpha$-curve comes first). See Figure~\ref{fig:gen} for example.

\begin{figure}[h]
\centering
  \labellist
  	\pinlabel \textcolor{red}{$\alpha_1$} at 73 745
	\pinlabel \textcolor{blue}{$\beta_{\sigma(1)}$} at 123 745
	\pinlabel \textcolor{red}{$\alpha_2$} at 162 745
	\pinlabel \textcolor{blue}{$\beta_{\sigma(2)}$} at 215 745
	\pinlabel \textcolor{red}{$\alpha_3$} at 251 745
	\pinlabel \textcolor{blue}{$\beta_{\sigma(3)}$} at 307 745
	\pinlabel \textcolor{red}{$\alpha_4$} at 341 745
	\pinlabel \textcolor{blue}{$\beta_{\sigma(4)}$} at 399 745
		\endlabellist
 \includegraphics[scale=.7]{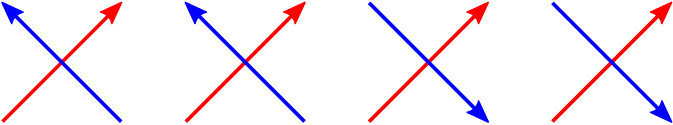} 
      \vskip .2 cm
       \caption{A generator with associated permutation $\sigma$ and sign profile $(+1,+1,-1,-1)$.}\label{fig:gen}
\end{figure}

\begin{defn}
\label{def:oss-bigon}
Fix a positive integer $n$ and sets $\alphas = \{\alpha_1,\dots,\alpha_n\}$ and $\betas = \{\beta_1,\dots,\beta_n\}$. Consider $n-1$ disjoint crosses of oriented arcs, along with another pair of oriented arcs, intersecting each other twice and disjoint from all the crosses. The complement of the last two arcs has two components (one compact and one non-compact); the $n-1$ disjoint crosses and the ends of the two arcs that intersect twice are all required to be in the non-compact component. Decorate one of the arcs in each pair of intersecting arcs with an $\alpha_i$ and the other one with a $\beta_j$ so that each element of $\alphas$ and of $\betas$ is used exactly once. Two such configurations are considered to be equivalent if there is an orientation preserving diffeomorphism of the plane  that maps one configuration into the other and respects the orientations and the decorations of the arcs. An equivalence class of such configurations is called a \emph{formal bigon}. See Figure~\ref{fig:bi} for example. 
\end{defn}

\begin{figure}[h]
\centering
\labellist
  \pinlabel $\textcolor{red}{\alpha_1}$ at 3 -2
	\pinlabel $\textcolor{blue}{\beta_2}$ at 60 -2
	\pinlabel $\textcolor{red}{\alpha_2}$ at 96 -10
	\pinlabel $\textcolor{blue}{\beta_3}$ at 142 -10
	\pinlabel $\textcolor{red}{\alpha_3}$ at 179 -2
	\pinlabel $\textcolor{blue}{\beta_1}$ at 239 -2
	\pinlabel $\textcolor{black}{xy}$ at 29 22
	\pinlabel $\textcolor{black}{x}$ at 118 -1
	\pinlabel $\textcolor{black}{y}$ at 118 83
	\pinlabel $\textcolor{black}{xy}$ at 206 22
	\endlabellist
  \includegraphics[scale=.7]{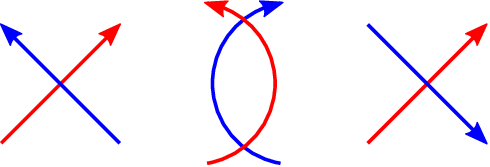} 
      \vskip .2 cm
       \caption{An example of a formal bigon. Visually, we can see this bigon goes from the generator $\x=\{(1 \ 2 \ 3), (+1, +1, -1)\}$ to the generator $\y=\{(1 \ 2 \ 3), (+1, -1, -1)\}$. See the paragraph after Definition~\ref{def:oss-flow} for a more precise definition of \emph{from} and \emph{to}.} \label{fig:bi}
\end{figure}

\begin{defn}
\label{def:oss-rect}
For a fixed positive integer $n$ and sets $\alphas = \{\alpha_1,\dots,\alpha_n\}$ and $\betas = \{\beta_1,\dots,\beta_n\}$, consider $n-2$ disjoint oriented crosses. Consider furthermore two pairs of oriented closed arcs ($a_1, b_1$) and ($a_2, b_2$) such that $a_1$ and $a_2$ (and likewise $b_1$ and $b_2$) are disjoint, while both $a_i$ intersect both $b_j$ exactly once in their interiors. One of the two components of the complement of the last four arcs is compact, and we require its interior to be disjoint from all arcs and the endpoints of the four arcs to lie in the non-compact region. Decorate one of the arcs in each pair with an $\alpha_i$ and the other one with a $\beta_j$ so that each element of $\alphas$ and of $\betas$ is used exactly once, and so that the $a_i$ arcs in the rectangle are decorated by elements of $\alphas$ while the $b_j$ arcs are decorated with elements of $\betas$. Two such configurations are considered to be equivalent if there is an orientation preserving diffeomorphism of the plane mapping one into the other, while respecting both the orientations and the decorations of the arcs. An equivalence class of such objects is called a \emph{formal rectangle}. See Figure~\ref{fig:rect} for example. 
\end{defn}

\begin{figure}[h]
\centering
\labellist
  \pinlabel $\textcolor{red}{\alpha_1}$ at 3 -6
	\pinlabel $\textcolor{blue}{\beta_2}$ at 60 -6
	\pinlabel $\textcolor{red}{\alpha_2}$ at 205 5
	\pinlabel $\textcolor{blue}{\beta_3}$ at 96 -12
	\pinlabel $\textcolor{red}{\alpha_3}$ at 205 63
	\pinlabel $\textcolor{blue}{\beta_1}$ at 186 -12
	\endlabellist
  \includegraphics[scale=.7]{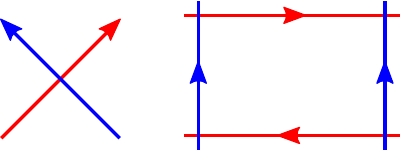} 
      \vskip .2 cm
       \caption{An example of a formal rectangle.}\label{fig:rect}
\end{figure}

\begin{defn}
\label{def:oss-flow}
A \emph{formal flow} is either a formal bigon or formal rectangle. For a given integer $n$, the set of all flows moving between elements of $\mathcal{G}_n$ will be denoted $\mathcal{F}_n$.
\end{defn}

The compact region described above for a given formal flow is called the \emph{domain} for that flow. If we wish to consider the region with its full set of data of both labels and orientations, we refer to it as a \emph{decorated domain}. A formal flow $\phi$ determines two formal generators $\x$ and $\y$, obtained by adding  to the disjoint crosses a small neighborhood of the point(s) where the oriented boundary of the domain switches from a $\beta$-arc to an $\alpha$-arc, or from an $\alpha$-arc to a $\beta$-arc, respectively. We say that $\phi$ is a flow \emph{from $\x$ to $\y$}, and write $\phi:\x\to \y$. We refer to $\x$ and $\y$ as the  \emph{starting generator}  and  \emph{ending generator}, respectively. We call the $\alpha$-indices at which $\x$ and $\y$ differ the \emph{moving coordinates}. A bigon has one moving coordinate, and the associated generators have identical permutations and sign profiles that differ exactly at that coordinate. A rectangle has two moving coordinates, the generators' permutations differ by the transposition corresponding to the moving coordinates, and their sign profiles may differ at either both or neither of the moving coordinates and agree elsewhere.

Suppose that the bigons $\phi_1$ and $\phi_2$ agree at all non-moving coordinates. Furthermore, suppose that at the moving coordinates the $\beta$-arcs both agree or both disagree with the orientation induced by the boundary of the domain, whereas one $\alpha$-arc agrees and the other disagrees with the orientation of the boundary of the domain. Graphically, this means the two domains can be glued along their respective $\alpha$-edges to obtain a disk, so that the labels and orientations of the $\alpha$-arcs match along the gluing, and the $\beta$-arcs result in a consistently oriented and labeled boundary for the disk. We say that $\phi_1$ and $\phi_2$ are a disk-like \emph{$\beta$-type boundary degeneration}. See the left diagram in Figure~\ref{fig:beta-degens}.

Similarly, suppose that the rectangles $\phi_1$ and $\phi_2$ agree at all non-moving coordinates, and their domains can be glued along their $\alpha$-arcs to obtain an annulus, so that labels and orientations match along the gluing, and also result in consistently labeled and oriented boundary components for the annulus. 
We say that $\phi_1$ and $\phi_2$ are an annular \emph{$\beta$-type boundary degeneration}. See the right diagram in Figure~\ref{fig:beta-degens}.

\begin{figure}[h]
\centering
 \includegraphics[scale=.7]{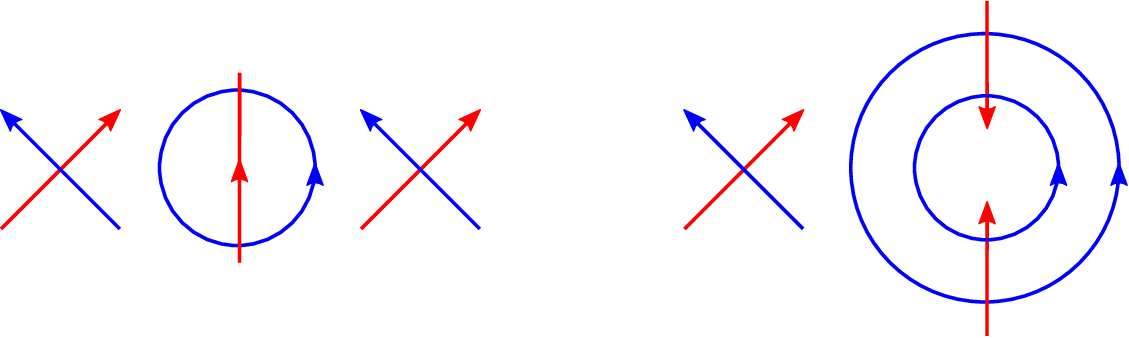} 
       \caption{Left: A disk-like $\beta$-degeneration in $\mathcal{F}_3$. Right: An annular $\beta$-degeneration in $\mathcal{F}_3$. Labels on the curves are omitted. }\label{fig:beta-degens}
\end{figure}

If in the above description the roles of $\alpha$-curves and $\beta$-curves  are interchanged, we say that $\phi_1$ and $\phi_2$ are an \emph{$\alpha$-type boundary degeneration}. 

Observe that in a boundary degeneration, the starting generator of each flow is the ending generator of the other flow.

Now consider a pair of flows $\phi_1:\x\to \y$, $\phi_2:\y\to \z$, where $\x\neq \z$. Suppose $\phi_1$ and $\phi_2$ have no moving coordinates in common. This pair uniquely determines another pair of flows $\phi_3:\x\to \y'$, $\phi_4:\y'\to \z$ by requiring that $\phi_3$ has the same starting generator as $\phi_1$ and the same domain (considered with labelings and orientations on the boundary arcs) as $\phi_2$, and $\phi_4$ has the same ending generator as $\phi_2$ and the same domain as $\phi_1$. We say that the pairs $(\phi_1,\phi_2)$ and $(\phi_3,\phi_4)$  form a \emph{square}. Graphically, this description means that we can embed both domains disjointly in the plane, and complete with crosses in the non-compact region so that all four generators and flows can be seen. 

Next suppose $\phi_1$ and $\phi_2$ share a moving coordinate. We can embed both flows simultaneously on the plane so that the domains have disjoint interiors, there are exactly $n$ red arcs and $n$ blue arcs, and the arcs' endpoints are all disjoint from the two domains. In this graphical representation, there is a unique way to decompose the union of the domains to see two other flows $\phi_3:\x\to \y'$, $\phi_4:\y'\to \z$, so that the union of their domains is the same as that for $\phi_1$ and $\phi_2$. Suppose further the union of the domains looks like one of the diagrams in Figure~\ref{fig:z-int-sq1}. In this case again we say $(\phi_1,\phi_2)$ and $(\phi_3,\phi_4)$  form a \emph{square}.

\begin{figure}[h]
\centering
 \includegraphics[scale=.7]{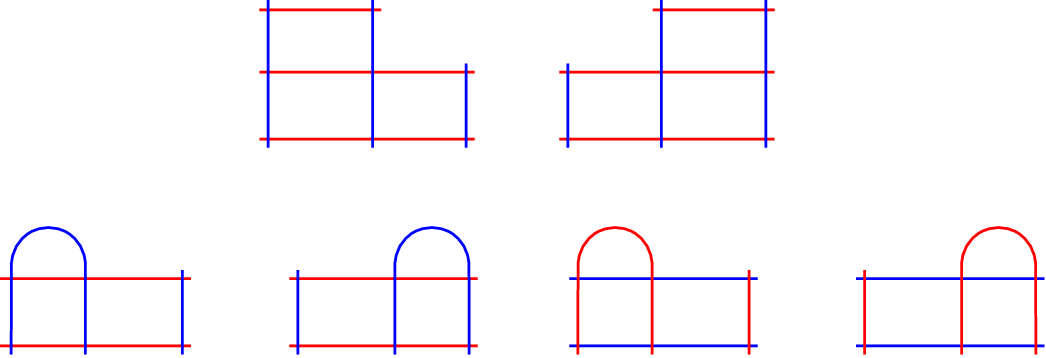} 
       \caption{Squares. Unlike in Figure~\ref{fig:beta-degens}, orientations, as well as crosses at non-moving coordinates are omitted from the diagram. The pairs of flows in the first row examples share one moving coordinate, and there are $n-3$ crosses omitted from each example. The pairs of flows in the second row examples share one  moving coordinate, and there are $n-2$ crosses omitted from each example.}\label{fig:z-int-sq1}
\end{figure}

We are now ready to recall the definition of a sign assignment.

\begin{defn}\label{def:s}
Fix an integer $n$. A \emph{sign assignment} of power $n$ is a map $S$ from the set of formal flows $\mathcal{F}_n$ to $\{\pm 1\}$ with the following properties:
\begin{itemize}
\item[\mylabel{itm:s1}{(S-1)}] \quad if $(\phi_1,\phi_2)$ is an $\alpha$-type boundary degeneration, then
\[S(\phi_1) \cdot S(\phi_2) = 1;\]

\item[\mylabel{itm:s2}{(S-2)}] \quad if $(\phi_1,\phi_2)$ is an $\beta$-type boundary degeneration, then
\[S(\phi_1) \cdot S(\phi_2) = -1;\]

\item[\mylabel{itm:s3}{(S-3)}] \quad if the two pairs $(\phi_1,\phi_2)$ and $(\phi_3,\phi_4)$ form a square, then
\[S(\phi_1) \cdot S(\phi_2) = -S(\phi_3) \cdot S(\phi_4).\]

\end{itemize}
\end{defn}

Note that (S-3) can be restated as $\Pi_{i=1}^4 S(\phi_i)= -1$.

In \cite{hfz}, the authors discuss some additional types of squares $\{(\phi_1,\phi_2),(\phi_3,\phi_4)\}$, namely, the ones given by Figure~\ref{fig:z-int-sq2}. 
\begin{figure}[h]
\centering
 \includegraphics[scale=.7]{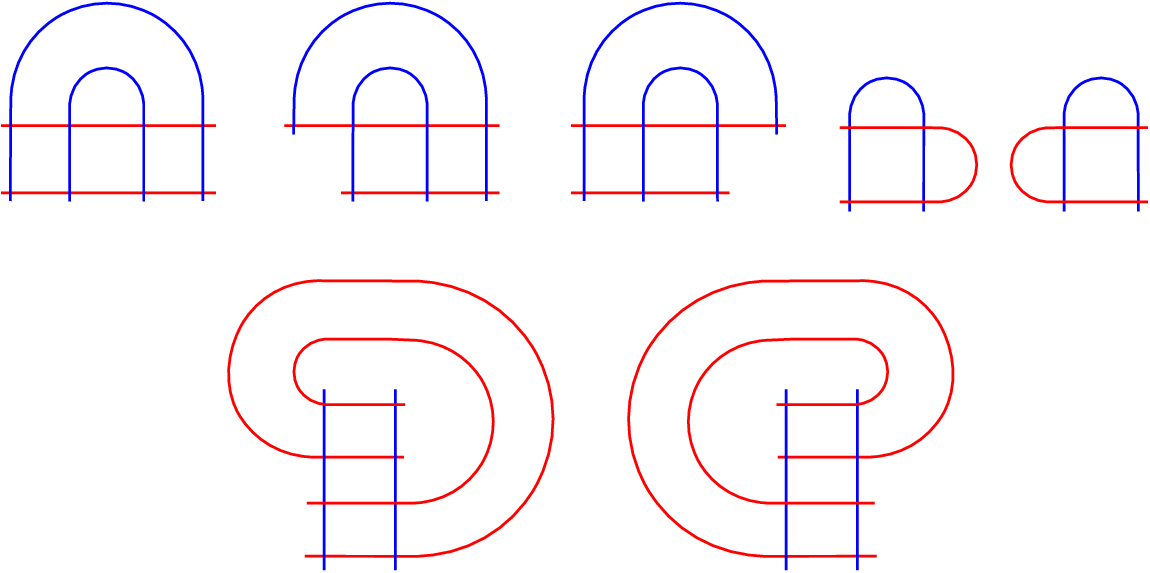} 
       \caption{Further types of squares.}\label{fig:z-int-sq2}
\end{figure}
In \cite[Lemma 2.8]{hfz}, they show that if $S$ is a sign assignment and $\{(\phi_1,\phi_2),(\phi_3,\phi_4)\}$ is one of these further types of squares, then $\Pi_{i=1}^4 S(\phi_i)= -1$.

In upcoming arguments, we will occasionally need to compare two rectangles with the same domain but different non-moving coordinates. Two such rectangles are related by a sequence of operations called ``simple flips", defined below. 

\begin{defn}\label{def:sflip}
Let $\phi: \x \to \y$ be a formal rectangle and let $i$ be a non-moving coordinate of $\phi$. Consider the new flow $\phi':\x' \to \y'$ that has the same domain as $\phi$, and such that $\x'$ and $\y'$ are obtained from $\x$ and $\y$ by flipping the sign profile at the $i^{\mathrm{th}}$ coordinate. We say that $\phi$ and $\phi'$ are related by a \emph{simple flip} and note that there exists some pair of bigons $b, b'$ such that the pairs $(b,\phi)$ and $(\phi',b')$ form a square.
\end{defn}


\section{The torus algebra $\algz$.}
\label{sec:alg}

When $\ZZ$ is the pointed matched circle for the torus, 
we extend the definition of the summand $\alg = \alg(\ZZ,0)$ of $\alg(\ZZ)$ from Section~\ref{sssec:algz2} to an algebra $\algz(\ZZ,0)$.
\begin{defn}
The (zero-summand)   \emph{$\Z$-algebra for the pointed matched circle for a torus} $\algz$ is defined as $\algz \coloneqq \Z\otimes_{\F_2}\alg(\ZZ,0)$.
\end{defn}
In other words, we again have eight generators, but over $\Z$, with the same products as for $\alg(\ZZ,0)$. 

The two functions from Table~\ref{tab:az-gr} defined on generators extend to $\Z/2$ gradings on $\algz$. Thus, there are two distinct graded algebras over $\Z$ that we will consider in this paper. To fit with the diagrammatic nature of sign assignments, we endow a pointed matched circle with additional data, to which a $\Z/2$-graded $\Z$-algebra can naturally be associated. This additional data views each pair of matched points on a pointed matched circle as the boundary of an oriented 1-manifold:

\begin{defn}\label{def:spmc}
A \emph{signed pointed matched circle} is a pointed matched circle $\ZZ = (Z, {\bf a}, M, z)$ along with the following data. For each pair of points $M^{-1}(i)$, one point is labelled  with $``-"$ and the other is labelled with ``$+$". 
\end{defn}
Given a pointed matched circle $\ZZ$, order the $2k$ pairs of points in order of appearance along $\ZZ\setminus z$. Let $P\in \{-,+\}^{2k}$ be a sign sequence. Define $\ZZ_P$ to be the signed pointed matched circle for which the first point (as encountered along $\ZZ\setminus z$) of the $i^{\mathrm{th}}$ matched pair is labelled with a $+$ if and only if   the $i^{\mathrm{th}}$ coordinate of $P$ is a $+$. For ease of notation, we will write $\ZZ_{+-}$ for $\ZZ_{(+,-)}$, and so on. 

A signed pointed matched circle $\ZZ_P$ induces an orientation on the $\alpha$- and $\beta$-arcs on $\mathsf{AZ}(\ZZ)$, as follows. Identify each boundary component of  $\mathsf{AZ}(\ZZ)$  with $\ZZ_P$, and orient each arc so it starts at a $-$ and ends at a $+$. We write $\mathsf{AZ}(\ZZ_P)$ for the resulting Heegaard diagram along with the prescribed orientations. 

When $|{\bf a}|=4$, i.e. in the torus case, there are four distinct signed pointed matched circles -- $\ZZ_{++}$, $\ZZ_{-+}$,  $\ZZ_{+-}$, and $\ZZ_{--}$; see Figure~\ref{fig:s-pmc}. Observe that the oriented diagram in Figure~\ref{fig:az-torus-oriented} represents  $\mathsf{AZ}(\ZZ_{++})$.

\begin{figure}[h]
\centering
\labellist
  	\pinlabel $\textcolor{red}{+}$ at -9 24
	\pinlabel $\textcolor{red}{+}$ at -9 59
	\pinlabel $\textcolor{red}{-}$ at -9 97
	\pinlabel $\textcolor{red}{-}$ at -9 132 
	\pinlabel $\textcolor{red}{1}$ at 50 55
	\pinlabel $\textcolor{red}{2}$ at 50 100
	\pinlabel $\textcolor{red}{-}$ at 100 24
	\pinlabel $\textcolor{red}{+}$ at 100 59
	\pinlabel $\textcolor{red}{+}$ at 100 97
	\pinlabel $\textcolor{red}{-}$ at 100 132
	\pinlabel $\textcolor{red}{1}$ at 160 55
	\pinlabel $\textcolor{red}{2}$ at 160 100
	\pinlabel $\textcolor{red}{+}$ at 215 24
	\pinlabel $\textcolor{red}{-}$ at 215 59
	\pinlabel $\textcolor{red}{-}$ at 215 97
	\pinlabel $\textcolor{red}{+}$ at 215 132 
	\pinlabel $\textcolor{red}{1}$ at 272 55
	\pinlabel $\textcolor{red}{2}$ at 272 100
	\pinlabel $\textcolor{red}{-}$ at 320 24
	\pinlabel $\textcolor{red}{-}$ at 320 59
	\pinlabel $\textcolor{red}{+}$ at 320 97
	\pinlabel $\textcolor{red}{+}$ at 320 132 
	\pinlabel $\textcolor{red}{1}$ at 385 55
	\pinlabel $\textcolor{red}{2}$ at 385 100
\endlabellist
 \includegraphics[scale=.6]{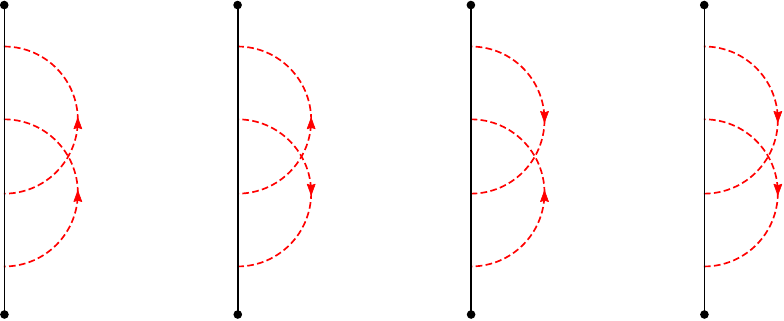} 
       \caption{The four signed pointed matched circles   $\ZZ_{++}$, $\ZZ_{-+}$,  $\ZZ_{+-}$, and $\ZZ_{--}$ (from left to right)  for the torus. Here, the top and bottom point on each arc are identified, and represent the basepoint $z$. The circle $Z$ is oriented from bottom to top. The dashed curves represent the matching data, rather than the $\alpha$-arcs from $\mathsf{AZ}(\ZZ_P)$; as in Figure~\ref{fig:az-torus-oriented}, they are oriented in a manner that would complete the $\alpha$-arcs to oriented closed circles.}\label{fig:s-pmc}
\end{figure}

\begin{defn}\label{def:alg-spmc}
Let   $\ZZ_P$ be a signed pointed matched circle for the torus. The \emph{$\Z/2$-graded $\Z$-algebra $\alg_P$} is the algebra $\algz$ with the $\Z/2$ grading associated to $\mathsf{AZ}(\ZZ_P)$ from Section~\ref{sssec:gr}.
\end{defn}

We can see the generators of $\alg_P$ on $\ZZ_P$ in the same way as in the $\F_2$ case in Section~\ref{sssec:algz2}. The signed pointed matched circle $\ZZ_P$ and the (unsigned) pointed matched circle $\ZZ$ have the same underlying data $(Z, {\bf a}, M, z)$. An oriented subarc of $Z\setminus z$ with endpoints in ${\bf a}$ corresponds to an element $\rho_I$ in $\alg_P$ if and only if it corresponds to $\rho_I$ in $\alg$. The \emph{ring of idempotents associated to $\alg_P$} is the algebra  $\mathcal I_P$ generated by the two idempotents of $\alg_P$. Note that $\mathcal I_P\cong \Z^2$.

If we consider subarcs with the signs on the endpoints inherited from the signed pointed matched circle, one can determine the gradings on the algebra elements in the following way.

\begin{lem}
The product of the inherited signs on the endpoints for a subarc $\rho_I$ is equal to $(-1)^{\mid\rho_I\mid}$.
\end{lem}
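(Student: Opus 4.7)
The plan is to verify the statement by a direct finite case analysis, since there are only six non-idempotent generators $\rho_1, \rho_2, \rho_3, \rho_{12}, \rho_{23}, \rho_{123}$ of $\alg_P$ (corresponding to the six oriented subarcs of $\ZZ\setminus z$) and only four signed pointed matched circles $\ZZ_{++}, \ZZ_{-+}, \ZZ_{+-}, \ZZ_{--}$, so the claim reduces to checking 24 equalities.

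First I would determine, for each signed pointed matched circle $\ZZ_P$, which of the two gradings $\gr_1, \gr_2$ from Table~\ref{tab:az-gr} equals the grading $m$ on $\alg_P$. By Definition~\ref{def:alg-spmc}, $m$ is inherited from $\mathsf{AZ}(\ZZ_P)$, whose $\alpha$-arcs are oriented from their $-$ endpoint to their $+$ endpoint. Combining this orientation convention with the dichotomy stated in Section~\ref{sssec:gr} (namely, both $\alpha$-arcs pointing into, or both pointing out of, the Heegaard surface at their first encounter along $\bdy\mathsf{AZ}(\ZZ)\setminus z$ yields $m = \gr_1$, while mixed orientations yield $m = \gr_2$), a short check of the four cases shows that $\ZZ_{++}$ and $\ZZ_{--}$ realize $m = \gr_1$, whereas $\ZZ_{-+}$ and $\ZZ_{+-}$ realize $m = \gr_2$.

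Next, for each $\ZZ_P$ I would write down the signs $\sigma_1,\sigma_2,\sigma_3,\sigma_4$ at $a_1,a_2,a_3,a_4$ as prescribed by Figure~\ref{fig:s-pmc}, and then compute, for each of the six subarcs, the product of the signs at its two endpoints: $\sigma_1\sigma_2, \sigma_2\sigma_3, \sigma_3\sigma_4, \sigma_1\sigma_3, \sigma_2\sigma_4, \sigma_1\sigma_4$ for $\rho_1,\rho_2,\rho_3,\rho_{12},\rho_{23},\rho_{123}$ respectively. Comparing the resulting tuple of signs against $\bigl((-1)^{\gr_j(\rho_I)}\bigr)_{\rho_I}$ for the appropriate $j\in\{1,2\}$ yields the desired equalities. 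One uniform simplification worth pulling out first: since $a_1,a_3$ and $a_2,a_4$ are matched pairs, they carry opposite signs by Definition~\ref{def:spmc}, so $\sigma_1\sigma_3 = \sigma_2\sigma_4 = -1$ in every case, matching $\gr_1(\rho_{12}) = \gr_2(\rho_{12}) = \gr_1(\rho_{23}) = \gr_2(\rho_{23}) = 1$ unconditionally; this leaves only the four length-one subarcs and $\rho_{123}$ to verify case by case.

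The only real obstacle is the bookkeeping step of translating between the ``points in / points out'' formulation from Section~\ref{sssec:gr} and the ``starts at $-$, ends at $+$'' convention built into $\ZZ_P$, so that the correct grading function is assigned to each $\ZZ_P$. Once this correspondence is set, each of the remaining verifications is an immediate sign check, and the lemma follows.
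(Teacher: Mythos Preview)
Your approach is correct but differs from the paper's. The paper argues conceptually through the diagram $\mathsf{AZ}(\ZZ_P)$: the grading of $\rho_I$ is detected by whether the intersection sign at the vertex labeled $\rho_I$ agrees with that at the idempotent vertex vertically below it, and one sees directly on the diagram that these signs differ precisely when the two horizontal $\alpha$-segments meeting the right boundary at the endpoints of $\rho_I$ carry opposite orientations---which in turn happens exactly when the inherited $\pm$ labels at those endpoints disagree. This argument is uniform in $P$ and in $\rho_I$; it never splits into cases.

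Your route is a finite verification: pin down which of $\gr_1,\gr_2$ belongs to each $\ZZ_P$ (using the explicit statement that $\mathsf{AZ}(\ZZ_{++})$ realizes $\gr_1$ together with the ``both in / both out versus mixed'' dichotomy), then check the six sign products against the relevant grading row of Table~\ref{tab:az-gr}. Your observation that $\sigma_1\sigma_3 = \sigma_2\sigma_4 = -1$ handles $\rho_{12}$ and $\rho_{23}$ uniformly is a nice shortcut. The trade-off is clear: the paper's argument is intrinsic and would scale to other pointed matched circles, while yours is entirely elementary and avoids any geometric reasoning about $\mathsf{AZ}$ beyond identifying which grading each $\ZZ_P$ carries. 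For the torus, where everything is small, both are perfectly adequate.
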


\begin{proof}
Consider  $\mathsf{AZ}(\ZZ_P)$. Recall that idempotents always have grading zero, and the grading for $\rho_I$ can be determined by comparing the intersection sign at the vertex labelled $\rho_I$ with the intersection sign at the idempotent vertex vertically below it: if they agree, $\rho_I$ has grading zero; if they differ, it has grading one. By inspection we see that the above signs differ precisely when the two horizontal segments that   connect the respective intersection points to the right boundary edge of the diagram have opposing orientations as seen on the diagram. However, we can identify this right boundary component of $\mathsf{AZ}(\ZZ_P)$ that intersects the $\alpha$ curves with $\ZZ_P$, and correspondingly view $\rho_I$ as an oriented segment on $\ZZ_P$ that connects endpoints of the two horizontal segments discussed above. This in turn tells us that the orientations of these horizontal segments are the same as the orientations (locally speaking) of the $\alpha$-arcs where they intersect the endpoints of $\rho_I$ in $\ZZ_P$. Finally, if the orientations of these two $\alpha$-arcs differ, then the inherited signs of the endpoints of $\rho_I$ must differ (since those signs are precisely how the orientation is determined in the first place). 
\end{proof}


\section{Bordered Sign Assignments}\label{sec:bord-sa}

We now discuss how to extend the construction from \cite{hfz} to bordered Floer homology in the case of torus boundary.

For Definitions~\ref{def:bf}--\ref{def:sd}, fix a positive integer $n$ and a sign sequence  $P\in \{-,+\}^{2}$. 

\begin{defn}
A \emph{formal right (resp.\ left) bordered generator} is a one-to-one correspondence $\rho$ between two ordered $n$-element sets $\alphas$ and $\betas$ (which can be represented as a subset of the Cartesian product $\alphas \times \betas$), together with a function $\e$ from $\rho$ to $\{\pm1\}$ and a choice $\s\in\{1,2\}$. The choice $\s$ can be thought of as a special label associated to the last (resp.~first) element in $\alphas$, as described below. The set of all such generators will be denoted  $\GnA^R$ (resp.~$\GnA^L$). 
\end{defn}

 A bordered right (resp.\ left) generator can be more clearly thought of as $n$ disjoint crosses of oriented arcs. One arc of each cross is labelled by an element of the set $\{\alpha_1,\dots,\alpha_{n-1},\alpha_1^a,\alpha_2^a\}$ (resp.~$\{\alpha_1^a,\alpha_2^a,\alpha_2,\dots,\alpha_n\}$) so that only one of the two labels $\alpha_1^a$ and $\alpha_2^a$ is used, and each $\alpha_i$ label is used exactly once. The remaining arcs are labelled by elements of the set $\betas$ so that each element is used once. The choice of $\s$ is the index of the used label $\alpha_{\s}^a$. The correspondence $\rho$ can be thought of as the permutation $\sigma \in S_n$ represented by the labelling, thinking of the $\alpha_{\s}^a$ label as the $n^{\text{th}}$ (resp.\ first) coordinate of the permutation, and $\e=(\e_1,\dots,\e_n) \in \{\pm 1\}^n$ is determined by the sign of the crosses, with the convention that the $\alpha$-arcs come first. Once again we call $\sigma$ and $\e$ the \emph{associated permutation} and \emph{sign profile} for $\x$, and we call $\s$ the \emph{idempotent} for $\x$ (see the note below for an explanation). We write $\x = (\sigma, \e, \s)$. For clarity, we will frequently write $\sigma_\x$ for the permutation $\sigma$ associated to a generator $\x$. Similarly, we will often write $\e(\x)$ and $\s(\x)$.

 We denote the sign profile that is identically $1$ in each factor by ${\bf 1}$.

\begin{note}
Given a specific bordered Heegaard diagram $\HD$ and a sign sequence  $P\in \{-,+\}^{2}$, one can order and orient all of the $\alpha$-circles, $\beta$-circles, and $\alpha$-arcs so that if we identify $\bdy \HD$ with $\ZZ_P$, each $\alpha$-arc is oriented from a $+$ to a $-$. With this choice, each generator of the diagram has two associated formal bordered generators, a left one and a right one. For a left (resp.~right) generator, we think of the occupied $\alpha$-arc as the ``last" (resp.~``first") of all the occupied $\alpha$-curves. The generator is then specified by the associated permutation, the signs of intersections, and the index $\s$ of the occupied $\alpha^a$-arc (with the convention that $\alpha_1^a$ is the first arc we see as we follow $\bdy \HD$ along the  orientation induced as boundary of $\HD$, starting at the basepoint $z$). We remark that in the forthcoming definition of the type $A$ structure $\cfahat(\HD, \Z)$, the choice $\s$ will correspond to the idempotent of $\alg_P$ which will act on the given generator by the identity.
\end{note}

 \begin{defn}\label{def:bf}
A \emph{formal right (resp.\ left) internal flow $\phi$ between formal bordered generators $\x$ and $\y$  in $\GnA^R$ (resp.\ in $\GnA^L$)}, is defined analogously to a formal flow from \cite{hfz} (also see Definitions~\ref{def:oss-bigon}, \ref{def:oss-rect}, \ref{def:oss-flow})---geometrically it appears the same as in the closed case (a \emph{bigon} or a \emph{rectangle}), but the labels now come from $\{\alpha_1,\dots,\alpha_{n-1},\alpha_1^a,\alpha_2^a\}$ (resp.\ $\{\alpha_1^a,\alpha_2^a,\alpha_2,\dots,\alpha_n\}$) and $\betas$ instead of $\alphas$ and $\betas$. 

A \emph{formal right (resp.~left) bordered flow} $\phi$ between formal bordered generators in $\GnA^R$ (resp.\ $\GnA^L$) is graphically similar to a formal rectangle, but with $n-1$ disjoint crosses and a rectangle. For a right (resp.\ left) bordered flow, the edges of the rectangle carry the following data when traversed counterclockwise. One edge, hereafter referred to as the \emph{bottom edge}, has label $\alpha_i^a$; the next edge, called the \emph{right edge}, is unlabeled (resp.\ labeled $\beta_{\sigma(n)}$); the third edge, called the \emph{top edge}, is labeled $\alpha_j^a$, with $j$ not necessarily distinct from $i$; the last edge, called the \emph{left edge},  is labeled $\beta_{\sigma(n)}$ (resp.\ unlabeled).  Label the intersection point of each $\alpha$ arc with the unlabeled edge with a $+$ if the $\alpha$ arc is oriented towards (resp.\ away from) that point, and with a $-$ if the $\alpha$ arc is oriented away from  (resp.\ towards) that point, and orient the unlabeled edge from bottom to top. We further require that the unlabeled edge with the $\pm$ decorations on it can be identified with an oriented segment of $\ZZ_P\setminus z$ so that the $\pm$ labels match. See Figure~\ref{fig:rho-flows}, for example.  

The set of  flows between elements of $\GnA^R$ (resp.\ $\GnA^L$) is denoted $\FnA^R$ (resp.\ $\FnA^L$). We call the flows in each set \emph{right} (resp.\ \emph{left}) formal flows.
\end{defn}

\begin{note}
Observe that, by definition of formal bordered generator, in a formal internal flow only one of the two $\alpha_i^a$ labels may be used. Also observe that formal internal flows may carry $\alpha^a$-labels at the moving coordinates.  Last, note that a right (resp.~left) bordered  flow has only one moving coordinate, which is the $n^\text{th}$ (resp.~first) one. 
\end{note}

We can further divide right (resp.\ left) bordered flows into six classes, corresponding to the six non-trivial generators of $\alg_P$. We say that a bordered flow is of \emph{type $\rho_i$} if the decorated unlabeled edge, when oriented from bottom to top, agrees with the segment of $\ZZ_P$ representing $\rho_i$.  In figures, we will often label the ``unlabeled" edge with the corresponding $\rho_i$.

\vskip .2 cm

\begin{figure}[h]
\centering
\labellist
  	\pinlabel $\textcolor{red}{\alpha_1^a}$ at 65 855
	\pinlabel $\textcolor{red}{\alpha_2^a}$ at 65 762
	\pinlabel $\textcolor{red}{\alpha_2^a}$ at 150 855
	\pinlabel $\textcolor{red}{\alpha_1^a}$ at 150 762
	\pinlabel $\textcolor{red}{\alpha_1^a}$ at 235 855
	\pinlabel $\textcolor{red}{\alpha_2^a}$ at 235 762
	\pinlabel $\textcolor{red}{\alpha_1^a}$ at 320 855
	\pinlabel $\textcolor{red}{\alpha_1^a}$ at 320 762
	\pinlabel $\textcolor{red}{\alpha_2^a}$ at 405 855
	\pinlabel $\textcolor{red}{\alpha_2^a}$ at 405 762
	\pinlabel $\textcolor{red}{\alpha_1^a}$ at 490 855
	\pinlabel $\textcolor{red}{\alpha_2^a}$ at 490 762
	\pinlabel $\textcolor{black}{\rho_1}$ at 115 808
	\pinlabel $\textcolor{black}{\rho_2}$ at 200 808
	\pinlabel $\textcolor{black}{\rho_3}$ at 285 808
	\pinlabel $\textcolor{black}{\rho_{12}}$ at 370 808
	\pinlabel $\textcolor{black}{\rho_{23}}$ at 455 808
	\pinlabel $\textcolor{black}{\rho_{123}}$ at 540 808
	\pinlabel $\textcolor{red}{\alpha_1^a}$ at 95 743
	\pinlabel $\textcolor{red}{\alpha_2^a}$ at 95 650
	\pinlabel $\textcolor{red}{\alpha_2^a}$ at 180 743
	\pinlabel $\textcolor{red}{\alpha_1^a}$ at 180 650
	\pinlabel $\textcolor{red}{\alpha_1^a}$ at 265 743
	\pinlabel $\textcolor{red}{\alpha_2^a}$ at 265 650
	\pinlabel $\textcolor{red}{\alpha_1^a}$ at 350 743
	\pinlabel $\textcolor{red}{\alpha_1^a}$ at 350 650
	\pinlabel $\textcolor{red}{\alpha_2^a}$ at 435 743
	\pinlabel $\textcolor{red}{\alpha_2^a}$ at 435 650
	\pinlabel $\textcolor{red}{\alpha_1^a}$ at 520 743
	\pinlabel $\textcolor{red}{\alpha_2^a}$ at 520 650
	\pinlabel $\textcolor{black}{\rho_1}$ at 45 696
	\pinlabel $\textcolor{black}{\rho_2}$ at 130 696
	\pinlabel $\textcolor{black}{\rho_3}$ at 215 696
	\pinlabel $\textcolor{black}{\rho_{12}}$ at 298 696
	\pinlabel $\textcolor{black}{\rho_{23}}$ at 383 696
	\pinlabel $\textcolor{black}{\rho_{123}}$ at 465 696
		\endlabellist
  \includegraphics[scale=.795]{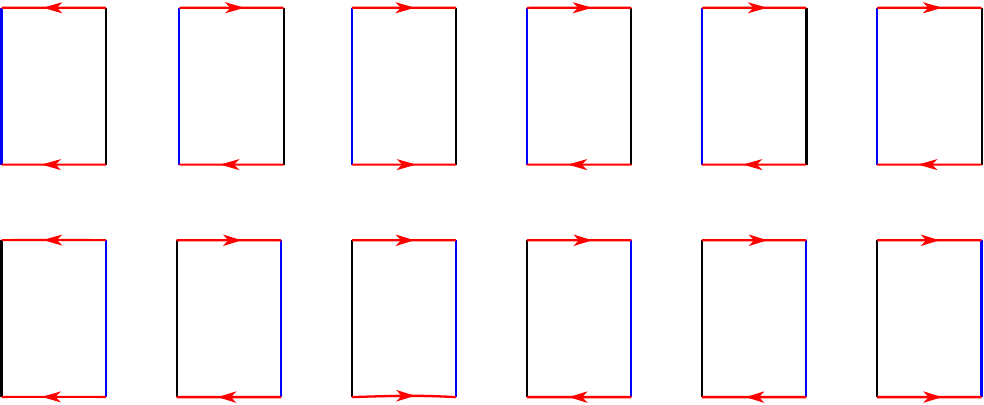} 
  \vskip .2 cm
       \caption{Top: The six classes of right bordered flows for the signed pointed matched circle $\ZZ_{--}$. Bottom: The six classes of left bordered flows for $\ZZ_{--}$. The non-moving coordinates, as well as the label and orientation of the (blue in this figure) $\beta$-arc at the moving coordinate, are omitted from the figure.}\label{fig:rho-flows}
\end{figure}

 Boundary degenerations and squares for flows in $\FnA^R$ or $\FnA^L$  are defined analogously to those for the closed case. The only difference is that for squares, some of the geometric configurations in Figure~\ref{fig:z-int-sq1} (but none of those in Figure~\ref{fig:z-int-sq2}) may now have unlabeled edges instead. See Figure~\ref{fig:z-bord-sq} for the additional cases.

\begin{defn}\label{def:bs}
Suppose that $(\phi_1,\phi_2)$ and $(\phi_3,\phi_4)$ are distinct pairs of flows, all in $\FnA^R$ or all in $\FnA^L$, that form a square. If all four flows are internal, we say that $(\phi_1,\phi_2)$ and $(\phi_3,\phi_4)$ form an \emph{internal square}. Otherwise, we say that $(\phi_1,\phi_2)$ and $(\phi_3,\phi_4)$ form a \emph{bordered square} (see Figure~\ref{fig:z-bord-sq}).
\end{defn}

Note that if  $(\phi_1,\phi_2)$ and $(\phi_3,\phi_4)$ form a bordered square, then exactly the first flow in one pair and the second flow in the other pair must be bordered.

\begin{figure}[h]
\centering
 \includegraphics[scale=.7]{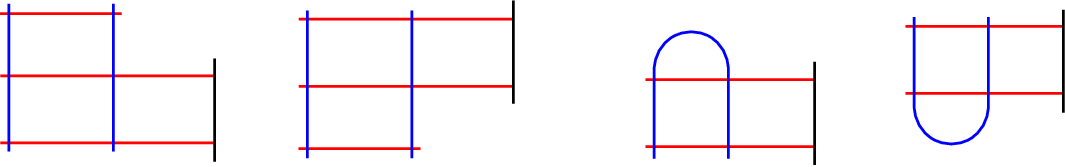} 
       \caption{Right bordered squares. Left bordered squares can be visualized by reflecting along a vertical line.}\label{fig:z-bord-sq}
\end{figure}

\begin{defn}\label{def:tri}
Suppose $\phi_1$, $\phi_2$, and $\phi_3$ are right bordered flows with the same label and orientation on the $\beta$-edge of the underlying rectangle, of  type $\rho_i, \rho_j$, and $\rho_k$, respectively. If $\phi_2(\phi_1(\x))=\phi_3(\x)$ and $\rho_i \rho_j = \rho_k\in \alg$, we say that the pair $(\phi_1,\phi_2)$ and the flow $\phi_3$ together form a \emph{bordered triangle}.
\end{defn}

\begin{defn}\label{def:sa}
A \emph{bordered sign assignment of  type $A$, compatible with $P$, and of power $n$} is a map $\STA$ from the set of formal flows $\FnA^R$ to $\{\pm 1\}$ with the following properties:
\begin{itemize}
\item[\mylabel{itm:s1a}{(A-1)}] \quad if $(\phi_1,\phi_2)$ is an $\alpha$-type boundary degeneration, then
\[\STA(\phi_1) \cdot \STA(\phi_2) = 1;\]

\item[\mylabel{itm:s2a}{(A-2)}]   \quad if $(\phi_1,\phi_2)$ is a $\beta$-type boundary degeneration, then
\[\STA(\phi_1) \cdot \STA(\phi_2) = -1;\]

\item [\mylabel{itm:s3a}{(A-3)}]  \quad if the two pairs $(\phi_1,\phi_2)$ and $(\phi_3,\phi_4)$ form an internal square, then 
\[\STA(\phi_1) \cdot \STA(\phi_2) = -\STA(\phi_3) \cdot \STA(\phi_4);\]

\item[\mylabel{itm:a1}{(A-4)}]  \quad if the two pairs $(\phi_1, \phi_2)$ and  $(\phi_3, \phi_4)$ form a bordered square, then 
\[\STA(\phi_1) \cdot \STA(\phi_2) = \STA(\phi_3) \cdot \STA(\phi_4);\]

\item[\mylabel{itm:a2}{(A-5)}]  \quad if the pair  $(\phi_1, \phi_2)$ and  the flow $\phi_3$ form a bordered triangle, then
\[\STA(\phi_1) \cdot \STA(\phi_2) = \STA(\phi_3).\]
\end{itemize}
\end{defn}

\begin{note}
The first three properties are analogous to those required of a sign assignment in \cite{hfz}, while the remaining two correspond to desired properties of a type $A$ structure. Properties~\ref{itm:s3a} and \ref{itm:a1} say that internal squares anti-commute and bordered squares commute, respectively.
\end{note}

\begin{defn}\label{def:sd}
A \emph{bordered sign assignment of type $D$, compatible with $P$,  and of power $n$} is a map $\STD$ from the set of formal flows $\FnA^L$ to $\{\pm 1\}$ with the following properties:
\begin{itemize}
\item[\mylabel{itm:s1d}{(D-1)}] \quad if $(\phi_1,\phi_2)$ is an $\alpha$-type boundary degeneration, then
\[\STD(\phi_1) \cdot \STD(\phi_2) = 1;\]

\item[\mylabel{itm:s2d}{(D-2)}]   \quad if $(\phi_1,\phi_2)$ is a $\beta$-type boundary degeneration, then
\[\STD(\phi_1) \cdot \STD(\phi_2) = -1;\]

\item [\mylabel{itm:s3d}{(D-3)}]  \quad if the two pairs $(\phi_1,\phi_2)$ and $(\phi_3,\phi_4)$ form an internal square, then
\[\STD(\phi_1) \cdot \STA(\phi_2) = -\STD(\phi_3) \cdot \STA(\phi_4);\]

\item[\mylabel{itm:d1}{(D-4)}]  \quad if the two pairs $(\phi_1, \phi_2)$ and  $(\phi_3, \phi_4)$ form a bordered square and the bordered flows are of type $\rho$, then
\[(-1)^{\mid \rho \mid}\STD(\phi_1) \cdot \STD(\phi_2) \cdot \STD(\phi_3) \cdot \STD(\phi_4)=-1;\]
\end{itemize}
\end{defn}

\begin{rmk}
Recall that the grading $|\rho|$ depends on the sign sequence $P$.
\end{rmk}

\begin{defn}\label{def:gauge}

Let $S$ and $S'$ be two bordered sign assignments (both of type $A$ or both of type $D$). We say that $S$ and $S'$ are \emph{gauge equivalent} if there exists a map $u: \GnA \to \{\pm 1\}$ such that for any $\phi \in \FnA$ a formal flow $\phi : \x \to \y$ we have that $S(\phi)= u(\x)S'(\phi)u(\y)$. We also call $u$ a \emph{gauge equivalence}.
\end{defn}

Several distinct gauge equivalence classes of type $A$, as well as of type $D$, bordered sign assignments exist (see Section~\ref{sec:cfa}). Further, not every ``pairing" of a type $A$ and a type $D$ sign assignment results in a sign assignment in the closed sense. We discuss pairing further in Section~\ref{sec:pairing}.


\section{Type $A$ sign assignments}
\label{sec:cfa}

\subsection{The existence of a type $A$ sign assignment}
\label{ssec:a-exist}

 Our first  step in defining a bordered sign assignment  $\STA$ of  type $A$ is to define $\STA$ on all formal internal flows. For $\s\in \{1,2\}$, let $\GnA^{\s, R}$ be the set of formal right bordered generators of form $(\sigma, \e, \s)$.  Define two injections $f_{\s}:\GnA^{\s, R}\to \gens_{n+1}$ as follows.

 If $\x = (\sigma, \e, \s)$, define $f_{\s}(\sigma)$
 to be the permutation that fixes $n+1$ and maps $i$ to $\sigma(i)$ for $i = 1, \ldots, n$. Define $f_{\s}(\e)$ as the $(n+1)$-tuple in $\{\pm 1\}^{n+1}$ whose first $n$ coordinates agree with those of $\e$, and whose last coordinate is $1$. Define $f_{\s}(\x) = (f_{\s}(\sigma), f_{\s}(\e))$.

Graphically, for $\x\in \GnA^{\s, R}$ we think of $f_\s(\x)$ as the formal generator obtained from $\x$ by replacing the label $\alpha_\s^a$ with $\alpha_n$, and adding one more cross with arcs labeled $\alpha_{n+1}$ and $\beta_{n+1}$ and oriented so their intersection is positive (with the convention that the $\alpha$-arc comes first). See for example Figure~\ref{fig:f_gen}.

\begin{figure}[h]
\centering
  \labellist
  	\pinlabel \textcolor{red}{$\alpha_1$} at 2 100
	\pinlabel \textcolor{blue}{$\beta_{\sigma(1)}$} at 55 100
	\pinlabel \textcolor{red}{$\alpha_2$} at 90 100
	\pinlabel \textcolor{blue}{$\beta_{\sigma(2)}$} at 145 100
	\pinlabel $\dots$ at 172 140
	\pinlabel \textcolor{red}{$\alpha_{n-1}$} at 195 100
	\pinlabel \textcolor{blue}{$\beta_{\sigma(n-1)}$} at 250 100
	\pinlabel \textcolor{red}{$\alpha_1^a$} at 285 100
	\pinlabel \textcolor{blue}{$\beta_{\sigma(n)}$} at 340 100
	\pinlabel \textcolor{red}{$\alpha_1$} at 2 -10
	\pinlabel \textcolor{blue}{$\beta_{\sigma(1)}$} at 55 -10
	\pinlabel \textcolor{red}{$\alpha_2$} at 90 -10
	\pinlabel \textcolor{blue}{$\beta_{\sigma(2)}$} at 145 -10
	\pinlabel $\dots$ at 172 27
	\pinlabel \textcolor{red}{$\alpha_{n-1}$} at 195 -10
	\pinlabel \textcolor{blue}{$\beta_{\sigma(n-1)}$} at 250 -10
	\pinlabel \textcolor{red}{$\alpha_n$} at 290 -10
	\pinlabel \textcolor{blue}{$\beta_{\sigma(n)}$} at 340 -10
	\pinlabel \textcolor{red}{$\alpha_{n+1}$} at 380 -10
	\pinlabel \textcolor{blue}{$\beta_{n+1}$} at 440 -10
   \endlabellist
 \includegraphics[scale=.7]{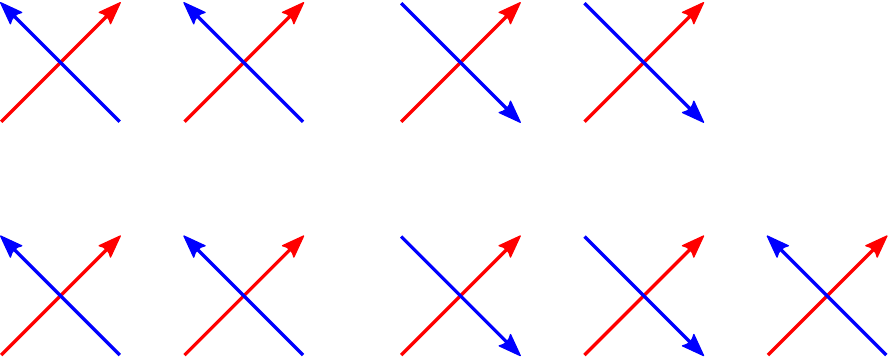} 
      \vskip .3 cm
       \caption{Top: A formal bordered generator $\x\in \GnA^{1, R}$. Bottom: The formal generator $f_1(\x)\in \gens_{n+1}$.}\label{fig:f_gen}
\end{figure}

 Next, we extend the maps $f_\s$ to internal flows. 
Note that if  $\phi$ is an internal flow from $\x = (\sigma_1, \e_1, \s_1)$ to $\y= (\sigma_2, \e_2, \s_2)$, then $\s_1 = \s_2$, so simply write $\s$ for $\s_1$ or $\s_2$. We define $f(\phi)$ as the formal flow from $f_{\s}(\x)$ to $f_{\s}(\y)$ that is obtained from $\phi$ graphically by relabeling $\alpha_{\s}$ to $\alpha_n$ and adding a cross with arcs labeled $\alpha_{n+1}$ and $\beta_{n+1}$ and oriented so their intersection is positive. See for example Figure~\ref{fig:f_flow}.

\begin{figure}[h]
\centering
  \labellist
  	\pinlabel \textcolor{red}{$\alpha_1$} at 2 100
	\pinlabel \textcolor{blue}{$\beta_{\sigma(1)}$} at 55 100
	\pinlabel \textcolor{red}{$\alpha_2$} at 90 100
	\pinlabel \textcolor{blue}{$\beta_{\sigma(2)}$} at 145 100
	\pinlabel \textcolor{red}{$\alpha_{3}$} at 185 100
	\pinlabel \textcolor{blue}{$\beta_{\sigma(3)}$} at 240 100
	\pinlabel \textcolor{red}{$\alpha_1^a$} at 275 100
	\pinlabel \textcolor{blue}{$\beta_{\sigma(4)}$} at 320 100
	\pinlabel \textcolor{red}{$\alpha_1$} at 2 -10
	\pinlabel \textcolor{blue}{$\beta_{\sigma(1)}$} at 55 -10
	\pinlabel \textcolor{red}{$\alpha_2$} at 90 -10
	\pinlabel \textcolor{blue}{$\beta_{\sigma(2)}$} at 145 -10
	\pinlabel \textcolor{red}{$\alpha_{3}$} at 185 -10
	\pinlabel \textcolor{blue}{$\beta_{\sigma(3)}$} at 240 -10
	\pinlabel \textcolor{red}{$\alpha_4$} at 275 -10
	\pinlabel \textcolor{blue}{$\beta_{\sigma(4)}$} at 320 -10
	\pinlabel \textcolor{red}{$\alpha_{5}$} at 360 -10
	\pinlabel \textcolor{blue}{$\beta_{5}$} at 415 -10
   \endlabellist
 \includegraphics[scale=.7]{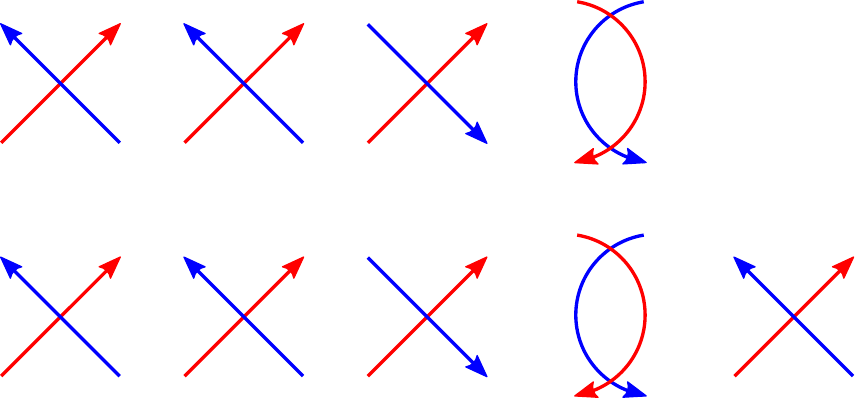} 
      \vskip .3 cm
       \caption{Top: A formal internal flow $\phi\in\FnA^R$. Bottom: The formal flow $f(\phi)\in \mathcal{F}_{n+1}$. (Here $n=4$.)}\label{fig:f_flow}
\end{figure}

\begin{defn}\label{def:a-formula}
Fix $P = (++)$. Let $\STA$ be the map $\FnA^R \to \{\pm 1\}$ defined as follows.
\begin{enumerate}
\item if $\phi$ is an internal flow, then $\STA(\phi)=S(f(\phi))$, where $S$ is the sign assignment defined in \cite{hfz};
\item if $\psi$ is a bordered flow where the starting and ending generator have the same sign at the moving coordinate, then $\STA(\psi)=1$;
\item if $\psi$ is a bordered flow with starting generator $\x$ and the starting and ending generator have different signs at the moving coordinate, then $\STA(\psi)=\sign(\sigma_{\x})  \e_n(\x)$.
\end{enumerate}
\end{defn}

\begin{note}
We can think of $\STA$ as two functions---$S_{\mathrm{int}}$ defined on internal flows, and $S_{\partial}$ defined on bordered flows. This motivates the following definition.
\end{note}

\begin{defn}\label{def:int-sa}
An \emph{internal sign assignment} is a map $S_{\mathrm{int}}$ defined only on internal flows by $S_{\mathrm{int}}(\phi)=S(f(\phi))$ where $S$ is any closed sign assignment.
\end{defn}

\begin{thm}\label{thm:sta_exist}
The function $\STA$ is a bordered sign assignment of type $A$.
\end{thm}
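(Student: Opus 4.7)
The plan is to verify each of the five axioms (A-1)--(A-5) of Definition~\ref{def:sa} for the map $\STA$ from Definition~\ref{def:a-formula}. Since $\STA$ is defined piecewise---via the closed sign assignment $S$ from \cite{hfz} on internal flows, and by explicit formulas on bordered flows---the proof naturally splits according to which kind of flow appears in each axiom. The first three axioms involve only internal flows and can be reduced to the corresponding axioms for $S$, while (A-4) and (A-5) require direct case analysis.

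For axioms (A-1), (A-2), and (A-3), I would extend the injection $f_\s$ to an injection $f$ on internal flows by relabeling $\alpha_\s^a$ as $\alpha_n$ and appending a fixed positive cross at coordinates $(n+1,n+1)$, consistent with its use in the definition of $\STA$. The key observation is that $f$ sends $\alpha$-type (resp.\ $\beta$-type) boundary degenerations in $\FnA^R$ to $\alpha$-type (resp.\ $\beta$-type) degenerations in $\mathcal{F}_{n+1}$, and sends internal squares in $\FnA^R$ to squares in $\mathcal{F}_{n+1}$. This is because relabeling leaves the local geometry unchanged and the appended cross at $(n+1,n+1)$ is disjoint from and identical across all flows in a given degeneration or square. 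Since $\STA(\phi) = S(f(\phi))$ on internal flows, (A-1), (A-2), and (A-3) follow directly from (S-1), (S-2), and (S-3) for $S$.

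Axiom (A-4) I would verify by enumerating the bordered square configurations in Figure~\ref{fig:z-bord-sq}. In each such square, one pair has its first flow bordered and the other pair has its second flow bordered. The two bordered flows in the square share their boundary edge on $\ZZ_P$ and are of the same type $\rho_I$, so their $\STA$-contributions compare via a transparent $\sign(\sigma)\e_n$ calculation. The two internal flows lie over the ``same'' domain but with a non-moving coordinate interchanged, so after applying $f$ they are related by a simple flip in $\mathcal{F}_{n+1}$ (Definition~\ref{def:sflip}), and the resulting sign relation can be computed by chaining (S-3) with the auxiliary bigons provided by the simple flip. For axiom (A-5), the three flows $\phi_1,\phi_2,\phi_3$ in a bordered triangle share a $\beta$-edge, forcing $\sigma_\x = \sigma_\y = \sigma_\z$. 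The identity $\STA(\phi_1)\STA(\phi_2) = \STA(\phi_3)$ then reduces to a short case analysis on whether $\e_n$ changes across each of the three flows, using the constraint that the sign-flip behavior is determined by the type $\rho_I$ and the orientation of the shared $\beta$-edge.

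The main obstacle is expected to be axiom (A-4). Unlike internal squares, which lift directly via $f$ to squares in the closed setting, bordered squares mix internal and bordered flows, and $f$ applied to the internal halves produces configurations in $\mathcal{F}_{n+1}$ that are \emph{not} themselves squares but are related by simple flips. Thus one cannot simply quote (S-3); one must combine (S-3) with the sign change coming from the simple flip together with the explicit bordered formula, and the careful bookkeeping needed to verify that all these contributions multiply to $+1$ is the most delicate part of the argument.
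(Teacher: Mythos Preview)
Your proposal is correct and follows essentially the same approach as the paper: reduce (A-1)--(A-3) to the closed axioms via $f$, handle (A-4) by a case analysis comparing $f(\phi_2)$ with $f(\phi_3)$ through auxiliary bigons and \ref{itm:s3}, and check (A-5) by direct computation using $\sigma_\x=\sigma_\y=\sigma_\z$ together with the constraint on sign flips along the $\ZZ_{++}$ arc.

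One small imprecision worth flagging in your (A-4) sketch: your statement that the two internal flows are ``related by a simple flip in $\mathcal{F}_{n+1}$'' is literally correct only when the internal flow does not have $n$ among its moving coordinates (the disjoint configurations). When the internal flow shares the moving coordinate $n$ with the bordered flow (the overlapping configurations in Figure~\ref{fig:z-bord-sq}), the $n^{\mathrm{th}}$ coordinate is \emph{moving} for $\phi_2$ and $\phi_3$, so Definition~\ref{def:sflip} does not apply verbatim. The paper handles these cases by constructing bigons $b\colon f_{\s(\x)}(\x)\to f_{\s(\x')}(\x')$ and $b'\colon f_{\s(\y)}(\y)\to f_{\s(\y')}(\y')$ at the $n^{\mathrm{th}}$ coordinate, which form a square in $\mathcal{F}_{n+1}$ with $f(\phi_2)$ and $f(\phi_3)$ via one of the overlapping configurations of Figure~\ref{fig:z-int-sq1}; one then compares $S(b)$ with $S(b')$ using the explicit $S_0\cdot\prod\e_k$ formula from \cite{hfz}. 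This is the same mechanism you describe, just not a simple flip in the technical sense, and it requires a further subcase split on whether the $\beta$-edges of the rectangle are oriented consistently (which governs whether $\e_i$ flips at the other moving coordinate). Your identification of (A-4) as the delicate part is exactly right.
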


\begin{proof}
We first show that $\STA$ satisfies properties \ref{itm:s1a} through \ref{itm:s3a}. For example, suppose $(\phi_1, \phi_2)$ is an $\alpha$-type boundary degeneration. Then $(f(\phi_1), f(\phi_2))$ is an $\alpha$-type boundary degeneration in  $\mathcal{F}_{n+1}$, so $\STA(\phi_1) \cdot \STA(\phi_2) = S(f(\phi_1)) \cdot S(f(\phi_2)) = 1$, i.e.\ \ref{itm:s1a} holds. Analogous arguments show that \ref{itm:s2a} and \ref{itm:s3a} hold.

Next, we show that $\STA$ satisfies \ref{itm:a1}. Suppose  $(\phi_1: \x \to \x', \phi_2: \x' \to \y')$ and $(\phi_3: \x \to \y,\phi_4: \y \to \y')$ form a bordered square. Schematically, we have 
\begin{center}
    \begin{tikzpicture}
      \node at (0,0) (x) {$\x$};
      \node at (0,-2) (y) {$\y$};
      \node at (2,-0) (x') {$\x'$};
      \node at (2,-2) (y') {$\y'$};
      \draw[->] (x) -- (x') node[midway,above] {$\phi_1$};
      \draw[->]  (y) -- (y') node[midway,below] {$\phi_4$};
      \draw[->]  (x) -- (y) node[midway,left] {$\phi_3$};
      \draw[->]  (x') -- (y') node[midway,right] {$\phi_2$};
    \end{tikzpicture}
    \end{center}
Without loss of generality, assume that $\phi_1$ and $\phi_4$ are bordered flows, and $\phi_2$ and $\phi_3$ are internal flows. 

{\bf{Case 1: $\phi_2$ is a bigon.}} Then $\phi_3$ is a bigon too, and both bigons have the same moving coordinate $i$. We have 
\begin{align*}
\STA(\phi_2)&= S(f(\phi_2)) = S_0(f(\phi_2))\prod_{k=1}^{i-1} \e_k(f_{\s(\x')}(\x')) = S_0(f(\phi_2))\prod_{k=1}^{i-1} \e_k(\x')\\
\STA(\phi_3)&= S(f(\phi_3)) = S_0(f(\phi_3))\prod_{k=1}^{i-1} \e_k(f_{\s(\x)}(\x)) = S_0(f(\phi_3))\prod_{k=1}^{i-1} \e_k(\x)
\end{align*}
where the second equality in each line is the sign formula from \cite{hfz} \footnote{We caution the reader comparing  with \cite{hfz}  that there is a typo at  the end of the proof of \cite[Proposition 5.3]{hfz} -- the formula at the end should be the same as the one at the start of the proof, i.e.\ there should be no $sigma$ in the formula.}, and $S_0$ is a function that only depends on the decorated domain. Since $\phi_1$ is a bordered flow, $\x$ and $\x'$ agree at the first $n-1$ coordinates. In particular $\e_k(\x) = \e_k(\x')$ for $k<n$, so 
\[\STA(\phi_2)\STA(\phi_3) = S_0(f(\phi_2))S_0(f(\phi_3)).\]

{\bf{Case 1.1.}}  Suppose the moving coordinate of $\phi_2$, and hence of $\phi_3$, is $i<n$. Then $\phi_1$ and $\phi_2$ do not share moving coordinates, and neither do $\phi_3$ and $\phi_4$. It follows that $\phi_2$ and $\phi_3$ have the same decorated domain, so $S_0(f(\phi_2)) = S_0(f(\phi_3))$, hence $\STA(\phi_2)\STA(\phi_3) = 1$.

If $\e_n(\x) = e_n(\x')$, by definition we have $\STA(\phi_1)=1$. Since $\phi_1$ and $\phi_4$ have the same decorated domain, we also have  $\e_n(\y) = \e_n(\y')$, and hence $\STA(\phi_4) = 1$. Property~\ref{itm:a1} then holds for the given square. 

If, on the other hand, $\e_n(\x) = -\e_n(\x')$, then we also have $\e_n(\y) = - \e_n(\y')$, so 
\[\STA(\phi_1)\STA(\phi_4) = \sign(\sigma_{\x})  \e_n(\x)\sign(\sigma_{\y})  \e_n(\y).\]
Since $\phi_3: \x \to \y$ is a bigon, we have $\sigma_{\x} = \sigma_{\y}$, and since the bigon's moving coordinate is not $n$, we have $\e_n(\x) = \e_n(\y)$.  Thus, $\STA(\phi_1)\STA(\phi_4) = 1$, and \ref{itm:a1} holds in this case too. 

{\bf{Case 1.2.}}  Now suppose  the moving coordinate of $\phi_2$, and hence of $\phi_3$, is $n$. Then $\phi_1$ and $\phi_2$ share a moving coordinate, and so do $\phi_3$ and $\phi_4$. The square locally looks like one of the last two diagrams in Figure~\ref{fig:z-bord-sq}.

If $\e_n(\x) = \e_n(\x')$, it is easy to see that the decorated domains for $\phi_2$ and $\phi_3$ may only differ at the label of the $\alpha$-arc ($\alpha_1^a$ or $\alpha_2^a$), so $f(\phi_2)$ and $f(\phi_3)$ have the same decorated domain. Then  $S_0(f(\phi_2)) = S_0(f(\phi_3))$, so once again we have  $\STA(\phi_2)\STA(\phi_3) = 1$. Since $\phi_3: \x \to \y$ is a bigon with moving coordinate $n$, we have $\e_n(\y) = -\e_n(\x)$. Similarly, $\e_n(\y') = -\e_n(\x')$. So $\e_n(\y) = \e_n(\y')$. It follows that $\STA(\phi_1)= 1 = \STA(\phi_4)$. Thus, \ref{itm:a1} holds. 

If $\e_n(\x) = -\e_n(\x')$, the decorated domains for $f(\phi_2)$ and $f(\phi_3)$ have the same $\beta$-orientation but different $\alpha$-orientations.  By \cite{hfz}, $S_0(f(\phi_2)) = -S_0(f(\phi_3))$, so this time  $\STA(\phi_2)\STA(\phi_3) = -1$. This time  $\e_n(\y) = -\e_n(\x)$ and $\e_n(\y') = -\e_n(\x')$ imply $\e_n(\y) = - \e_n(\y')$, so 
\[\STA(\phi_1)\STA(\phi_4) = \sign(\sigma_{\x})  \e_n(\x)\sign(\sigma_{\y})  \e_n(\y)=-1.\]
Thus, \ref{itm:a1} holds again. 

{\bf{Case 2: $\phi_2$ is a rectangle.}} 
 Then $\phi_3$ is a rectangle, and both rectangles have the same moving coordinates $i,j$. However, since $\phi_3$ occurs before the bordered flow $\phi_4$ in the composition, it is important to note that $\sigma_{\x}=\sigma_{\y} \cdot (i \ j)$ and as such $\sign(\sigma_{\x})=-\sign(\sigma_{\y})$.

{\bf{Case 2.1.}} Suppose the moving coordinates of $\phi_2$, and hence of $\phi_3$, are $i,j \neq n$. Note that this implies $\e_n(\x)=\e_n(\y)$. Then $\phi_1$ and $\phi_2$ do not share moving coordinates, and neither do $\phi_3$ and $\phi_4$. It follows that $\phi_2$ and $\phi_3$ have the same decorated domain. 

If $\e_n(\x)=\e_n(\x')$, then we also have $\e_n(\y)=\e_n(\y')$. This in turn implies that $f_\s(\x)=f_{\s'}(\x')$ and $f_\s(\y)=f_{\s'}(\y')$, and therefore $f(\phi_2)=f(\phi_3)$. So
\[\STA(\phi_2)=S(f(\phi_2))=S(f(\phi_3))=\STA(\phi_3).\]
Finally, since $\e_n(\x)=\e_n(\x')$ and $\e_n(\y)=\e_n(\y')$ we have $\STA(\phi_1)=1=\STA(\phi_4)$. Therefore we have $\STA(\phi_1) \cdots \STA(\phi_4)$=1 and it follows that Property~\ref{itm:a1} holds for this square. 

If $\e_n(\x)= - \e_n(\x')$, then we also have $\e_n(\y)= - \e_n(\y')$. This in turn implies that while $\phi_2$ and $\phi_3$ have the same decorated domain, if we examine $f(\phi_2)$ and $f(\phi_3)$ we see that they are related by a simple flip at the $n^{th}$ coordinate. The two bigons $b:f_{\s(\x)}(\x) \to f_{\s(\x')}(\x'), b':f_{\s(\y)}(\y) \to f_{\s(\y')}(\y')$ in the square for said simple flip share the same decorated domain, which implies $S_0(b)=S_0(b')$. Then when examining $\e_k(f_{\s(\x)}(\x))$ versus $\e_k(f_{\s(\y)}(\y))$ for all k, we see that they either agree at all places, or differ at both $i$ and $j$ and agree everywhere else. In either scenario, we have 
\[S(b)=S_0(b)\prod_{k=1}^{n-1}\e_k(f_{\s(\x)}(\x))=S_0(b')\prod_{k=1}^{n-1}\e_k(f_{\s(\y)}(\y))=S(b').\]
Looking back at the definition of a simple flip, this tells us 
\[\STA(\phi_2)=S(f(\phi_2))= - S(b)S(f(\phi_3))S(b')= - \STA(\phi_3).\]
However, since $\e_n(\x)= - \e_n(\x')$ and $\e_n(\y)= - \e_n(\y')$, we note that $\STA(\phi_1)=\e_n(\x) \sign(\sigma_{\x})=\e_n(\y)( - \sign(\sigma_{\y}))= - \STA(\phi_4)$. Therefore we have
\[\STA(\phi_1)\STA(\phi_2)\STA(\phi_3)\STA(\phi_4)=\STA(\phi_1)\STA(\phi_2)( - \STA(\phi_2))( - \STA(\phi_1))=1\]
and we see that Property~\ref{itm:a1} still holds.

{\bf{Case 2.2.}} Suppose one of the moving coordinates of $\phi_2$, and hence of $\phi_3$, is the $n^{th}$ coordinate. Similarly to  Case 2.1, we note that $\sigma_{\x}=\sigma_{\y} \cdot (i \ n)$ and as such $\sign(\sigma_{\x})=-\sign(\sigma_{\y})$. Because one of the moving coordinates for $\phi_2$ and $\phi_3$ is $n$, this implies that we are considering a square that comes from one of the first two cases of Figure~\ref{fig:z-bord-sq}.

If $\e_n(\x)=\e_n(\x')$, then we also have $\e_n(\y)=\e_n(\y')$. This in turn implies that $f_\s(\x)=f_{\s'}(\x')$ and $f_\s(\y)=f_{\s'}(\y')$, and therefore $f(\phi_2)=f(\phi_3)$. So
\[\STA(\phi_2)=S(f(\phi_2))=S(f(\phi_3))=\STA(\phi_3).\]
Finally, since $\e_n(\x)=\e_n(\x')$ and $\e_n(\y)=\e_n(\y')$ we have $\STA(\phi_1)=1=\STA(\phi_4)$. Therefore we have $\STA(\phi_1) \cdots \STA(\phi_4)$=1 and it follows that Property~\ref{itm:a1} holds for this square. 

If $\e_n(\x)= - \e_n(\x')$, then we also have $\e_n(\y)= - \e_n(\y')$. Then there exist formal bigons $b:f_{\s(\x)}(\x) \to f_{\s(\x')}(\x'),b':f_{\s(\y)}(\y) \to f_{\s(\y')}(\y')$ from $f_{\s(\x)}(\x)$ to $f_{\s(\x')}(\x')$ and from $f_{\s(\y)}(\y)$ to $f_{\s(\y')}(\y')$ respectively that together with $f(\phi_2)$ and $f(\phi_3)$ (who share a geometric domain) form a square. Note that $b,b'$ share a decorated domain, and as such $S_0(b)=S_0(b')$.

{\bf{Case 2.2.1.}} Suppose that the two edges of the (geometric) domain for $f(\phi_2)$ and $f(\phi_3)$ with $\beta$ labels have the same orientation. Note that this implies $\e_n(\x)=\e_n(\y)$. Then $\e_k(f_{\s(\x)}(\x))=\e_k(f_{\s(\y)}(\y))$ for all $k$, and thus $S(b)=S(b')$. However, this implies that 
\[\STA(\phi_2)=S(f(\phi_2))= - S(f(\phi_3))=- \STA(\phi_3)\]
 since the four flows made a square and the product of their signs must give -1. But we also have that 
\[\STA(\phi_1)=\e_n(\x)\sign(\sigma_{\x})=\e_n(\y)( - \sign(\sigma_{\y}))= - \STA(\phi_4).\]
Putting it altogether, we see that
\[\STA(\phi_1)\STA(\phi_2)\STA(\phi_3)\STA(\phi_4)=\STA(\phi_1)\STA(\phi_2)( - \STA(\phi_2))( - \STA(\phi_1))=1\]
and therefore Property~\ref{itm:a1} still holds.

{\bf{Case 2.2.2}} Suppose that the two edges of the (geometric) domain for $f(\phi_2)$ and $f(\phi_3)$ with $\beta$ labels differ in orientation. Note that this implies $\e_n(\x)= - \e_n(\y)$. Then $\e_i(f_{\s(\x)}(\x))= - \e_i(f_{\s(\y)}(\y))$, and $\e_k(f_{\s(\x)}(\x))=\e_k(f_{\s(\y)}(\y))$ for all $k \neq i,n$. Thus, 
\[S(b)=S_0(b)\prod_{k=1}^{n-1}\e_k(f_{\s(\x)}(\x))=S_0(b')( - \prod_{k=1}^{n-1}\e_k(f_{\s(\y)}(\y)))= - S(b').\]
As before, since these two bigons make a square with $f(\phi_2)$ and $f(\phi_3)$, the product of the signs of these four flows must give -1. This then implies that $\STA(\phi_2)=S(f(\phi_2))=S(f(\phi_3))=\STA(\phi_3)$. We also have that 
\[\STA(\phi_1)=\e_n(\x)\sign(\sigma_{\x})=( - \e_n(\y))( - \sign(\sigma_{\y}))=\STA(\phi_4).\]
As such, we see that $\STA(\phi_1) \cdots \STA(\phi_4)= 1$ and therefore Property~\ref{itm:a1} holds.

Finally, we show that the sign assignment $\STA$ satisfies  \ref{itm:a2}. Suppose the pair  $(\phi_1:\x \to \y, \phi_2: \y\to \z)$ and the flow $\phi_3: \x\to \z$ form a bordered triangle.

Suppose  $\e_n(\x) = \e_n(\y)$. Orient the unlabeled edge in the geometric representation of the bordered triangle so it agrees with the orientation on the boundary of the domains of the flows. Observing that the neighborhood of that edge embeds in the signed pointed matched circle  $\ZZ_P$, 
 we see that at most two of the intersections of the edge with the $\alpha$-arcs have the same sign. Equivalently, at most two of $\e_n(\x), \e_n(\y), \e_n(\z)$ are the same, so $\e_n(\z) = -\e_n(\x) = -\e_n(\y)$. Then 
\begin{align*}
\STA(\phi_1) &= 1,\\
\STA(\phi_2) &= \sign(\sigma_{\y})\e_n(\y),\\
\STA(\phi_3) &= \sign(\sigma_{\x})\e_n(\x).
\end{align*} 
Since $\phi_1$ is bordered, we have $\sigma_{\x} = \sigma_{\y}$, so $\STA(\phi_2) = \STA(\phi_3)$ and  \ref{itm:a2} holds. 

Now suppose  $\e_n(\x) = - \e_n(\y)$. When traversing the signed pointed matched circle we have chosen, signs may only flip once. Equivalently, when traversing the unlabeled edge, its intersection signs with the $\alpha$-arcs may only flip once. Hence the sign sequence $\e_n(\x), \e_n(\y), \e_n(\z)$ may only have one flip, so $\e_n(\z) = \e_n(\y) = -\e_n(\x)$. Then 
\begin{align*}
\STA(\phi_1) &= \sign(\sigma_{\x})\e_n(\x),\\
\STA(\phi_2) &= 1,\\
\STA(\phi_3) &= \sign(\sigma_{\x})\e_n(\x),
\end{align*} 
and \ref{itm:a2} holds again.
\end{proof}

While this proves there exists a type $A$ sign assignment for the signed matched pointed circle given by $P=(++)$, the astute reader may ask about the other sign sequences.

\begin{lem}\label{lem:existA}
There exists a bordered sign assignment of type $A$ for any sign sequence $P' \in \{+,-\}^2$.
\end{lem}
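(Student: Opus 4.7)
The plan is to construct, for each $P'\in\{+,-\}^2$, a bordered sign assignment $\STAp$ of type $A$ by generalizing the argument of Theorem~\ref{thm:sta_exist}. On internal flows, I set $\STAp(\phi)=S(f(\phi))$ exactly as in Definition~\ref{def:a-formula}; since the notions of internal flow, internal square, and boundary degeneration are all insensitive to the sign sequence $P'$, properties \ref{itm:s1a}--\ref{itm:s3a} follow verbatim from the corresponding part of the proof of Theorem~\ref{thm:sta_exist}.

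On bordered flows, the formula from Definition~\ref{def:a-formula} cannot be copied without modification because the proof of property \ref{itm:a2} for $(++)$ relied on the fact that at most one sign flip occurs along the unlabeled edge of a bordered triangle, a feature of $\ZZ_{++}$ that fails for $\ZZ_{+-}$ and $\ZZ_{-+}$ (where for example the $\rho_{123}$-subarc carries the pattern $+,-,-,+$ and hence has two flips). To compensate, I would introduce a type-dependent correction factor $\eta_{\rho}\in\{\pm 1\}$ for each of the six non-trivial types of bordered flow, and set $\STAp(\psi)=\eta_{\rho}$ when $\e_n(\x)=\e_n(\y)$ and $\STAp(\psi)=\eta_{\rho}\sign(\sigma_{\x})\e_n(\x)$ otherwise. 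The $\eta_{\rho}$'s must then be chosen so that property \ref{itm:a2} holds for each of the four bordered triangle relations arising from $\rho_1\rho_2=\rho_{12}$, $\rho_2\rho_3=\rho_{23}$, $\rho_1\rho_{23}=\rho_{123}$, and $\rho_{12}\rho_3=\rho_{123}$.

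With the $\eta_{\rho}$'s in hand, property \ref{itm:a1} transfers unchanged: in a bordered square the two bordered flows share the same decorated domain near the boundary and therefore carry the same $\rho$-type, so the $\eta_{\rho}$ factors cancel in the product $\STAp(\phi_1)\STAp(\phi_4)$ and the remainder of the case analysis from Theorem~\ref{thm:sta_exist} applies directly. The main obstacle will be verifying that the four sign equations imposed on the six unknowns $\eta_{\rho_1},\eta_{\rho_2},\eta_{\rho_3},\eta_{\rho_{12}},\eta_{\rho_{23}},\eta_{\rho_{123}}$ by property \ref{itm:a2} are simultaneously solvable for each of the three sign sequences $P'\ne(++)$; this reduces to a cocycle-type compatibility on the discrepancies computed from the $\pm$-pattern on $\ZZ_{P'}$ drawn in Figure~\ref{fig:s-pmc}, which I expect to confirm by a short direct calculation triangle by triangle.
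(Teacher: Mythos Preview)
Your proposal is correct but takes a genuinely different route from the paper. The paper's argument is a transport-of-structure: it defines a bijection $p'$ from formal flows for $P'$ to formal flows for $P=(++)$ by flipping the orientation of each $\alpha_i^a$ for which the $i^{\text{th}}$ coordinate of $P'$ is not $+$, and then sets $S_{P'}\coloneqq \STA\circ p'$. Since $p'$ manifestly carries $\alpha$-degenerations, $\beta$-degenerations, internal squares, bordered squares, and bordered triangles to configurations of the same kind, properties \ref{itm:s1a}--\ref{itm:a2} are inherited immediately, with no case analysis whatsoever.

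Your approach instead reruns the construction: keep Definition~\ref{def:a-formula} on internal flows, modify the bordered formula by multiplicative factors $\eta_{\rho}$, and solve for the $\eta_{\rho}$ using the four triangle relations. This works. Your observation that the proof of \ref{itm:a1} in Theorem~\ref{thm:sta_exist} never invokes the specific sign pattern of $\ZZ_{++}$ (it only uses the raw formula and structural facts about squares) is correct, so the $\eta_{\rho}$ factors do cancel there. For \ref{itm:a2}, the system $\eta_{\rho_i}\eta_{\rho_j}\eta_{\rho_k}=D(i,j,k)$ for the four triangles is solvable precisely when $\prod D = 1$; a direct check of the $\pm$-patterns from Figure~\ref{fig:s-pmc} confirms this holds for each $P'$ (for $(--)$ all $D=1$; for $(+-)$ and $(-+)$ exactly the two triangles through $\rho_{123}$ contribute $-1$). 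You should record this verification rather than leave it as an expectation, but it is indeed short.

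What each approach buys: the paper's bijection is shorter and conceptually cleaner, avoiding any recomputation. Your approach is more laborious but yields an explicit formula for $\STAp$ and, through the $\eta_{\rho}$ freedom (six unknowns, four constraints), already exhibits the two-parameter family of sign assignments that the paper later classifies in Proposition~\ref{prop:fourA}. Note also that your $\STAp$ and the paper's $S_{P'}$ are in general different sign assignments (since $p'$ alters $\e_n$-values and hence the internal part as well), though of course both are valid.
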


\begin{proof}
Suppose $P'\neq (++)$. Then we construct a bijection $p'$ on the set of formal right generators and flows for $P'$ to the set of formal right generators and flows for $P$. in the following way. Let $\x$ be a formal generator with $\s(\x)=i$. If the $i^{th}$ coordinate of $P'$ is not $+$, map $\x$ to the generator that only differs from $\x$ in the sign of $\epsilon_n(\x)$; otherwise map $\x$ to itself. We can think of this as merely flipping the orientation of $\alpha_i^a$ in the graphical presentation of $\x$. In the same vein, let $\phi$ be a formal flow. For each coordinate of $P'$ that is not $+$, flip the orientation of the corresponding $\alpha_i^a$ in the graphical presentation of $\phi$, i.e.\ if the first coordinate of $P'$ is not $+$, flip the orientation of $\alpha_1^a$ and so on. Let the resulting flow be the image of $\phi$ under the bijection. 

Define a function $S_{P'}$ by the composition $\STA \circ p'$, where $\STA$ is the sign assignment defined above. We observe that, by construction, if a set of flows forms an $\alpha$-degeneration, a $\beta$-degeneration, an internal square, a bordered square, or a bordered triangle, then so does the image of that set of flows under $p'$. For example, if $(\phi_1,\phi_2), (\phi_3,\phi_4)$ form an internal square, then $(p'(\phi_1),p'(\phi_2)),(p'(\phi_3),p'(\phi_4))$ also form an internal square. Then since $\STA$ satisfies \ref{itm:s1a} through \ref{itm:a2}, it follows that $S_{P'}$ satisfies these properties as well. Thus $S_{P'}$ is a bordered sign assignment of type $A$ for the sign sequence $P'$.
 \end{proof}

\subsection{Equivalence classes of type $A$ sign assignments}
\label{ssec:a-unique}

In this section we discuss how the behavior of any given bordered sign assignment can be extrapolated from a small set of choices, and then show how the set of possible choices breaks down into different equivalence classes.

\begin{prop} \label{prop:choices}
A bordered sign assignment $S$ of type $A$ is determined by three choices on three specific bordered flows, and a choice of internal sign assignment on each idempotent.
\end{prop}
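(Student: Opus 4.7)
The plan is to show that the five axioms \ref{itm:s1a}--\ref{itm:a2} determine $\STA$ on all of $\FnA^R$ once we have specified: (a) an internal sign assignment for each idempotent value $\s\in\{1,2\}$, and (b) the value $\STA(\psi_i)\in\{\pm 1\}$ for one chosen bordered flow $\psi_i$ of each elementary type $\rho_i$, $i=1,2,3$. Axioms \ref{itm:s1a}--\ref{itm:s3a} will propagate (a) to all internal flows; axiom \ref{itm:a1} will propagate (b) to all remaining bordered flows of types $\rho_1$, $\rho_2$, $\rho_3$; and axiom \ref{itm:a2} will then pin down the signs of bordered flows of the composite types $\rho_{12}$, $\rho_{23}$, $\rho_{123}$.

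First I would handle the internal flows. Because the idempotent $\s$ is preserved along any internal flow, the internal flows in $\FnA^R$ split as a disjoint union indexed by $\s\in\{1,2\}$. On each piece, Definition~\ref{def:int-sa} gives $\STA(\phi)=S(f(\phi))$ where $S$ is a closed sign assignment on $\gens_{n+1}$ and $f=f_{\s}$ is the injection from Section~\ref{ssec:a-exist}. Choosing one $S$ per idempotent is precisely ``a choice of internal sign assignment on each idempotent,'' and the closed axioms \ref{itm:s1a}--\ref{itm:s3a} are exactly the pullback of the closed sign-assignment axioms from \cite{hfz} under $f$; the uniqueness proven there then fixes $\STA$ on every internal flow.

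Next, for each $i\in\{1,2,3\}$ I would propagate the single bordered choice $\STA(\psi_i)$ to every other bordered flow $\psi_i'$ of type $\rho_i$. Any two such flows differ only in non-moving data (a permutation entry, a non-moving $\e$-sign, or the relabelling at the moving coordinate) and in the sign $\e_n$ at the moving coordinate. Each such discrepancy can be bridged by a bordered square whose two bordered corners are of type $\rho_i$ and whose remaining corners are internal flows, obtained by juxtaposing the bordered half-strip with an internal bigon or rectangle in the complementary coordinates (cf.\ the square configurations of Figure~\ref{fig:z-bord-sq} and the bordered analogue of Definition~\ref{def:sflip}). Applying \ref{itm:a1} inductively expresses $\STA(\psi_i')$ in terms of $\STA(\psi_i)$ and the already-fixed internal signs. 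Finally, every bordered flow of type $\rho_{12}$ fits into a bordered triangle with two flows of types $\rho_1$ and $\rho_2$; likewise $\rho_{23}$ with $\rho_2,\rho_3$, and $\rho_{123}$ with $\rho_1,\rho_{23}$. Axiom \ref{itm:a2} then forces the signs on flows of the three composite types.

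The main obstacle will be the bordered propagation step: I need to verify that the value computed for $\STA(\psi_i')$ is independent of the chain of bordered squares used to reach it, i.e.\ that every closed loop of \ref{itm:s1a}--\ref{itm:a2} relations produces a consistent sign. This is a cocycle-type condition. Rather than checking it on diagrams, I would invoke the existence of $\STA$ from Theorem~\ref{thm:sta_exist} (extended to arbitrary sign sequences by Lemma~\ref{lem:existA}): for any legal choice of (a) and (b), the construction there produces at least one $\STA$ realizing those choices, and the deterministic propagation outlined above then shows that any two bordered sign assignments agreeing on (a) and (b) must agree everywhere. Combining these shows that $\STA$ is determined by the listed data, as claimed.
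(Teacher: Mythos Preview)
Your approach is essentially the same as the paper's: fix internal signs via the chosen internal sign assignments, propagate the three bordered choices through all $\rho_1,\rho_2,\rho_3$ flows using bordered squares \ref{itm:a1}, and then use bordered triangles \ref{itm:a2} to reach the composite types $\rho_{12},\rho_{23},\rho_{123}$.

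The one point of divergence is your ``main obstacle'' paragraph about path-independence of the propagation. This concern is unnecessary for the proposition as stated: you are given a bordered sign assignment $S$ and must show that its values elsewhere are forced by the specified data. Since $S$ already satisfies \ref{itm:s1a}--\ref{itm:a2}, any chain of relations you write down computes the actual value $S(\psi_i')$, so different chains automatically agree. The paper's proof simply exhibits one chain and stops. Your detour through Theorem~\ref{thm:sta_exist} and Lemma~\ref{lem:existA} to resolve consistency is valid but superfluous; it would only be needed if the proposition claimed that \emph{every} tuple of choices is realized by some sign assignment, which it does not.
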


\begin{proof}
By definition, a choice of internal sign assignment determines the sign of all internal flows on that idempotent, so it remains to show that the sign for all bordered flows can be deduced from the choices made. Let $c_1(S) \in \{\pm 1\}$ be the sign of the bordered flow $\phi_1$ of type $\rho_1$ with starting generator $\x=\{\id, \mathbf{1}, 1\}$, $c_2(S)$ be the sign of the bordered flow $\phi_2$ of type $\rho_2$ with starting generator $\x=\{\id, \mathbf{1}, 2\}$, and $c_3(S)$ be the sign of the bordered flow $\phi_3$ of type $\rho_3$ with starting generator $\x=\{\id, \mathbf{1}, 1\}$. Then for any bordered flow $\phi_i'$ of type $\rho_i$ whose starting generator differs by a single transposition in the permutation or by a single coordinate in the sign profile, there exists a bordered square involving $\phi_i,\phi_i',$ and two internal flows (rectangles in the case of the transposition, or bigons in the case of the sign profile change). Then by using property \ref{itm:a1} of a bordered sign assignment and the fact that $S(\phi_i)$ and the signs of the internal flows are all known, we can deduce the value of $S(\phi_i')$. By allowing further transpositions and sign profile changes, we can inductively deduce the sign of any bordered flow of type $\rho_1,\rho_2,$ or $\rho_3$. Then any flow of type $\rho_{12}$ can be built as a composition of flows of type $\rho_1$ and $\rho_2$ using a bordered triangle. Then property \ref{itm:a2} and the fact that all flows of type $\rho_1$ and $\rho_2$ are now known allows us to deduce the signs of type $\rho_{12}$ flows. Flows of type $\rho_{23}$ can be handled similarly using bordered triangles involving flows of type $\rho_2$ and $\rho_3$. Lastly since we know the signs of flows of type $\rho_{12}$ or $\rho_{23}$, we can build bordered triangles with a type $\rho_{12}$ and $\rho_3$ (or a type $\rho_1$ and $\rho_{23}$ to deduce the sign of any flow of type $\rho_{123}$.
\end{proof}

Note that we will omit the sign assignment from the notation when it is clear from context or unnecessary, and write either $c_i$ or $c_i^A$ depending on whether or not it is relevant that it is a type $A$ sign assignment being discussed.

\begin{prop}\label{prop:fourA}
There are four bordered sign assignments of type $A$ and power $n$ for any given signed matched pointed circle and any choice of $\mathbb{Z}/2$ grading, up to gauge equivalence.
\end{prop}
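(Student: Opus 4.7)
The plan is to turn Proposition~\ref{prop:choices} into an orbit count. By that proposition, any bordered type $A$ sign assignment $S$ is determined by a triple $(c_1(S), c_2(S), c_3(S))\in\{\pm 1\}^3$ together with a choice of internal sign assignment on each of the two idempotents, so the task becomes describing the gauge action on this data and showing the orbit space has size four.

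The first step is to normalize the internal data. Internal flows preserve the idempotent $\s$, so the two internal sign assignments---on $\GnA^{1,R}$ and $\GnA^{2,R}$---are independent, and via the injection $f_\s$ of Section~\ref{ssec:a-exist} each corresponds to a closed sign assignment on a subset of $\gens_{n+1}$. Because closed sign assignments are unique up to gauge equivalence by~\cite{hfz}, one can apply a gauge supported separately on each $\GnA^{\s,R}$ to fix both internal parts to chosen reference sign assignments. This reduces the problem to classifying $(c_1,c_2,c_3)$ modulo the residual subgroup $H$ of gauges preserving the normalized internal data.

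The second step is to identify $H$ and its effect on the $c_i$. A gauge $u$ lies in $H$ exactly when $u(\x)=u(\y)$ for every pair joined by an internal flow. Transferring the connectivity statement from the closed case via $f_\s$ shows that each $\GnA^{\s,R}$ is connected under internal flows, so $u$ is a constant $a\in\{\pm 1\}$ on $\GnA^{1,R}$ and a constant $b\in\{\pm 1\}$ on $\GnA^{2,R}$, yielding $H\cong\{\pm 1\}^2$. Each of the distinguished flows $\phi_1,\phi_2,\phi_3$ joins a generator of $\GnA^{1,R}$ to one of $\GnA^{2,R}$---the $\rho_1$ and $\rho_3$ flows go from idempotent $1$ to idempotent $2$ while the $\rho_2$ flow goes the other direction---so $u=(a,b)$ transforms each $c_i$ by multiplication by $ab$. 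The action therefore factors through $\{\pm 1\}^2/\langle(-1,-1)\rangle\cong\{\pm 1\}$, whose non-trivial element simultaneously negates $c_1,c_2,c_3$. The eight triples of $\{\pm 1\}^3$ split into four orbits of size two, as desired.

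The main obstacle I expect is verifying the internal-flow connectivity used to pin down $H$: one must check that any two generators in $\GnA^{\s,R}$ can be joined by a sequence of internal bigons and rectangles (with the $n^{\text{th}}$ coordinate always labelled $\alpha_\s^a$), so that the stabilizer of the fixed internal sign assignment on each idempotent reduces to constants. This should follow by adapting the closed-case connectivity argument of~\cite{hfz} through the map $f_\s$, but ensuring that the restriction on the $n^{\text{th}}$ coordinate does not break connectivity is the one place that requires genuine combinatorial care.
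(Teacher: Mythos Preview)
Your approach is correct and matches the paper's in substance: both first normalize the internal sign assignments using the closed-case uniqueness from \cite{hfz}, then analyze the residual freedom in $(c_1,c_2,c_3)$. The paper proceeds more concretely---exhibiting the specific gauge $u(\x)=1$ if $\s(\x)=1$ and $u(\x)=-1$ otherwise to collapse eight triples to four, and then proving distinctness by producing, for any two candidate classes, a $\rho_{12}$ (or $\rho_{23}$) flow and an internal bigon with the same endpoints, which force contradictory values of $u(\x)u(\y)$---whereas you obtain both bounds at once from the orbit computation under $H\cong\{\pm1\}^2$. The connectivity of each $\GnA^{\s,R}$ under internal flows that you flag is elementary (any transposition is realized by an internal rectangle and any single sign flip by an internal bigon, with the $n^{\text{th}}$ coordinate causing no obstruction since $\alpha_\s^a$ is a perfectly good label for a moving edge), and it is exactly what underlies the paper's distinctness argument as well; your caution there is appropriate but the check is routine.
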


\begin{proof}
In \cite{hfz}, it was shown that all closed sign assignments of a given power are gauge equivalent. Therefore, all internal sign assignments (of the same power) are also gauge equivalent.

Thus it remains to discuss the eight cases that arise from the tuple of three choices $(c_1, c_2, c_3)$ on bordered flows. Let us assume that we fix a consistent internal sign assignment, which can be done by applying the aforementioned gauge equivalence. Consider the function $u:\GnA \to {\pm1}$ defined by $u(\x)=1$ if $s(\x)=1$ and $u(\x)=-1$ otherwise. Then we wish to show this map is a gauge equivalence between the two sign assignments $S$ and $S'$ given by any tuple of choices and its negation, e.g. $(+,-,+)$ and $(-,+,-)$. 

First, note that for any internal flow $\phi: \x \to \y$ we have $s(\x)=s(\y)$, and so $u(\x)S(\phi)u(\y)=S'(\phi)$. Then observe that all flows of types $\rho_1, \rho_2, \rho_3,$ and $\rho_{123}$ move between generators on different idempotents, and for any one of these flows $\phi$, we have $S(\phi)=-S'(\phi)$ because of our choice of tuples and property \ref{itm:a2}. However since for any given $\phi:\x \to \y$ in this set of flows we have $s(\x)=-s(\y)$, we know $u(\x)u(\y)=-1$. Therefore $u(\x)S(\phi)u(\y)=-S(\phi)=S'(\phi)$. Now let $\phi_1, \phi_2,$ and $\phi_{12}$ respectively be flows of type $\rho_1, \rho_2,$ and $\rho_{12}$ that form a bordered triangle. Then if $\phi_{12}$ is a flow from $\x$ to $\y$, we know $u(\x)u(\y)=1$ (because $\x$ and $\y$ are on the same idempotent) and thus $u(\x)S(\phi_{12})u(\y)=S(\phi_{12})=S(\phi_1)S(\phi_2)=(-S'(\phi_1))(-S'(\phi_2))=S'(\phi_{12})$. A similar argument holds for flows of type $\rho_{23}$.   Thus $u$ is a gauge equivalence between $S$ and $S'$. 

This leaves four classes to examine, which can be classified by what they do on flows of type $\rho_{12}$ and $\rho_{23}$. Recalling property \ref{itm:a2} and that $c_1,c_2$ and $c_3$ are the signs for flows of type $\rho_1,\rho_2$ and $\rho_3$ with specific starting generators, we can see that the flow of type $\rho_{12}$ with starting generator $\x=(\id,\mathbf{1},1)$ has sign $c_1c_2$ and similarly the flow of type $\rho_{23}$ with starting generator $\x=(\id,\mathbf{1},2)$ has sign $c_2c_3$. We can thus reframe our presentation of $S$ as the tuple of choices $(c_1,c_2,c_3)$ to instead be given by the pair $(c_1c_2,c_2c_3)$, which is indeed invariant under negation of all three $c_i's$. Then our four potential classes can then be represented by the four tuples $\{(+,+),(+,-),(-,+),(-,-)\}$, and we will denote these classes respectively by $S_{++},S_{+-},S_{-+},$ and $S_{--}$.

We wish to show that these four classes are all distinct under gauge equivalence, i.e. no two of them are gauge equivalent. Consider the two classes $S_{++}$ and $S_{-+}$. As representatives choose two sign assignments $S_1$ and $S_2$ that have the same internal sign assignment (again by applying the gauge equivalence defined in \cite{hfz} if necessary) and are generated by the sign  tuples $(c_1(S_1), c_2(S_1), c_3(S_1)) = (+,+,+)$ and $(c_1(S_2),c_2(S_2),c_3(S_2))=(+,-,-)$ respectively. Let $\phi_1$ and $\phi_2$ be bordered flows of type $\rho_{12}$ whose starting generators differ by a transposition $(i \, j)$. Then there exist a pair of internal rectangles $R_{ij},R_{ij}'$ that form a bordered square with $\phi_1$ and $\phi_2$. Then we also know there exists bigons $b_1$ and $b_2$ that form an internal square with $R_{ij}$ and $R_{ij}'$ such that $b_1$ and $b_2$ has the same starting and ending generators as $\phi_1$ and $\phi_2$ respectively. 

Suppose there is a gauge equivalence $u$ between $S_1$ and $S_2$. Let $\x$ and $\y$ be the starting and ending generators of $\phi_1$. Then since $S_1(\phi_1)=-S_2(\phi_1)$, we must have that $u(\x)u(\y)=-1$. However, since $S_1$ and $S_2$ agree on all internal flows, we also know that $S_1(b_1)=S_2(b_1)$ and thus we must have that $u(\x)u(\y)=1$. This is a contradiction and so $S_1$ and $S_2$ are not equivalent. Similar arguments hold for each other pair of classes among $\{S_{++}, S_{+-}, S_{-+}, S_{--}\}$, and thus none of these four classes are equivalent.
\end{proof}
 
 When needed, we will think of the class of bordered sign assignment $\STA$ of type $A$ as being given by the tuple $(c_1(\STA)c_2(\STA), c_2(\STA)c_3(\STA))$ as this tells us both the general class and the specific behavior.

\subsection{The type $A$ structure $\cfahat$ with signs}
\label{ssec:a-def}

 \begin{defn}
 Let $\HD = (\Sigma, \alpha,\beta, z)$ be a nice Heegaard diagram for a bordered 3-manifold $Y$ with torus boundary. Fix an ordering and orientation on  the $\alpha$-circles, $\beta$-circles, and $\alpha$-arcs. Let $P$ be the sign sequence for which $\bdy \HD$ can be identified  with $\ZZ_P$ (so that each $\alpha$-arc is oriented from a $-$ to a $+$). The generators of the diagram $\HD$ naturally define formal right bordered generators of power $|\beta|$, and the empty bigons and rectangles on $\HD$ (counted by the multiplication maps for $\cfahat(\HD)$) specify formal right  flows of power $|\beta|$.  We further fix a choice $C \in \{+,-\}^2$, and let $\STA$ be a representative of the class $S_C$.

We define a right type $A$ structure $\cfahat_{C,P}(\HD;\Z)$ over $\alg_P$ as follows. Let $\cfahat_{C,P}(\HD;\Z)$ be the $\Z$-module freely generated by the generators $\mathcal G(\HD)$ of the Heegaard diagram. We give $\cfahat_{C,P}(\HD;\Z)$ the structure of a right $\mathcal I_P$-module by setting

\[\x\cdot \iota_t = \begin{cases}
\x \quad \textrm{ if } t = o(\x) \\
0 \quad \textrm{otherwise.}
\end{cases}
\]

A type $A$ structure on $\cfahat_{C,P}(\HD;\Z)$ is  a family of maps
\[\mza_{n+1}\colon \cfahat_{C,P}(\HD;\Z)\otimes_{\mathcal I_P} \alg_P\otimes_{\mathcal I_P} \cdots \otimes_{\mathcal I_P} \alg_P\to \cfahat_{C,P}(\HD;\Z) \]
satisfying Equation~\ref{eqn:Arelns}.

We define these maps by counting the same regions on $\HD$ as for  $\cfahat(\HD)$ (i.e.\ empty bigons and rectangles), but this time with sign. Namely, for $\x\in \mathcal G(\HD)$ and $a\in \alg_P$ of form $a = \rho_I$, we define
\begin{align*} 
\mza_1(\x)&= \sum_{ \y \in \mathbb{T}_\alpha \cap \mathbb{T}_\beta} \ \sum_{\phi \in \mathrm{Flows}_{\mathrm{int}}(\x,\y)} \ \STA(F(\phi)) \cdot \y \\
  \mza_2(\x, a)&= \sum_{ \y \in \mathbb{T}_\alpha \cap \mathbb{T}_\beta} \ \sum_{\phi \in \mathrm{Flows}_{a}(\x,\y)} \ \STA(F(\phi)) \cdot \y \\
    \mza_2(\x, \bf{I})&= \x\\
    \mza_{n+1} &= 0 \quad \textrm{ if $n>1$,}
  \end{align*}
  where $\bf{I}$ is the unit $\iota_1+\iota_2\in \alg_P$.
Here,  $\mathrm{Flows}_{\mathrm{int}}(\x,\y)$ and $\mathrm{Flows}_{a}(\x,\y)$ are the sets of internal and bordered flows, respectively,  between $\x$ and $\y$ on $\HD$, and $F(\phi)$ is the formal flow in $\FnA^R$ that corresponds to $\phi$. Note that if $\phi$ contributes to $\mza_2(\x,a)$, then $F(\phi)$ is a formal flow of type $a$.
 \end{defn}
 
 \begin{thm}
  The data $(\cfahat_{C,P}(\HD;\Z), (\mza_1, \mza_2))$ forms a type $A$ structure over $\alg_P$.
 \end{thm}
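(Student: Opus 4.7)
The plan is to verify the $\Ainf$-relations from Equation~(\ref{eqn:Arelns}) for $(\cfahat_{C,P}(\HD;\Z), \mza_1, \mza_2)$. Since $\mza_k = 0$ identically for $k \geq 3$ by construction, and since the algebra differential $\mu_1$ vanishes on $\alg_P$ (the torus algebra has no differential), Equation~(\ref{eqn:Arelns}) for $n \geq 4$ reduces to $0=0$, and for $n \in \{1,2,3\}$ yields, after cancelling vanishing terms, precisely the three conditions
\begin{align*}
\mza_1 \circ \mza_1 = 0, \quad \mza_1 \circ \mza_2 = \mza_2 \circ (\mza_1 \otimes \id), \quad \mza_2 \circ (\mza_2 \otimes \id) = \mza_2 \circ (\id \otimes \mu_2).
\end{align*}

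For $\mza_1^2 = 0$, I would expand as a sum over composable pairs of internal flows on $\HD$. By Definition~\ref{def:a-formula}, the restriction of $\STA$ to internal flows factors as $S \circ f$, where $S$ is a closed sign assignment in the sense of \cite{hfz} and $f$ embeds internal flows into closed formal flows in $\mathcal F_{n+1}$. The standard cancellation argument then applies: composable pairs of empty bigons and rectangles either form an internal square and cancel in pairs by \ref{itm:s3a}, or would correspond to boundary degenerations which are not realized by pairs of empty regions on a nice Heegaard diagram (the topological argument from \cite{swnice,hfz} transports directly to our setting via $f$).

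For the two remaining relations, I would expand both sides as sums over pairs (respectively, single flows) on $\HD$. Each composable (bordered, internal) pair $(\phi_1, \phi_2)$ contributing to $\mza_1 \circ \mza_2$ matches with an (internal, bordered) pair $(\phi_3, \phi_4)$ contributing to $\mza_2 \circ (\mza_1 \otimes \id)$, and together the two decompositions of the underlying $2$-chain form a bordered square in the sense of Definition~\ref{def:bs}; coefficient-wise equality then follows immediately from \ref{itm:a1}. Analogously, each composable (bordered, bordered) pair of types $(a,b)$ on $\HD$ glues along the shared $\alpha^a$-edge determined by the common idempotent into a single bordered flow of type $a\cdot b$, with the resulting triple forming a bordered triangle as in Definition~\ref{def:tri}, and \ref{itm:a2} then gives the desired equality.

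The main obstacle will be the combinatorial bookkeeping: one must verify that every composable pair of empty Heegaard flows on $\HD$ falls into exactly one of the formal configurations (internal square, bordered square, or bordered triangle), that the correspondence between Heegaard decompositions and formal square/triangle configurations is a bijection, and that no spurious pairings (e.g.\ exotic overlaps or $\beta$-type boundary degenerations) contribute. For internal pairs this is handled by the nice-diagram analysis of \cite{swnice}; for the bordered cases the novelty is tracking the behavior along $\partial\HD$, which is controlled by the geometry of the signed pointed matched circle $\ZZ_P$ and the orientation conventions on the $\alpha^a$-arcs fixed before the definition of $\cfahat_{C,P}(\HD;\Z)$.
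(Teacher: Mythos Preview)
Your proposal is correct and takes essentially the same approach as the paper: reduce to the three displayed relations (using $m_{k\geq 3}=0$ and $\mu_1=0$ on $\alg_P$) and then invoke axioms \ref{itm:s3a}, \ref{itm:a1}, and \ref{itm:a2} respectively; the paper's proof is simply terser, citing the axioms without spelling out the Heegaard-diagram-level matching you describe. One small slip: your claim that $\STA$ restricted to internal flows factors as $S\circ f$ appeals to Definition~\ref{def:a-formula}, which only constructs one particular sign assignment for $P=(++)$, whereas in the theorem $\STA$ is an arbitrary representative of the class $S_C$; since your actual cancellation argument only uses axiom \ref{itm:s3a} (which holds by definition for any type $A$ bordered sign assignment), this does not affect the validity of the proof.
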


 \begin{proof}
Since the  higher multiplication maps are identically zero, we only need to verify these three relations 
 \begin{align*}
 (\mza_1)^2(\x) &= 0\\
\mza_1(\mza_2(\x,a)) &= \mza_2(\mza_1(\x),a) + (-1)^{\mid \x\mid}\mza_2(\x, d_{\alg_P}(a))\\
\mza_2(\x, a\cdot b) &= \mza_2(\mza_2(\x,a), b).
 \end{align*}
 The first relation follows from the fact that the sign assignment $\STA$ satisfies \ref{itm:s3a}. 
For the torus algebra, the differential operator $d_{\alg_P}$ is trivial, so the second relation simplifies to $\mza_1(\mza_2(\x,a)) = \mza_2(\mza_1(\x),a)$; this simplified relation follows from  \ref{itm:a1}. The third relation follows from \ref{itm:a2}.
\end{proof}

\begin{thm}\label{thm:indA}
Let $\HD$ be a bordered diagram for manifold with torus boundary. Fix a choice $C \in \{+,-\}^2$ and a sign sequence $P$. Then the structure $\cfahat_{C,P}(\HD;\Z)$ does not depend on the ordering and orientation of the curves, nor on the choice of representative $\STA$ of the class $S_C$.
\end{thm}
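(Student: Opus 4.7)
My plan is to split the theorem into two independent claims.

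\textbf{Independence from the choice of representative of $S_C$.} Suppose $\STA$ and $\STA'$ are two representatives of the class $S_C$, related by a gauge equivalence $u : \GmA^R \to \{\pm 1\}$, so that $\STA(\phi) = u(\x)\,\STA'(\phi)\,u(\y)$ for every $\phi : \x \to \y$. The $\Z$-linear map $\Phi(\x) = u(\x)\,\x$ is a bijection on generators; using $u(\x)^2 = 1$ and substituting the gauge identity into the formulas defining $\mza_1$ and $\mza_2$ shows that $\Phi$ intertwines the structure maps, so it is an isomorphism of type $A$ structures. This is the straightforward part.

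\textbf{Independence from ordering and orientation.} Any change of ordering or orientation decomposes into elementary moves: transposing two adjacent $\alpha^c$-circles, transposing two adjacent $\beta$-circles, or reversing the orientation of a single $\alpha^c$- or $\beta$-circle. (The orderings and orientations of the $\alpha^a$-arcs are fixed by $P$ and the basepoint convention.) For each elementary move $\mathcal{O} \rightsquigarrow \mathcal{O}'$ the set of generators of $\HD$ is unchanged, but the assignment $F$ from actual flows on $\HD$ to formal flows is altered. I would produce a sign correction $\mu : \mathcal{G}(\HD) \to \{\pm 1\}$ such that the bijection $\x \mapsto \mu(\x)\,\x$ intertwines the two structure maps, equivalently
\[\mu(\x)\cdot \STA(F_{\mathcal{O}}(\phi)) = \STA(F_{\mathcal{O}'}(\phi))\cdot \mu(\y)\]
for every flow $\phi : \x \to \y$ on $\HD$.

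To produce $\mu$, I would argue abstractly. Each elementary move sends $\alpha$- and $\beta$-degenerations, internal squares, bordered squares, and bordered triangles to objects of the same kind, so both $\STA\circ F_{\mathcal{O}}$ and $\STA\circ F_{\mathcal{O}'}$ satisfy axioms \ref{itm:s1a}--\ref{itm:a2} when viewed as functions on flows in $\HD$. Once we verify that the two pulled-back assignments produce the same invariant pair $(c_1 c_2,\,c_2 c_3)$, the existence of $\mu$ follows from the gauge-equivalence argument given in the proof of Proposition~\ref{prop:fourA}. The main obstacle is precisely this last check: for each of the four elementary moves, one must track how it modifies the permutations $\sigma_{\x}$ and sign profiles $\e(\x)$ of the three distinguished generators from Proposition~\ref{prop:choices}, and use the explicit formula for $\STA$ on internal flows (via the map $f$ from Section~\ref{ssec:a-exist}) together with the invariance of the closed sign assignment of \cite{hfz} under relabeling and reorientation in formal flows. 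I expect $\mu$ to be a constant sign for the transposition moves, and for an orientation reversal to depend on whether $\x$ intersects the reversed curve and on the corresponding entry of $\e(\x)$.
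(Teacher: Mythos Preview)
Your proposal is correct and follows essentially the same route as the paper. For representative independence, the paper does exactly what you describe: the map $\x\mapsto u(F(\x))\,\x$ is the desired isomorphism. For orientation and ordering independence, the paper organizes the argument slightly more cleanly than you do, in a way worth noting: instead of directly producing the gauge $\mu$, it defines the bijection $h$ on \emph{formal} flows induced by the elementary move (e.g.\ flip the orientation of the formal arc labeled $\alpha_1$), observes that the structure maps computed with $(S,o)$ and with $(S\circ h,o')$ are literally equal, and then invokes the already-proven representative independence to compare $(S\circ h,o')$ with $(S,o')$. This reduces everything to the single claim that $S\circ h$ lies in the class $S_C$---which is precisely the ``same $(c_1c_2,\,c_2c_3)$'' check you flag as the main obstacle---and spares you from ever writing down $\mu$ explicitly. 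The paper, like your proposal, asserts this last check (``by an argument similar to Lemma~\ref{lem:existA}'') without carrying it out in detail.
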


\begin{proof}
First fix an ordering and orientation of the curves of the diagram $\HD$. Suppose $S$ and $S'$ are two different representatives of the class $S_C$, giving rise to maps $(m_1^S,m_2^S)$ and $(m_1^{S'},m_2^{S'})$ respectively. Since $S$ and $S'$ are representatives of the same class, then by Proposition~\ref{prop:fourA} there must exist some gauge equivalence $u$ between them. Then we define a linear map $H:\cfahat_{C,P}(\HD;\Z) \to \cfahat_{C,P}(\HD;\Z)$ on the underlying $\Z$-modules by $H(\x)=u(F(\x))\cdot \x$, where $F(\x)$ is the formal generator associated to the Heegaard diagram generator $\x$. Clearly$H$ induces an isomorphism between $(\cfahat_{C,P}(\HD;\Z),(m_1^S,m_2^S))$ and $(\cfahat_{C,P}(\HD;\Z),(m_1^{S'},m_2^{S'})$. Thus we have independence from the choice of representative.

Now suppose we have a fixed representative $S$ of $S_C$, and a fixed order of the curves. However, let us fix two different orientations. Since the sign sequence $P$ is fixed, these two orientations cannot differ on the $\alpha$-arcs. For simplicity, we will assume the orientations only differ on one curve, say $\alpha_1$. Then note that $\alpha_1$ corresponds to a specific curve in the set $\alphas$ used to define formal generators and flows; we will abuse notation and call this formal curve $\alpha_1$ too. We will notate the two orientations as $o$ and $o'$. 
 
 Define a map $h: \FnA \to \FnA$ on the set of formal flows by mapping a flow $\phi$ to the flow $\phi'$ which is graphically obtained from $\phi$ by flipping the orientation of $\alpha_1$. By an argument similar to Lemma~\ref{lem:existA}, the map $S \circ h$ is a sign assignment, and is also in the class $S_C$. Then we have the maps $(m_1^{S,o},m_2^{S,o})$ defined using the sign assignment $S$ and the orientation $o$ on the diagram, $(m_1^{S\circ h,o'},m_2^{S\circ h.o'})$ using $S \circ h$ and $o'$, and $(m_1^{S,o'},m_2^{S,o'})$ using $S$ and $o'$. We wish to show that $(m_1^{S,o},m_2^{S,o})$ and $(m_1^{S,o'},m_2^{S,o'})$ produce isomorphic structures. However, we notice that by construction $(m_1^{S\circ h,o'},m_2^{S\circ h.o'})$ and $(m_1^{S,o},m_2^{S,o})$ agree on all values, and thus give rise to isomorphic structures. Then since $S \circ h$ and $S$ are in the same class, $(m_1^{S,o'},m_2^{S,o'})$ and $(m_1^{S\circ h,o'},m_2^{S\circ h.o'})$ must also produce isomorphic structures by the argument used above. Thus we have shown independence of orientation.
 
 Finally, suppose we have fixed a representative $S$ of the class $S_C$ and an orientation on the curves. Let us fix two different orderings, noting once again the labels of the $\alpha$-arcs are fixed, and thus cannot differ between these two orderings. Then the permutation that maps one ordering to the other gives rise to a map $g: \FnA \to \FnA$ on the set of formal flows, and we can define a new sign assignment as the composition $S \circ g$. By an analogous argument to the above case of differing orientations, we conclude the independence of ordering.
\end{proof}


\section{Type $D$ sign assignments}
\label{sec:cfd}

\subsection{The existence of a type $D$ sign assignment}
\label{ssec:d-exist}

We begin with an important observation that while formal flows of type $\rho_{12}$ and $\rho_{23}$ do appear as legitimate holomorphic curves in the formal picture, they will never appear in a type $D$ structure derived from a nice diagram. Furthermore, they are identically zero under the type $D$ structure map for the torus algebra, and will never pair with a flow in a type $A$ structure when gluing two diagrams together (as both the starting and ending generator occupy the same idempotent and thus the same $\alpha$ curve in a closed diagram). Since they thus play no role in either the closed structures or the type $D$ structure itself (when working with nice diagrams), we disregard them in the construction and classification of type $D$ bordered sign assignments.

 Our first  step in defining a bordered sign assignment  $\STD$ of  type $D$ is to define $\STD$ on all formal internal flows. For $\s\in \{1,2\}$, let $\GnA^{\s, L}$ be the set of formal left bordered generators of form $(\sigma, \e, \s)$.  Define two injections $f_{\s}:\GnA^{\s, L}\to \gens_{n+1}$ as follows.

 If $\x = (\sigma, \e, \s)$, define $f_{\s}(\sigma)$
 to be the permutation that fixes $1$ and maps $i$ to $\sigma(i-1)+1$ for $i = 2, \ldots, n+1$. Define $f_{\s}(\e)=(1,\epsilon_1,\dots,\epsilon_n)$, i.e $f_{\s}(\e_i)=1$ if $i=1$ and $f_{\s}(\e)=\epsilon_{i-1}$ otherwise. Lastly, define $f_{\s}(\x) = (f_{\s}(\sigma), f_{\s}(\e))$.  

Graphically, for $\x\in \GnA^{\s, L}$ we think of $f_\s(\x)$ as the formal generator obtained from $\x$ by replacing the label $\alpha_\s^a$ with $\alpha_2$, and replacing each remaining label $\alpha_i$ with $\alpha_{i+1}$, for $i=2,\dots,n$. Each $\beta$ index $\sigma(i)$ is replaced with $\sigma(i)+1$, for $i=1, \dots, n$. Finally, we add one more cross with arcs labeled $\alpha_1$ and $\beta_1$, and orient the arcs so their intersection is positive (with the convention that the $\alpha$-arc comes first). See for example Figure~\ref{fig:f_gen_d}.

\begin{figure}[h]
\centering
  \labellist
  	\pinlabel \textcolor{red}{$\alpha_1^a$} at 160 735
	\pinlabel \textcolor{blue}{$\beta_{\sigma(1)}$} at 220 735
	\pinlabel \textcolor{red}{$\alpha_2$} at 275 735
	\pinlabel \textcolor{blue}{$\beta_{\sigma(2)}$} at 328 735
	\pinlabel $\dots$ at 352 775
	\pinlabel \textcolor{red}{$\alpha_{n-1}$} at 377 735
	\pinlabel \textcolor{blue}{$\beta_{\sigma(n-1)}$} at 450 735
	\pinlabel \textcolor{red}{$\alpha_n$} at 490 735
	\pinlabel \textcolor{blue}{$\beta_{\sigma(n)}$} at 550 735
	\pinlabel \textcolor{red}{$\alpha_1$} at 53 625
	\pinlabel \textcolor{blue}{$\beta_{1}$} at 108 625
	\pinlabel \textcolor{red}{$\alpha_2$} at 160 625
	\pinlabel \textcolor{blue}{$\beta_{\sigma(1)+1}$} at 220 625
	\pinlabel $\dots$ at 352 665
	\pinlabel \textcolor{red}{$\alpha_{3}$} at 275 625
	\pinlabel \textcolor{blue}{$\beta_{\sigma(2)+1}$} at 328 625
	\pinlabel \textcolor{red}{$\alpha_n$} at 377 625
	\pinlabel \textcolor{blue}{$\beta_{\sigma(n-1)+1}$} at 435 625
	\pinlabel \textcolor{red}{$\alpha_{n+1}$} at 500 625
	\pinlabel \textcolor{blue}{$\beta_{\sigma(n)+1}$} at 550 625
   \endlabellist
 \includegraphics[scale=.7]{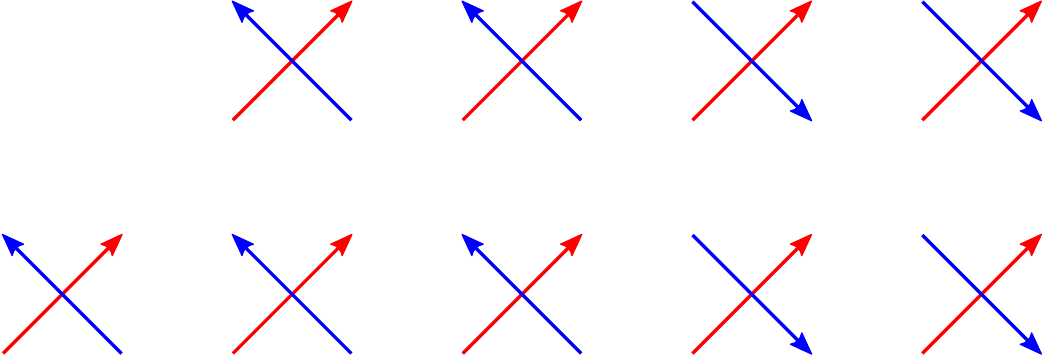} 
      \vskip .2 cm
       \caption{Top: A formal bordered generator $\x\in \GnA^{1, L}$. Bottom: The formal generator $f_1(\x)\in \gens_{n+1}$. (Here $n=4$.)}\label{fig:f_gen_d}
\end{figure}

 Next, we extend the maps $f_\s$ to internal flows. 
Note that if  $\phi$ is an internal flow from $\x = (\sigma_1, \e_1, \s_1)$ to $\y= (\sigma_2, \e_2, \s_2)$, then $\s_1 = \s_2$, so simply write $\s$ for $\s_1$ or $\s_2$. We define $f(\phi)$ as the formal flow from $f_{\s}(\x)$ to $f_{\s}(\y)$ that is obtained from $\phi$ graphically as follows. Relabel $\alpha_{\s}$ to $\alpha_2$, shift the remaining $\alpha$ and $\beta$ labels as above, and add a cross with arcs labeled $\alpha_{1}$ and $\beta_{1}$ and oriented so their intersection is positive. See for example Figure~\ref{fig:f_flow_d}.

\begin{figure}[h]
\centering
  \labellist
  	\pinlabel \textcolor{red}{$\alpha_1^a$} at 130 705
	\pinlabel \textcolor{blue}{$\beta_{\sigma(1)}$} at 193 705
	\pinlabel \textcolor{red}{$\alpha_2$} at 222 705
	\pinlabel \textcolor{blue}{$\beta_{\sigma(2)}$} at 279 705
	\pinlabel \textcolor{red}{$\alpha_{3}$} at 318 705
	\pinlabel \textcolor{blue}{$\beta_{\sigma(3)}$} at 373 705
	\pinlabel \textcolor{red}{$\alpha_4$} at 408 705
	\pinlabel \textcolor{blue}{$\beta_{\sigma(4)}$} at 455 705
	\pinlabel \textcolor{red}{$\alpha_1$} at 53 595
	\pinlabel \textcolor{blue}{$\beta_{1}$} at 108 595
	\pinlabel \textcolor{red}{$\alpha_2$} at 130 595
	\pinlabel \textcolor{blue}{$\beta_{\sigma(1)+1}$} at 190 595
	\pinlabel \textcolor{red}{$\alpha_{3}$} at 225 595
	\pinlabel \textcolor{blue}{$\beta_{\sigma(2)+1}$} at 279 595
	\pinlabel \textcolor{red}{$\alpha_4$} at 318 595
	\pinlabel \textcolor{blue}{$\beta_{\sigma(3)+1}$} at 373 595
	\pinlabel \textcolor{red}{$\alpha_{5}$} at 408 595
	\pinlabel \textcolor{blue}{$\beta_{\sigma(4)+1}$} at 455 595
   \endlabellist
 \includegraphics[scale=.7]{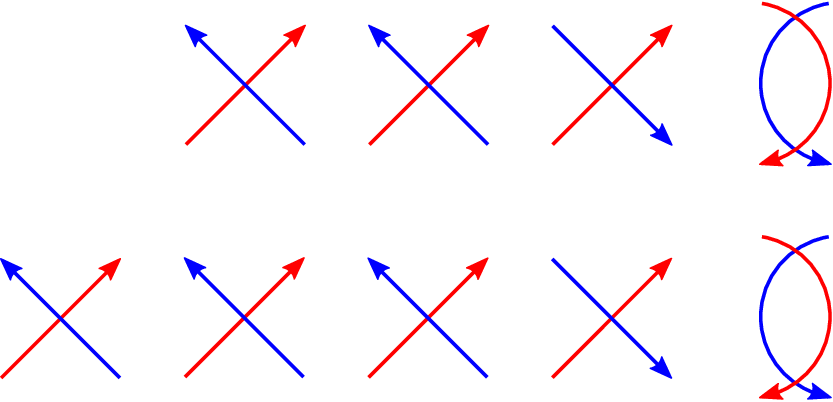} 
      \vskip .2 cm
       \caption{Top: A formal internal flow $\phi\in\FnA^L$. Bottom: The formal flow $f(\phi)\in \mathcal{F}_{n+1}$. (Here $n=4$.)}\label{fig:f_flow_d}
\end{figure}

\begin{defn}\label{def:d-formula}
Let $\STD$ be the map $\FnA^L \to \{\pm 1\}$ defined as follows.
\begin{enumerate}
\item if $\phi: \x \to \y$ is an internal flow, then $\STD(\phi)=S(f(\phi))$, where $S$ is the sign assignment defined in \cite{hfz};
\item if $\psi: \x \to \y$ is a bordered flow, then $\STD(\psi)=\sign(\sigma_{\x})  \Pi_i \e_i(\x)$.
\end{enumerate}
\end{defn}

\begin{thm}
The function $\STD$ is a bordered sign assignment of type $D$.
\end{thm}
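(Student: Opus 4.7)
The plan is to mirror the structure of the proof of Theorem~\ref{thm:sta_exist}, verifying each of the four defining properties \ref{itm:s1d}, \ref{itm:s2d}, \ref{itm:s3d}, \ref{itm:d1} in turn, with the extra $(-1)^{|\rho|}$ factor in \ref{itm:d1} being the main new ingredient to track.

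For \ref{itm:s1d}, \ref{itm:s2d}, and \ref{itm:s3d}, which concern only internal flows, I would argue by pushforward along the injection $f$. If $(\phi_1,\phi_2)$ is an $\alpha$-type (resp.\ $\beta$-type) boundary degeneration of internal flows in $\FnA^L$, then $(f(\phi_1), f(\phi_2))$ is an $\alpha$-type (resp.\ $\beta$-type) boundary degeneration in $\mathcal{F}_{n+1}$, since $f$ simply prepends a universal positive cross labeled $\alpha_1,\beta_1$ at position $1$ and shifts the remaining labels, leaving all moving coordinates and gluing data intact. The identity for $\STD = S \circ f$ then follows from the corresponding property of the closed sign assignment $S$ from \cite{hfz}. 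The same argument handles internal squares of internal flows for \ref{itm:s3d}.

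For \ref{itm:d1}, let $(\phi_1\colon \x \to \x', \phi_2\colon \x' \to \y')$ and $(\phi_3\colon \x \to \y, \phi_4\colon \y \to \y')$ form a bordered square. Without loss of generality, $\phi_1$ and $\phi_4$ are bordered of type $\rho$ while $\phi_2$ and $\phi_3$ are internal. By Definition~\ref{def:d-formula},
\[\STD(\phi_1)\STD(\phi_4) = \sign(\sigma_\x)\sign(\sigma_\y)\prod_i \e_i(\x)\e_i(\y),\]
while $\STD(\phi_2)\STD(\phi_3) = S(f(\phi_2))\,S(f(\phi_3))$ can be computed using the explicit formula from \cite{hfz} as a domain contribution $S_0$ times a product of $\e$'s. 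Since the moving coordinate of a left bordered flow is position $1$, we have $\e_i(\x)=\e_i(\x')$ and $\e_i(\y)=\e_i(\y')$ for $i\geq 2$, so the essential data reduces to $\sign(\sigma_\x)\sign(\sigma_\y)$ together with the four signs $\e_1(\x), \e_1(\x'), \e_1(\y), \e_1(\y')$. I would then run a case analysis analogous to Cases~1 and~2 (including subcases 2.1, 2.2.1, 2.2.2) in the proof of Theorem~\ref{thm:sta_exist}, split on whether $\phi_2,\phi_3$ are bigons or rectangles and on whether position $1$ is among their moving coordinates, to compare $S_0(f(\phi_2))$ and $S_0(f(\phi_3))$ and match the product $\STD(\phi_1)\STD(\phi_2)\STD(\phi_3)\STD(\phi_4)$ with $-(-1)^{|\rho|}$.

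The main obstacle will be accounting correctly for the $(-1)^{|\rho|}$ factor. Using the lemma that identifies $(-1)^{|\rho|}$ with the product of the inherited endpoint signs of $\rho$ on $\ZZ_P$, and the fact that these endpoint signs are precisely the orientations of the two $\alpha^a$-arcs at the moving coordinate of the bordered flow, one sees that $|\rho|$ controls exactly whether the pairs $(\e_1(\x),\e_1(\x'))$ and $(\e_1(\y),\e_1(\y'))$ match or flip relative to the orientations at the bottom and top of the rectangle. Tracking this sign carefully through each subcase of the internal flow analysis will yield the desired identity $(-1)^{|\rho|}\STD(\phi_1)\STD(\phi_2)\STD(\phi_3)\STD(\phi_4)=-1$, completing the proof. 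Unlike the type $A$ setting, where the bordered flow sign absorbs orientation data through its explicit dependence on both $\s$ and a flip of $\e_n$, the type $D$ sign is the uniform expression $\sign(\sigma_\x)\prod_i \e_i(\x)$, so the orientation data enters only implicitly through the behavior of $\e_1$ across $\phi_1$ and $\phi_4$; aligning this with $|\rho|$ is the crucial technical point.
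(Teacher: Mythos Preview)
Your proposal is correct and follows the same overall architecture as the paper: push forward along $f$ for the three internal properties, then handle \ref{itm:d1} separately. However, for \ref{itm:d1} the paper's argument is considerably more streamlined than the full case analysis you outline.

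The key simplification you are missing is that the product $\STD(\phi_1)\STD(\phi_4)$ can be evaluated directly, without any case splitting on the internal flows. You already wrote down
\[
\STD(\phi_1)\STD(\phi_4)=\sign(\sigma_{\x})\sign(\sigma_{\y})\prod_i \e_i(\x)\,\e_i(\y),
\]
but the paper immediately observes that this equals $-1$ in all cases, because any internal flow $\phi_3\colon \x\to\y$ flips the quantity $\sign(\sigma)\prod_i\e_i$: a bigon preserves $\sigma$ and flips exactly one $\e_i$, while a rectangle flips $\sign(\sigma)$ and changes either zero or two of the $\e_i$. This single observation replaces your entire bigon-versus-rectangle case split for the bordered factors.

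For the remaining factor $\STD(\phi_2)\STD(\phi_3)$, the paper also avoids most of the subcase structure. It notes that $\e_1(\x)=\e_1(\x')$ is equivalent to $|\rho|=0$ (which you correctly identify via the endpoint-sign lemma), and then argues in just two cases: if $\e_1(\x)=\e_1(\x')$ then $f(\phi_2)$ and $f(\phi_3)$ are literally the same closed flow (since $f$ erases the $\alpha^a$-label distinction and the sign profiles now coincide), giving product $1=(-1)^{|\rho|}$; if $\e_1(\x)=-\e_1(\x')$ then $f(\phi_2)$ and $f(\phi_3)$ differ by a simple flip at the image of coordinate $1$, and the simple-flip arguments from the type $A$ proof give product $-1=(-1)^{|\rho|}$. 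Combining, $\prod_i\STD(\phi_i)=(-1)^{|\rho|+1}$ as required.

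So your approach would succeed, but the paper's argument shows that the uniform formula $\STD(\psi)=\sign(\sigma_{\x})\prod_i\e_i(\x)$ for bordered flows is precisely what makes the type $D$ verification shorter than the type $A$ one, rather than a parallel case analysis of equal length.
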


\begin{prf}
We first show that $\STD$ satisfies properties \ref{itm:s1d} through \ref{itm:s3d}. Suppose $(\phi_1,\phi_2)$ is an $\alpha$-type boundary degeneration. Then $(f(\phi_1),f(\phi_2))$ is an $\alpha$-degeneration in $\mathcal{F}_{n+1}$, so we have $\STD(\phi_1)\STD(\phi_2)=S(f(\phi_1))S(f(\phi_2))=1$. Thus \ref{itm:s1d} holds. Analogous arguments show that \ref{itm:s2d} and \ref{itm:s3d} hold as well.
  
 It remains to show that $\STD$ satisfies \ref{itm:d1}. Suppose the pairs $(\phi_1:\x \to \x',\phi_2:\x' \to \y' )$ and $(\phi_3:\x \to \y,\phi_4:\y \to \y')$ form a bordered square. Without loss of generality, assume $\phi_1$ and $\phi_4$ are bordered flows of type $\rho$. If $\e_1(\x)=\e_1(\x')$, then $gr(\rho)=0$. In this case $f(\phi_2)$ and $f(\phi_3)$ are identically the same flow, and thus $\STD(\phi_2)\STD(\phi_3)=1$. On the other hand, if $\e_1(\x)=-\e_1(\x')$, then $gr(\rho)=1$. In this scenario, it can be shown via arguments analogous to those made in the proof of Theorem~\ref{thm:sta_exist} that $S(f(\phi_2))S(f(\phi_3))=-1=\STD(\phi_2)\STD(\phi_3)$.  In both cases, we  have that $\STD(\phi_2)\STD(\phi_3)=(-1)^{gr(\rho)}$. Now, the internal flow $\phi_3:\x \to \y$ is either a bigon or a rectangle. If it is a bigon with moving coordinate $i$, then $\epsilon(\x)$ and $\epsilon(\y)$ differ only at the $i^{th}$ place, and $\sigma_{\x}=\sigma_{\y}$. If it is a rectangle with moving coordinate $i$ and $j$, then $\sign(\sigma_{\x})=-\sign(\sigma_{\y})$, and $\epsilon(\x)$ and $\epsilon(\y)$ either differ at both or neither of the moving coordinates. In either case, we have that $\sign(\sigma_{\x})  \Pi_i \e_i(\x)\sign(\sigma_{\y})  \Pi_i \e_i(\y)=-1$. Thus, we have that $\STD(\phi_1)\STD(\phi_4)=-1$, and therefore $\Pi \STD(\phi_i)=(-1)^{gr(\rho)+1}$. Thus, \ref{itm:d1} holds.
\end{prf}

\subsection{Equivalence classes of type $D$ sign assignments}
\label{ssec:d-unique}
Since we do not have a condition for type $D$ sign assignments similar to \ref{itm:a2}, there is no way to relate flows of different $\rho$ type. Furthermore, in a given bordered square each bordered flow will be of the same type and thus said type will always occur exactly two times. As such, for a given type $D$ bordered sign assignment $S^D$ and for each type $\rho_i$ of bordered flows, modifying the sign assignment by a multiplicative factor $c_i \in \{\pm 1\}$ applied to all flows of that type results in a function that still satisfies \ref{itm:s1d} through \ref{itm:d1}. With this in mind, let $c_i(S^D)$ for $i=1,2, 3, 123$ be the sign of the flow of type $\rho_i$ with starting generator $\x=\{\id, \mathbf{1}, \s\}$ (where $\s \in \{1,2\}$ as appropriate). Now we can classify bordered sign assignments of type $D$ according to a choice of internal sign assignment for each idempotent and a tuple of choices $(c_1(S^D),c_2(S^D),c_3(S^D),c_{123}(S^D)) \in \{\pm1\}^4$.  

\begin{prop}\label{prop:fourB}
There are four bordered sign assignments of type $D$ and power $n$ for any given signed matched pointed circle and any choice of $\mathbb{Z}/2$ grading, up to gauge equivalence.
\end{prop}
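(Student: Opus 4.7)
The plan is to mirror the three-stage structure of the proof of Proposition~\ref{prop:fourA}, adapted to the type $D$ setting. In the first stage, I invoke the uniqueness result from \cite{hfz} to conclude that any two internal sign assignments of power $n$ are gauge equivalent, so that after an initial gauge correction I may fix a single consistent internal sign assignment. A type $D$ bordered sign assignment is then determined, relative to this fixed internal data, by the tuple $(c_1, c_2, c_3, c_{123}) \in \{\pm 1\}^4$ introduced before the proposition statement, giving at most sixteen candidate assignments to classify.

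In the second stage, I exhibit the gauge equivalences that collapse these candidates down to four. The main building block is the map $u\colon \GnA^L \to \{\pm 1\}$ defined by $u(\x) = 1$ when $\s(\x) = 1$ and $u(\x) = -1$ when $\s(\x) = 2$. Since every internal flow preserves the idempotent, this $u$ fixes the internal sign assignment; since each of the four bordered types $\rho_1, \rho_2, \rho_3, \rho_{123}$ moves between distinct idempotents, $u$ simultaneously negates all four of the $c_i$'s, giving the identification $(c_1, c_2, c_3, c_{123}) \sim (-c_1, -c_2, -c_3, -c_{123})$. I then exhibit a second independent identification among tuples, either via an additional gauge depending on finer data than the idempotent or via an application of \ref{itm:d1} and the grading factor $(-1)^{|\rho|}$ that couples $c_{123}$ to the other $c_i$'s. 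The resulting four representatives may then be labeled by a pair of sign invariants, in the spirit of the type $A$ indexing $(c_1c_2, c_2c_3)$.

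In the third stage, pairwise inequivalence of these four classes is established by the same contradiction argument used at the end of the proof of Proposition~\ref{prop:fourA}. Taking representatives $S$ and $S'$ in distinct classes with identical internal parts, any putative gauge $u$ must satisfy $u(\x)u(\y) = +1$ for every internal flow from $\x$ to $\y$ (by agreement of $S$ and $S'$ on internal flows) and $u(\x)u(\y) = -1$ for a bordered flow from $\x$ to $\y$ of the distinguishing $\rho$-type (by disagreement on that flow). Producing a single pair $(\x, \y)$ connected both by a bordered flow of the relevant type and by an associated internal bigon or rectangle (obtained, as in the type $A$ proof, from a bordered square together with a simple flip) then yields the desired contradiction.

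The main obstacle is the second stage. Unlike type $A$, where axiom \ref{itm:a2} collapses the three would-be parameters $c_{12}, c_{23}, c_{123}$ into functions of $c_1, c_2, c_3$, the type $D$ axioms provide no direct relation between different $\rho$-types, so that $c_{123}$ is \emph{a priori} independent of $c_1, c_2, c_3$ under the square axiom \ref{itm:d1} alone. Identifying the additional equivalence that collapses the eight orbits produced by the idempotent-swap gauge down to the stated four classes is where the real work lies, and will rely either on a subtler gauge whose compatibility with all internal bigons and rectangles must be checked by hand, or on exploiting the $\mathbb{Z}/2$ grading through the factor $(-1)^{|\rho|}$ in \ref{itm:d1}.
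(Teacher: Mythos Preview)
Your proposal has a genuine gap, and it lies precisely where you flagged the ``main obstacle'' --- but the problem is more fundamental than you anticipated, and it spills over into stage 3 as well.

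\textbf{Stage 3 fails outright.} The type $A$ contradiction argument you are trying to port over works because the distinguishing bordered flows there are of type $\rho_{12}$ and $\rho_{23}$, which \emph{preserve} the idempotent. That is what allows a bordered flow and an internal bigon to share the same pair of generators $(\x,\y)$ and produce the contradiction $u(\x)u(\y)=+1=-1$. In the type $D$ setting you have discarded $\rho_{12}$ and $\rho_{23}$ entirely, and every remaining bordered type $\rho_1,\rho_2,\rho_3,\rho_{123}$ \emph{changes} the idempotent, while internal flows preserve it. So no bordered flow can ever share its starting and ending generators with an internal flow, and your proposed contradiction cannot be manufactured.

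\textbf{Stage 2 is also incomplete.} Your idempotent gauge correctly collapses $16$ to $8$, but the second identification is never actually produced. The suggestion that \ref{itm:d1} ``couples $c_{123}$ to the other $c_i$'s'' is wrong: a bordered square involves two bordered flows of the \emph{same} type $\rho$, so \ref{itm:d1} relates values of $S$ on flows of a single type and can never constrain a product like $c_1c_2c_3$ against $c_{123}$.

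The paper's approach avoids both problems with one observation you are missing: bordered flows of \emph{different} types can share the same pair of generators. Specifically, for any flow $\phi_1\colon\x\to\y$ of type $\rho_1$ there is a flow $\phi_2\colon\x\to\y$ of type $\rho_3$ with the very same $\x$ and $\y$ (both types move between the same idempotents and flip $\epsilon_1$ in the same way), and likewise for the pair $\rho_2,\rho_{123}$. Since any gauge multiplies $S(\phi_1)$ and $S(\phi_2)$ by the same factor $u(\x)u(\y)$, the products $c_1c_3$ and $c_2c_{123}$ are gauge invariants. This immediately gives the four classes indexed by $(c_1c_3,c_2c_{123})\in\{\pm1\}^2$ and establishes their pairwise inequivalence without ever needing to compare a bordered flow with an internal one.
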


\begin{proof}
Since we can always fix a consistent choice of internal sign assignments as in the type $A$ case, we are left with sixteen possible classes of sign assignments according to our four choices $(c_1,c_2,c_3,c_{123})$. However, for a given flow $\phi_1: \x \to \y$ of type $\rho_1$, there is another flow $\phi_2: \x \to \y$ of type $\rho_3$. Because these two flows have the same starting and ending generators, any gauge equivalence must either change the sign of both or neither. Thus, the product $c_1c_3$ is invariant under gauge equivalence. We can build a similar pair of flows of type $\rho_2$ and $\rho_{123}$ to show that the product $c_2c_{123}$ is also invariant. The reason we can build such flows for these pairs and no other pair (such as $\rho_1$ and $\rho_2$) is because each pair agrees in terms of the relative parity of the sign profile of their starting and ending generators at the first coordinate, i.e. if $\epsilon_1(\x)\neq\epsilon_1(\y)$ differ for some flow $\phi:\x \to \y$ of type $\rho_1$, then $\epsilon_1(\x')\neq\epsilon_1(\y')$ for any other flow $\phi':\x' \to \y'$ of type $\rho_1$ or $\rho_3$. This is what allows us to build the pair of flows that share starting and ending generators. Then similar to type $A$ structures, we have exactly four equivalence classes that are classified by the pair $(c_1c_3, c_2c_{123}) \in \{\pm 1\}^2$. 
\end{proof}

We omit the sign assignment from the notation when it is clear from context or unnecessary, and will write either $c_i$ or $c_i^D$ depending on whether or not it is important to note that the sign assignment was of type $D$.

\subsection{The type $D$ structure $\cfdhat$.}
\label{ssec:d-def}
 \begin{defn}
 Let $\HD = (\Sigma, \alpha,\beta, z)$ be a nice Heegaard diagram for a bordered 3-manifold $Y$ with torus boundary. Fix an ordering and orientation on  the $\alpha$-circles, $\beta$-circles, and $\alpha$-arcs. Let $P$ be the sign sequence for which $\bdy \HD$ can be identified  with $\ZZ_P$ (so that each $\alpha$-arc is oriented from a $+$ to a $-$). The generators of the diagram $\HD$ naturally define formal left bordered generators of power $|\beta|$, and the empty bigons and rectangles on $\HD$ (counted by the structure map $\delta^1$ for $\cfdhat(\HD)$) specify formal left flows of power $|\beta|$. We further fix a choice $C \in \{+,-\}^2$, and let $\STD$ be a representative of the class $S_C$.

We define a left type $D$ structure $\cfdhat_{C,P}(\HD;\Z)$ over $\alg_P$ as follows. Let $\cfdhat_{C,P}(\HD;\Z)$ be the $\Z$-module freely generated by the generators $\mathcal G(\HD)$ of the diagram. We give $\cfdhat_{C,P}(\HD;\Z)$ the structure of a left $\mathcal I_P$-module by setting

\[\iota_t \cdot \x= \begin{cases}
\x \quad \textrm{ if } t = \overline{o(\x)} \\
0 \quad \textrm{otherwise.}
\end{cases}
\]

 We define a map 
 \[\delta^1\colon \cfdhat_{C,P}(\HD;\Z)\to \alg_P\otimes_{\mathcal I_P} \cfdhat_{C,P}(\HD;\Z)\]
  by counting the same regions on $\HD$ as for  $\cfdhat(\HD)$ (i.e.\ empty bigons and rectangles), but this time with sign. Namely, define
\begin{align*} 
\delta^1(\x)&= \sum_{ \y \in \mathbb{T}_\alpha \cap \mathbb{T}_\beta} \ \sum_{\phi \in \mathrm{Flows}_{\mathrm{int}}(\x,\y)} \ \STD(F(\phi)) \cdot \iota_\x \otimes \y \\
		&+ \sum_{ \y \in \mathbb{T}_\alpha \cap \mathbb{T}_\beta} \ \sum_{a \in \alg_P} \ \sum_{\phi \in \mathrm{Flows}_{a}(\x,\y)} \ \STD(F(\phi)) \cdot a \otimes \y 
  \end{align*}
 where $\mathrm{Flows}_{\mathrm{int}}(\x,\y)$ and $\mathrm{Flows}_{a}(\x,\y)$ are the sets of internal and bordered flows of type $a$, respectively,  between $\x$ and $\y$ on $\HD$, $F(\phi)$ is the formal flow in $\FnA^R$ that corresponds to $\phi$, and $\iota_\x$ is the idempotent occupied by $\x$. \end{defn}
 
 \begin{thm}
 The  map $\delta^1$ satisfies the type $D$ relation 
\[0=(\mu_2\otimes \id_{\cfdhat_{C,P}(\HD;\Z)}) \circ (\id_{\alg_P} \otimes \delta^1) \circ \delta^1 + (\mu_1 \otimes \id_{\cfdhat_{C,P}(\HD;\Z)}) \circ \delta^1. \]
 \end{thm}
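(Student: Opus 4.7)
The plan is to verify the defining relation for the type $D$ structure via a case analysis paralleling the closed-diagram proof of $\partial^2=0$, upgraded to track the extra algebra-output layer. Since $\mu_1=0$ on the torus algebra $\alg_P$, the relation reduces to
\[(\mu_2 \otimes \id) \circ (\id \otimes \delta^1) \circ \delta^1 = 0.\]
First I would apply the left-hand side to a generator $\x$ and expand it as a sum indexed by composable flow pairs $(\phi_1 \colon \x \to \y,\, \phi_2 \colon \y \to \z)$ on $\HD$, each contributing $\pm \STD(F(\phi_1))\STD(F(\phi_2)) \cdot \mu_2(a(\phi_1), a(\phi_2)) \otimes \z$, with the leading sign dictated by the Koszul rule for $\id \otimes \delta^1$. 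Then I would group these contributions by the combined domain $D(\phi_1)+D(\phi_2)$ and the output algebra element, and argue cancellation group by group.

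For \emph{internal--internal} pairs, the output is $\iota \otimes \z$; the combined domain is either an $\alpha$- or $\beta$-boundary degeneration (when $\x=\z$) or an internal square (when $\x\neq\z$), and cancellation follows from \ref{itm:s1d}, \ref{itm:s2d}, \ref{itm:s3d}, exactly as in the closed-diagram proof of $\partial^2=0$ in \cite{hfz}, transported through the embedding $f$ of Section~\ref{ssec:d-exist}. For \emph{mixed} pairs (one internal and one bordered of type $\rho \in \{\rho_1,\rho_2,\rho_3,\rho_{123}\}$), the output is $\rho \otimes \z$ and the two composable pairs sharing a given combined domain are opposite edges of a bordered square. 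Property \ref{itm:d1} gives $\STD(\phi_1)\STD(\phi_2)\STD(\phi_3)\STD(\phi_4) = (-1)^{|\rho|+1}$, which, once combined with the Koszul sign $(-1)^{|\rho|}$ introduced when the odd-degree map $\delta^1$ is pulled past an algebra element of grading $|\rho|$, yields the required cancellation.

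The remaining case is \emph{bordered--bordered} pairs, whose outputs are $\rho_{12}\otimes\z$ (from $(\rho_1,\rho_2)$-compositions) and $\rho_{23}\otimes\z$ (from $(\rho_2,\rho_3)$-compositions); no $\rho_{123}\otimes\z$ arises, since no product of two allowed $\delta^1$-types lands in $\rho_{123}$. Here the combined domain on $\HD$ is a hexagonal region whose $\partial\HD$-boundary traverses two consecutive segments. On a nice diagram I expect such a hexagon to admit exactly one alternative decomposition into a composable pair of bordered rectangles through a distinct intermediate generator $\y'$, obtained from $\y$ by swapping which boundary arc is occupied and flipping one coordinate of the sign profile. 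The formula $\STD(\psi) = \sign(\sigma_\x) \prod_i \e_i(\x)$ from Definition~\ref{def:d-formula} changes sign under each of these modifications, so the two contributions combine to zero.

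The main obstacle is this last case: unlike type $A$, no sign-assignment axiom analogous to \ref{itm:a2} directly governs bordered--bordered compositions for type $D$, so the cancellation must be extracted from the explicit bordered-flow formula. I expect the technical heart of the argument to be enumerating the geometric configurations of the hexagonal combined domains, checking that the alternative-decomposition map is a fixed-point-free involution on composable $(\rho_1,\rho_2)$- and $(\rho_2,\rho_3)$-pairs, and verifying the sign-profile and permutation bookkeeping in each case, across all four signed pointed matched circles $\ZZ_P$.
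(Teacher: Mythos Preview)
Your decomposition into three cases (internal--internal, mixed, bordered--bordered) is more careful than the paper's own proof, which is extremely terse: it asserts that applying $\delta^1$ twice ``will result in a collection of internal and bordered squares'' and then invokes \ref{itm:s3d} and \ref{itm:d1}. For the first two cases your argument matches the paper's; in particular your Koszul-sign computation showing that \ref{itm:d1} is exactly what is needed for the mixed case is correct and makes explicit something the paper leaves implicit.

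The substantive divergence is your third case. Two points:

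\begin{itemize}
\item Your description of the alternative decomposition is wrong. You say $\y'$ is obtained from $\y$ by ``swapping which boundary arc is occupied,'' but the idempotent constraints forbid this: for a $(\rho_1,\rho_2)$ composition both the end of the $\rho_1$ flow and the start of the $\rho_2$ flow must lie on $\alpha_1^a$, so any alternative intermediate generator $\y'$ must also occupy $\alpha_1^a$. The candidate $\y'$ would have to be a \emph{different} point of $\alpha_1^a\cap\beta_{\sigma(1)}$, not a point on the other arc. Consequently the sign argument you sketch (permutation unchanged, one sign flipped from the arc swap) does not apply.
\item More fundamentally, it is worth checking whether this case arises at all on a nice diagram. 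For type $D$ the Reeb chord of a half-strip runs from a $\bdy\HD$-endpoint of the \emph{ending} generator's arc to one of the \emph{starting} generator's arc. For a composable $(\rho_1,\rho_2)$ pair this forces the two half-strips to use the two \emph{different} endpoints of $\alpha_1^a$ on the $\y$-side, so the combined domain has all of $\alpha_1^a$ in its $\alpha$-boundary --- a very restrictive configuration quite unlike the type~$A$ bordered triangle (where the two half-strips meet at the \emph{same} $\bdy\HD$-corner and the combined domain is again a single half-strip). The paper's proof implicitly treats this case as vacuous; whether that is justified, or whether such compositions genuinely occur and require a separate sign argument, deserves a careful geometric analysis rather than the hexagon heuristic you propose.
\end{itemize}

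In short: your first two cases are correct and essentially what the paper does; your third case identifies a real point the paper glosses over, but your proposed mechanism for handling it is incorrect as stated.
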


 \begin{proof}
In the torus algebra, the differential operator $\mu_1$ is trivial on all elements, so the relation simplifies to $0=(\mu_2 \otimes \id_{\cfdhat_{C,P}(\HD;\Z)}) \circ (\id_{\alg_P} \otimes \delta^1) \circ \delta^1$. Now applying $\delta^1$ twice to the same generator $\x$ and tracking the outputs will result in a collection of internal and bordered squares. Then property \ref{itm:s3d} ensures that each internal square results in zero contribution, and property \ref{itm:d1} does the same for bordered squares. Since we explicitly constructed our sign assignment $\STD$ to satisfiy these properties, we are done.
\end{proof}

\begin{thm}\label{thm:indD}
Let $\HD$ be a bordered diagram for manifold with torus boundary. Fix a choice $C \in \{+,-\}^2$ and a sign sequence $P$. Then the structure $\cfdhat_{C,P}(\HD;\Z)$ does not depend on the ordering and orientation of the curves, nor on the choice of representative $\STD$ of the class $S_C$.
\end{thm}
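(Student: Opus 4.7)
The plan is to mirror the proof of Theorem~\ref{thm:indA} essentially line for line, handling the three independence claims in the same order: choice of representative of $S_C$, choice of orientations, and choice of ordering. All three reductions are formal once one has Proposition~\ref{prop:fourB} and the analog of Lemma~\ref{lem:existA} for type $D$ on hand.

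First I would dispense with the choice of representative. Fix an ordering and orientation of the curves on $\HD$, and let $S, S'$ be two representatives of $S_C$. By Proposition~\ref{prop:fourB} there exists a gauge equivalence $u\colon \GnA^L \to \{\pm 1\}$ with $S(\phi) = u(\x)\, S'(\phi)\, u(\y)$ for every formal flow $\phi\colon \x \to \y$. Define a $\Z$-linear map $H$ on the underlying module by $H(\x) = u(F(\x))\,\x$, where $F(\x)$ is the formal generator associated to the Heegaard generator $\x$. Since $u$ takes values in $\{\pm 1\}$, $H$ is invertible. To verify $H$ intertwines $\delta^{1,S}$ and $\delta^{1,S'}$, I would expand $\delta^{1,S}(\x)$ as a sum of terms $S(\phi)\cdot a_\phi \otimes \y$, substitute $S(\phi) = u(F(\x))\, S'(\phi)\, u(F(\y))$, and use $u(F(\y))^2 = 1$ to move $u(F(\y))$ onto the right tensor factor; the resulting sum equals $u(F(\x))\,(\id_{\alg_P}\otimes H)\bigl(\delta^{1,S'}(\x)\bigr)$, which is exactly $(\id_{\alg_P}\otimes H)\circ \delta^{1,S'}\circ H^{-1}$ applied to $H(\x)$. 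Hence $H$ is an isomorphism of type $D$ structures.

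Next I would handle independence of orientation. With $S$ and the ordering fixed, suppose two orientations $o, o'$ agree except on a single curve; since $P$ pins down the orientations of the $\alpha$-arcs, the differing curve is an $\alpha$-circle or a $\beta$-circle. Define $h\colon \FnA^L \to \FnA^L$ by flipping the orientation of that curve in every formal flow. As in the proof of Lemma~\ref{lem:existA}, $h$ takes $\alpha$-boundary degenerations to $\alpha$-boundary degenerations, and likewise for $\beta$-degenerations, internal squares, and bordered squares, so $S \circ h$ again satisfies \ref{itm:s1d}--\ref{itm:d1}; direct inspection on the four generating flows of type $\rho_1,\rho_2,\rho_3,\rho_{123}$ used in Proposition~\ref{prop:fourB} shows that $S \circ h$ still lies in $S_C$. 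By construction the structure maps defined from $(S,o)$ and from $(S \circ h, o')$ agree on the nose, and the first step then identifies $(S \circ h, o')$ with $(S, o')$ up to isomorphism. Independence of ordering follows by the same device, replacing $h$ with the relabeling bijection $g\colon \FnA^L \to \FnA^L$ induced by any admissible permutation of the curves (the labels of the $\alpha$-arcs are pinned by $P$ and the basepoint).

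The only step requiring a little care is the first one: one must check that $H$ intertwines $\delta^1$ on both internal and bordered contributions, since internal-flow terms carry the idempotent $\iota_\x$ in the algebra factor rather than a $\rho_I$. This is a matter of bookkeeping, and is where the $\Z/2$-grading $|\rho|$ could in principle intrude; however, no grading-dependent sign appears in the definition of $\delta^1$ beyond the $(-1)^{|\rho|}$ already absorbed into $\STD$ via \ref{itm:d1}, so the gauge equivalence acts uniformly on all flow types and the verification goes through. The checks that $S \circ h$ and $S \circ g$ remain valid sign assignments in $S_C$ are entirely parallel to the type $A$ case and present no new difficulty.
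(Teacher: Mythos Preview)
Your proposal is correct and follows essentially the same approach as the paper's own proof, which simply states that each of the three pieces can be proven by an identical argument to the corresponding piece of Theorem~\ref{thm:indA}. You have carried out those arguments in more detail than the paper does, and your additional bookkeeping remarks (about the idempotent versus $\rho_I$ algebra factors in $\delta^1$, and about why $S\circ h$ and $S\circ g$ remain in the class $S_C$) are valid refinements rather than departures.
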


\begin{proof}
The proof of this theorem can be done three pieces; fixed order and orientation with differing representative, fixed representative and ordering with differing orientation, and fixed representative and orientation with differing orders. Each piece can be proven using an identical argument from the corresponding section of the proof of Theorem~\ref{thm:indA}. Thus we are done.
\end{proof}

We are now ready to prove Theorem~\ref{thm:ind}.

\begin{proof}
Theorems~\ref{thm:indA} and \ref{thm:indD} separately prove the independence of type $A$ structures and type $D$ structures from their relevant choices. Thus we are done.
\end{proof}


\section{Pairing}
\label{sec:pairing}

In order to discuss pairing bordered sign assignments and the relation to closed ones, we find it convenient to introduce an additional set of objects to be thought of as formal flows. 
\begin{defn}\label{def:id-flow}
Given a bordered generator $\x$,  the \emph{formal identity flow $\phi_{\x}$} is defined graphically in the same way as $\x$.  
\end{defn}
We think of $\phi_{\x}$ as a flow from $\x$ to $\x$ which has no moving coordinates, and whose domain is $\x$. 
We extend the sets of flows between bordered generators by defining
\[\overline{\FnA^{\bullet}} \coloneqq \FnA^{\bullet}\cup \{\phi_{\x} \mid \x\in \GnA^{\bullet}\} \qquad \textrm{ for } \bullet\in \{L,R\}.\]
We also extend bordered sign assignments to sign functions on these larger sets of flows. Specifically, if $S: \FnA^{\bullet}\to \{\pm 1\}$ is a bordered sign assignment, we define  a new sign function 
\[\overline{S}: \overline{\FnA^{\bullet}}\to \{\pm 1\}\]
that maps all identity flows to $1$ and agrees with $S$ on all other flows. 

We also  introduce a couple more notions of compatibility that allow us to build the ``formal" analogue to gluing Heegaard diagrams. For generators, compatibility is defined as follows.

\begin{defn}\label{def:union-gen}
Let $P, P'\in \{-,+\}^{2}$. We say that a  pair of formal generators  $(\x, \y)\in \GmA^R \times  \GnAp^L$ is \emph{compatible} if $P=P'$ and $\s(\x) \neq \s(\y)$. 

Given compatible formal generators $\x_1\in \GmA^R$ and $\x_2\in \GnA^L$,  their \emph{union}   $\x_1 \cup \x_2 \in \mathcal{G}_{m+n}$ is the formal generator obtained diagrammatically as follows. Take the diagrammatic representations for $\x_1$ and $\x_2$. Replace each label $\alpha_i$ with $\alpha_{n+1+i}$ and each label $\beta_i$ with $\beta_{n+i}$ in the diagrammatic representation of $\x_2$, and replace each label $\alpha_i^a$ with $\alpha_{n+i-1}$ for $i=1,2$. Finally, take the union of the two resulting diagrams. 

We denote the set of pairs of compatible generators in $\GmA^R \times  \GnA^L$ by $\gmnpp$, or simply by $\gmn$ when the choice $P$ is clear from the context or unimportant. 
\end{defn}

For flows, compatibility is defined similarly.

\begin{defn}\label{def:union-flow}
Let $P, P'\in \{-,+\}^{2}$. 
We say that a pair of flows  $(\phi_1, \phi_2) \in \overline{\FmA^R} \times \overline{\FnAp^L}$ is \emph{compatible} if $P=P'$ and either
\begin{itemize}
\item[-] one flow is internal, and the other is an identity flow, or
\item[-] both flows are bordered and of the same type.
\end{itemize}

Given compatible formal flows $\phi_1\in \overline{\FmA^R}$ and $\phi_2\in \overline{\FnA^L}$,  their \emph{union} $\phi_1\cup \phi_2 \in \mathcal{F}_{m+n}$ is the formal flow obtained diagrammatically as follows. 
Take the diagrammatic representations for $\phi_1$ and $\phi_2$. Replace labels as for generators (see Definition~\ref{def:union-gen}). Take the union of the resulting diagrams, glued along the pair of unlabeled edges if there are such. See Figure~\ref{fig:paired_flow} for an example. 

\begin{figure}[h]
\centering
\labellist
	\pinlabel \textcolor{red}{$\alpha_1$} at 32 587
	\pinlabel \textcolor{red}{$\alpha_2$} at 92 587
	\pinlabel \textcolor{red}{$\alpha_3$} at 152 587
	\pinlabel \textcolor{red}{$\alpha_1^a$} at 235 587
	\pinlabel \textcolor{red}{$\alpha_2^a$} at 235 645
	\pinlabel \textcolor{red}{$\alpha_1^a$} at 333 587
	\pinlabel \textcolor{red}{$\alpha_2^a$} at 333 645
	\pinlabel \textcolor{red}{$\alpha_2$} at 398 587
	\pinlabel \textcolor{red}{$\alpha_3$} at 458 587
	\pinlabel \textcolor{red}{$\alpha_1$} at 60 510
	\pinlabel \textcolor{red}{$\alpha_2$} at 120 510
	\pinlabel \textcolor{red}{$\alpha_3$} at 180 510
	\pinlabel \textcolor{red}{$\alpha_4$} at 272 510
	\pinlabel \textcolor{red}{$\alpha_5$} at 272 566
	\pinlabel \textcolor{red}{$\alpha_6$} at 380 510
	\pinlabel \textcolor{red}{$\alpha_7$} at 440 510
	\pinlabel \textcolor{blue}{$\beta_3$} at 67 587
	\pinlabel \textcolor{blue}{$\beta_2$} at 127 587
	\pinlabel \textcolor{blue}{$\beta_4$} at 185 587
	\pinlabel \textcolor{blue}{$\beta_1$} at 212 615
	\pinlabel \textcolor{blue}{$\beta_1$} at 374 615
	\pinlabel \textcolor{blue}{$\beta_3$} at 432 587
	\pinlabel \textcolor{blue}{$\beta_2$} at 492 587
	\pinlabel \textcolor{blue}{$\beta_3$} at 95 510
	\pinlabel \textcolor{blue}{$\beta_2$} at 155 510
	\pinlabel \textcolor{blue}{$\beta_4$} at 212 510
	\pinlabel \textcolor{blue}{$\beta_1$} at 239 541
	\pinlabel \textcolor{blue}{$\beta_5$} at 345 541
	\pinlabel \textcolor{blue}{$\beta_7$} at 412 510
	\pinlabel \textcolor{blue}{$\beta_6$} at 473 510
	\pinlabel $\rho_3$ at 274 615
	\pinlabel $\rho_3$ at 311 615
\endlabellist
 \vspace{.2 cm}
 \includegraphics[scale=.9]{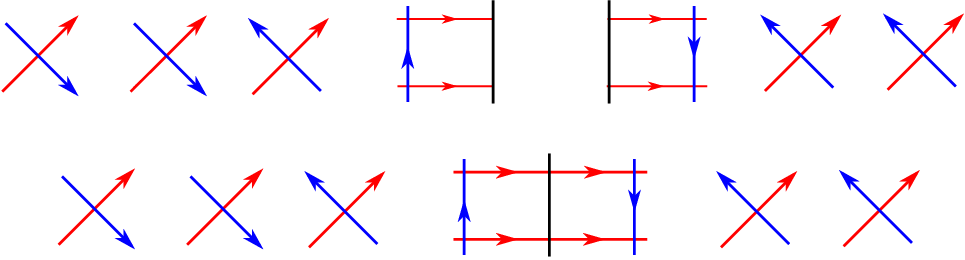}
 \vspace{.3 cm}
 	\caption{Top: A formal right bordered flow in $\overline{\FmA^R}$ and  a formal left bordered flow in $\overline{\FnA^L}$ which are compatible. Bottom: The union of the two bordered flows. Here $m=4$, $n=3$, and $P=(++)$.}\label{fig:paired_flow}
\end{figure}

We denote the set of pairs of compatible flows in $\overline{\FmA^R} \times \overline{\FnA^L}$ by $\fmnpp$, or simply by $\fmn$ when the choice $P$ is clear from the context or unimportant. 
\end{defn}

Note that if $\phi_i$ is a flow from $\x_i$ to $\y_i$ for $i=1,2$, and $\phi_1$ and $\phi_2$ are compatible, then the starting generators are compatible too, and so are the ending generators. In that case,  $\phi_1\cup \phi_2$ is a flow from $\x_1 \cup \x_2$ to $\y_1\cup \y_2$.

Also note that the map $\gmnpp\to \mathcal{G}_{m+n}$ given by $(\x_1,\x_2)\mapsto \x_1\cup \x_2$ is an injection, so we'll sometimes view $\gmnpp$ as a subset of $\mathcal{G}_{m+n}$ and use the notation  $(\x_1,\x_2)$ and $\x_1\cup \x_2$ interchangeably. Similarly, we can view $\fmnpp$ as a subset of $\mathcal{F}_{m+n}$ and use the notation  $(\phi_1,\phi_2)$ and $\phi_1\cup \phi_2$ interchangeably.

\begin{defn}\label{def:compat}
Let $P, P'\in \{-,+\}^{2}$. 
A right type $A$ bordered sign assignment $\STA$ of power $m$ and a left type $D$ bordered sign assignment $\STDp$ of power $n$ are \emph{compatible} if $P=P'$ and the map 
\begin{align*}
\overline{\STA} \otimes \overline{\STD}  \colon  \fmnpp & \to \{\pm 1\}\\
(\phi_1, \phi_2)& \mapsto \overline{\STA}(\phi_1)  \overline{\STD}(\phi_2) \cdot (-1)^{|\x||\phi_2|}
\end{align*}
satisfies \ref{itm:s1}, \ref{itm:s2}, and \ref{itm:s3} on its domain. \footnote{The choice of $\otimes$ in the notation  is inspired by the fact that this sign function is exactly what one sees when tensoring a type $A$ and a type $D$ structure whose structure maps are defined using the two bordered sign assignments; see the proof of Theorem~\ref{thm:pair}.} Note that $|\phi_2|=0$ if $\phi_2$ is an identity flow, and  $|\phi_2|=1$ otherwise.
\end{defn}

We next show that given any  type $A$ sign assignment $\STA$, there exists a compatible type $D$ sign assignment $\STD$. In our discussion, we will sometimes use the following definition.

\begin{defn}
Let $(\phi_1,\phi_2)$ and $(\phi_3,\phi_4)$ be two pairs of right (resp.\ left) formal flows that form a bordered square such that $\phi_1$ and $\phi_4$ are the bordered flows. Let $\psi_1$ be a left (resp.\ right) bordered flow that is compatible with $\phi_1$ and $\phi_4$. Then the collection of closed flows obtained by taking the unions of $\phi_1$ and $\phi_4$ with $\psi_1$, and the unions of $\phi_2$ and $\phi_3$ with the appropriate identity flows forms a closed square. We refer to the set $\{(\phi_1,\phi_2),(\phi_3,\phi_4), \psi_1\}$ as a \emph{paired square} of type $A$ (resp.\ type $D$). 
\end{defn}

\begin{figure}[h]
\centering
\labellist
	\pinlabel $\x$ at 232 536
	\pinlabel $\y$ at 232 582
	\pinlabel $\gs$ at 180 536
	\pinlabel $\gt$ at 180 582
	\pinlabel $\x\y$ at 184 502
	\pinlabel $\gs\gt$ at 233 505
	\pinlabel $\ga$ at 322 582
	\pinlabel $\gb$ at 322 536
	\pinlabel $\x$ at 452 542
	\pinlabel $\y$ at 452 502
	\pinlabel $\gs$ at 514 542
	\pinlabel $\gt$ at 514 505
	\pinlabel $\x\y$ at 519 582
	\pinlabel $\gs\gt$ at 452 585
	\pinlabel $\ga$ at 377 505
	\pinlabel $\gb$ at 377 542
	\endlabellist
 \includegraphics[scale=.9]{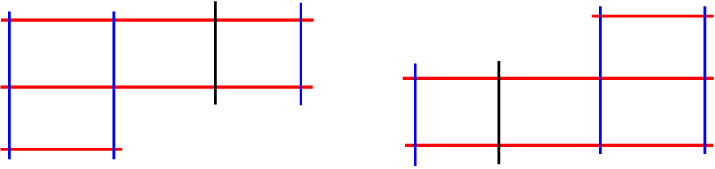}
 	\caption{Left: An example of a paired square of type $A$. Right: An example of a paired square of type $D$.}\label{fig:paired_sq}
\end{figure}

\begin{thm}\label{thm:compatexist}
Let $\STA$ be a given type $A$ bordered sign assignment of power $m$. Then for any $m>0$, there exists a type $D$ bordered sign assignment $\STD$ of power $n$ that is compatible with $\STA$.
\end{thm}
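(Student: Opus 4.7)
The plan is to construct an explicit type $D$ bordered sign assignment $\STD$ compatible with $\STA$ and verify the three conditions \ref{itm:s1}--\ref{itm:s3} of Definition~\ref{def:compat} on the tensor-product sign function on $\fmnpp$. I would first define the internal part of $\STD$ using the same underlying closed sign assignment $S$ of \cite{hfz} that appears in $\STA$, composed with the embedding $f\colon \FnA^L\to\mathcal{F}_{n+1}$ of Section~\ref{ssec:d-exist}. With this choice, for any compatible pair $(\phi_1,\phi_2)\in\fmnpp$ in which one component is internal and the other is an identity flow, the tensor sign $\overline{\STA}(\phi_1)\overline{\STD}(\phi_2)(-1)^{|\x||\phi_2|}$ agrees with the value of an induced closed sign assignment on $\phi_1\cup\phi_2$, so \ref{itm:s1}--\ref{itm:s3} for such closed configurations reduce directly to \ref{itm:s1a}--\ref{itm:s3a} or \ref{itm:s1d}--\ref{itm:s3d} on the nontrivial side.

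Next I would verify \ref{itm:s3} on the closed squares coming from the two types of paired squares defined just before the theorem. For a paired type $A$ square, the product of the four tensor-product signs simplifies via Definition~\ref{def:compat} to $\STA(\phi_1)\STA(\phi_2)\STA(\phi_3)\STA(\phi_4)(-1)^{|\x|+|\y|}$; by \ref{itm:a1} the four $\STA$-values multiply to $1$, and $|\x|+|\y|=1$ because $\phi_3\colon\x\to\y$ is an internal flow, so the product is $-1$. The analogous calculation for a paired type $D$ square yields $\STD(\psi_1)\STD(\psi_2)\STD(\psi_3)\STD(\psi_4)(-1)^{|\rho|}$, where $|\x|+|\x'|=|\rho|$ for the bordered A-side flow $\phi\colon\x\to\x'$; this equals $-1$ by \ref{itm:d1}. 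Hence paired squares impose no constraint on the choice of bordered constants in $\STD$.

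The remaining closed configurations pair bordered A- with bordered D-flows on both sides without a free internal partner: annular $\alpha$- and $\beta$-boundary degenerations obtained by gluing two pairs of bordered flows, and closed squares encoding the algebra relation $m_2(m_2(x,\rho_i),\rho_j)=m_2(x,\rho_i\rho_j)$ via an A-side bordered triangle paired with D-side data. Using \ref{itm:a2} on the A side and the constants governing $\STD$ on bordered flows, these conditions translate into a finite system of sign equations in $(c_1^D,c_2^D,c_3^D,c_{123}^D)$ in terms of $(c_1^A,c_2^A,c_3^A)$ and the grading sequence $P$. By the four-class structure of Propositions~\ref{prop:fourA} and \ref{prop:fourB}, the system is solvable: for each equivalence class of $\STA$ one can choose the $c_i^D$'s to produce a representative $\STD$ that lies in a compatible class.

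The main obstacle is the sign bookkeeping in this bordered-bordered case: the Koszul factor $(-1)^{|\x||\phi_2|}$ in Definition~\ref{def:compat} interacts nontrivially with the grading-dependent sign $(-1)^{|\rho|}$ from \ref{itm:d1} and with the $P$-dependent $\Z/2$-grading on $\alg_P$, so one must enumerate cases across $\rho\in\{\rho_1,\rho_2,\rho_3,\rho_{123}\}$ and the four sign sequences $P\in\{+,-\}^2$. The coarse four-class classification reduces this to a finite combinatorial check, after which the existence of a compatible $\STD$ follows.
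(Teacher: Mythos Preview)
Your overall strategy matches the paper's: verify \ref{itm:s1}--\ref{itm:s3} on $\fmnpp$ by cases on how the closed configuration decomposes into left/right pieces, and isolate whatever constraint remains on the constants $c_i^D$. Your treatment of the internal-with-identity pairs and of the paired type~$A$ and type~$D$ squares is correct and essentially identical to what the paper does.

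However, your case enumeration has real gaps. First, you are missing one type of closed square: the one obtained by composing a right internal flow (paired with a left identity) with a left internal flow (paired with a right identity), and vice versa. This ``Case~3'' square is not covered by paired-square language, and the Koszul factor $(-1)^{|\x||\phi_2|}$ is what makes it contribute $-1$; you need to check it explicitly. Second, the ``closed squares encoding the algebra relation $m_2(m_2(x,\rho_i),\rho_j)=m_2(x,\rho_i\rho_j)$ via an A-side bordered triangle'' do not occur in the domain $\fmnpp$: compatible pairs require both bordered flows to be of the \emph{same} type $\rho$, so you never see an $A$-side triangle on the level of closed flows. Third, and most importantly, the paper shows that annular $\beta$-degenerations built from bordered-bordered unions \emph{cannot exist} (the forced $\alpha$-arc orientations make the second pair of bordered flows an invalid $\rho$ type); this is what guarantees no extra constraint arises from \ref{itm:s2}. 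You treat this as just another source of sign equations, but you need to rule it out.

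Once the enumeration is corrected, the only nontrivial constraint comes from annular $\alpha$-degenerations in which the two closed rectangles are unions of bordered flows. The paper's analysis shows these must be of types $\rho_2$ and $\rho_{123}$, and the resulting equation is a single condition on $c_2^D c_{123}^D$ in terms of the $c_i^A$'s and the grading, which can always be satisfied by an appropriate type~$D$ class. So your ``finite system'' reduces to one equation in one unknown pair, not a case split over all $\rho\in\{\rho_1,\rho_2,\rho_3,\rho_{123}\}$ and all $P$.
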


\begin{proof}
We begin by observing that by construction, any type $D$ bordered sign assignment $\STD$ (with the correct signed pointed matched circle) when paired with $\STA$ will result in a function $\overline{\STA}  \otimes \overline{\STD}$ that satisfies properties \ref{itm:s2} and \ref{itm:s3}. To see \ref{itm:s2}, first note that any disk-like $\beta$-degeneration is composed of two bigons. As such, it must result as a pair of unions of left internal flows with the appropriate right identity flow $\phi_1 \cup \psi_\ga$ and $\phi_2 \cup \psi_\ga$, or vice versa. In the first case, 
\[\left(\overline{\STA}  \otimes\overline{\STD}\right)(\phi_1 \cup \psi_\ga)\left(\overline{\STA}  \otimes \overline{\STD}\right)(\phi_1 \cup \psi_\ga)=\STA(\phi_1)\STA(\phi_2)=-1\]
 by property \ref{itm:s2a}. In the second case, we instead have 
 \[\left(\overline{\STA}  \otimes \overline{\STD}\right)(\psi_\ga \cup \phi_1)\left(\overline{\STA}  \otimes \overline{\STD}\right)(\psi_\ga \cup \phi_2)=(-1)^{2|\ga|}\STD(\phi_1)\STD(\phi_2)=-1\]
  by property \ref{itm:s2d}. In either case, property \ref{itm:s2} holds. 

Now let $\beta_p,\beta_q$ be the labels on the boundary components of an annular $\beta$-degeneration, assuming without loss of generality that $p<q$. If $p,q\leq n$, then we can construct it as the unions two left internal flows with a right identity flow and use the above argument. Similarly, if $p,q \geq n+1$ we can construct is as the unions two right internal flows with a left identity flow. It remains to deal with the case that $p \leq n$ and $q \geq n+1$. Then the annulus must be constructed as two closed rectangles, each comprised as the union of two bordered flows $\phi_1 \cup \phi_2$ of some type $\rho_i$ and $\phi_3 \cup \phi_4$ of some type $\rho_j$. See Figure~\ref{fig:annulus}. Because these are closed rectangles, there must be two different $\alpha$'s occupied, and since the rectangles are comprised of bordered flows they must specifically be $\alpha_m$ and $\alpha_{m+1}$. Thus one pair of bordered flows has starting generators that occupy $\alpha_1^a$ and the other must have starting generators that occupy $\alpha_2^a$. Since type $\rho_2$ flows are only the type with starting generators on $\alpha_2^a$, we may assume that $\phi_1$ and $\phi_2$ are of type $\rho_2$. However, if we orient the $\alpha$ arcs as according to the signed pointed matched circle, we immediately see that $\phi_3$ and $\phi_4$ cannot possibly be of any valid $\rho$ type. Therefore there are no such $\beta$-degenerations. Since this was the last remaining case, property \ref{itm:s2} holds.

\begin{figure}[h]
\centering
\labellist
	\pinlabel $a$ at 142 624
	\pinlabel $b$ at 401 626
	\pinlabel $x$ at 257 624
	\pinlabel $y$ at 295 622
	\pinlabel $\phi_1$ at 275 723
	\pinlabel $\phi_2$ at 275 685
	\pinlabel $\phi_3$ at 275 570
	\pinlabel $\phi_4$ at 275 531
	\endlabellist
 \includegraphics[scale=.5]{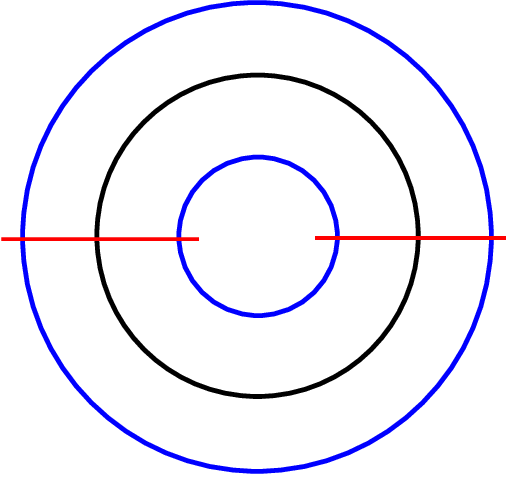}
 	\caption{An annular $\beta$-degeneration that involves both right and left bordered flows. Note that while we cannot draw these domains on a Heegaard diagram, they are nonetheless all valid flows and form such a degeneration on the formal level. See the proof of Theorem~\ref{thm:compatexist} for a discussion on labels and orientations.}\label{fig:annulus}
\end{figure}

For property \ref{itm:s3}, we note that there are five cases:
\begin{itemize}
\item[-] the unions of four left internal flows with four right identity flows,
\item[-] the unions of four right internal flows with four left identity flows,
\item[-] the union of a right internal flow with a left identity flow and the union of a left internal flow with a right identity flow, noting that such a pair of unions when performed in either order form a full square in the closed case,
\item[-] a paired square of type $A$,
\item[-] a paired square of type $D$.
\end{itemize}
 {\bf{Case 1.}} Let $\phi_1 \cup \psi_\ga, \phi_2 \cup \psi_\ga, \phi_3 \cup \psi_\ga,$ and $\phi_4 \cup \psi_\ga$ be the four unions. Then
 \begin{align*}
\prod_{i=1}^4\left(\overline{\STA}  \otimes \overline{\STD}\right)(\phi_i \cup \psi_\ga) &= \prod_{i=1}^4 \STA(\phi_i) \\
&= -1,
 \end{align*}
 where the first equality comes from the definition of $\overline{\STA} \otimes \overline{\STD}$, and the second from property \ref{itm:s3a}. Thus property \ref{itm:s3} holds for $\overline{\STA} \otimes \overline{\STD}$. \\
 {\bf{Case 2.}} Let $\psi_\ga \cup \phi_1, \psi_\ga \cup \phi_2, \psi_\ga \cup \phi_3,$ and $\psi_\ga \cup \phi_4$ be the four unions. Then
 \begin{align*}
\prod_{i=1}^4\left(\overline{\STA}  \otimes \overline{\STD}\right)(\psi_\ga \cup \phi_i) &=(-1)^{2|\ga|}\prod_{i=1}^4 \STD(\phi_i) \\
&= -1,
 \end{align*}
 where the first equality comes from the definition of $\overline{\STA} \otimes\overline{\STD}$, and the second from property \ref{itm:s3d}. Thus property \ref{itm:s3} holds for $\overline{\STA} \otimes \overline{\STD}$. \\
 {\bf{Case 3.}} Suppose we have $\phi_1:\w \to \x$ as the left internal flow, and $\phi_2: \y \to \z$ as the right internal flow. Then the full square would give us 
\begin{align*}
\left(\overline{\STA}  \otimes \overline{\STD}\right)(\phi_1 \cup \phi_\y)  \left(\overline{\STA}  \otimes \overline{\STD}\right)(\phi_\x \cup \phi_2) &   \left(\overline{\STA} \otimes \overline{\STD}\right)(\phi_\w \cup \phi_2) \\
\cdot \left(\overline{\STA}  \otimes \overline{\STD}\right)(\phi_1 \cup \phi_\z) &=\STA(\phi_1)^2\STD(\phi_2)^2(-1)^{|\w|+|\x|}.
\end{align*}
Since there is an internal flow from $\w$ to $\x$, we know that $(-1)^{|\w|+|\x|}=-1$, and thus property \ref{itm:s3} holds. \\
{\bf{Case 4.}} Let $\phi_1:\x \to \y$ and $\phi_4: \gs \to \gt$ be the left bordered flows, $\phi_2:\y \to \gt$ and $\phi_3:\x \to \gs$ be the left internal flows, and $\psi:\ga \to \gb$ be the right bordered flow. See the left side of Figure~\ref{fig:paired_sq} for a concrete example of such a paired square. Then we have
\begin{align*}
\left(\overline{\STA} \otimes \overline{\STD}\right)(\phi_1 \cup \psi_1)  \left(\overline{\STA} \otimes \overline{\STD}\right)(\phi_2 \cup \phi_\gb) & \left(\overline{\STA} \otimes \overline{\STD}\right)(\phi_3 \cup \phi_\ga) \\
\cdot \left(\overline{\STA} \otimes \overline{\STD}\right)(\phi_4 \cup \psi_1)&=(-1)^{|\x| + |\s|} \STD(\psi_1)^2 \prod_{i=1}^4 \STA(\phi_i)\\
&=(-1)^{|\x|+|\gs|} \\
&=-1.
\end{align*}
The first equality comes from the definition of $\overline{\STA} \otimes \overline{\STD}$, and the second from property \ref{itm:a1}. For the last equality, since there is an internal flow from $\x$ to $\s$, their grading differs by one and thus $(-1)^{|\x| + |\s|}=-1$. Then \ref{itm:s3} holds for paired squares of type $A$. \\
{\bf{Case 5.}} Let $\phi_1:\x \to \y$ and $\phi_4: \gs \to \gt$ be the right bordered flows of type $\rho$, $\phi_2:\y \to \gt$ and $\phi_3:\x \to \gs$ be the right internal flows, and $\psi:\ga \to \gb$ be the left bordered flow. See the right side of FIgure~\ref{fig:paired_sq} for a concrete example of such a paired square. Then we have
\begin{align*}
\left(\overline{\STA} \otimes \overline{\STD}\right)(\psi_1 \cup \phi_1)\left(\overline{\STA} \otimes \overline{\STD}\right)(\phi_\gb \cup \phi_2) & \left(\overline{\STA} \otimes \overline{\STD}\right)(\phi_\ga \cup \phi_3) \\
\cdot \left(\overline{\STA} \otimes \overline{\STD}\right)(\psi_1 \cup \phi_4) &=(-1)^{|\ga| + |\gb|} \STA(\psi_1)^2 \Pi_{i=1}^4 \STD(\phi_i)\\
&=(-1)^{|\ga| + |\gb|}\cdot(-1)^{|\rho|+1}\\
&=(-1)^{2|\rho|+1}\\
&=-1
\end{align*}
The first equality comes from the definition of $\overline{\STA} \otimes \overline{\STD}$, and the second from property \ref{itm:d1}.  For the third equality, we know that $\ga$ and $\gb$ are connected by a bordered flow of type $\rho$, so we must have that $|\ga| + |\gb|=|\rho|$. Therefore \ref{itm:s3} also holds for paired squares of type $D$.

By the above, the only place where an issue can arise in finding a compatible $\STD$ is property \ref{itm:s1}. As with $\beta$-degenerations, any disk-like $\alpha$-degeneration must be comprised of two closed flows such that each is the union of an internal flows and an identity flow, and thus covered by property \ref{itm:s1a} or \ref{itm:s1d}. That leaves annular $\alpha$-degenerations. 
First, any annular degeneration where $\alpha_p,\alpha_q$ are such that both $p,q \leq n$ or $p,q \geq n+1$ must be comprised of internal flows just as the disk-like degenerations were, and as such would be covered by property \ref{itm:s1a} or \ref{itm:s1d} once again. Thus let us assume $p \leq n$ and $q \geq n+1$. As in the $\beta$-annulus case, we must have two closed rectangles, each comprised of the union of a left and right bordered flow. Since these are closed rectangles, we must have two distinct $\alpha$'s occupied, and since they are unions of bordered flows these $\alpha$'s must be $\alpha_m$ and $\alpha_{m+1}$. Then one pair of bordered flows must have starting generators who occupy $\alpha_1^a$ and ending generators who occupy $\alpha_2^a$, and the other pair of flows will have the opposite. Since flows of type $\rho_2$ are the only bordered flows with starting generators on $\alpha_2^a$, it must be one of the two types of bordered flows on the annulus. By orienting the $\alpha$-arcs (see Figure~\ref{fig:annulus}) according to the choice of signed pointed matched circle for $\STA$, we see that the other flow type must $\rho_{123}$. Furthermore, as discussed in Propositions \ref{prop:fourA} and \ref{prop:fourB}, one can always predict how $\STA$ and $\STD$ behave under changes in the permutation and sign profiles of the starting generator. Thus we may assume without loss of generality that $p=n$, $q=n+1$ and both are oriented such that their intersection with $\alpha_1^a$ is positive (with the convention, as always, that the $\alpha$ comes first). 

\begin{figure}[h]
\centering
\labellist
	\pinlabel $\x_1$ at 152 558
	\pinlabel $\y_1$ at 250 558
	\pinlabel $\x_2$ at 357 558
	\pinlabel $\y_2$ at 450 558
	\pinlabel $\phi_1$ at 240 470
	\pinlabel $\phi_2$ at 240 620
	\pinlabel $\phi_3$ at 365 470
	\pinlabel $\phi_4$ at 365 620
	\endlabellist
 \includegraphics[scale=.5]{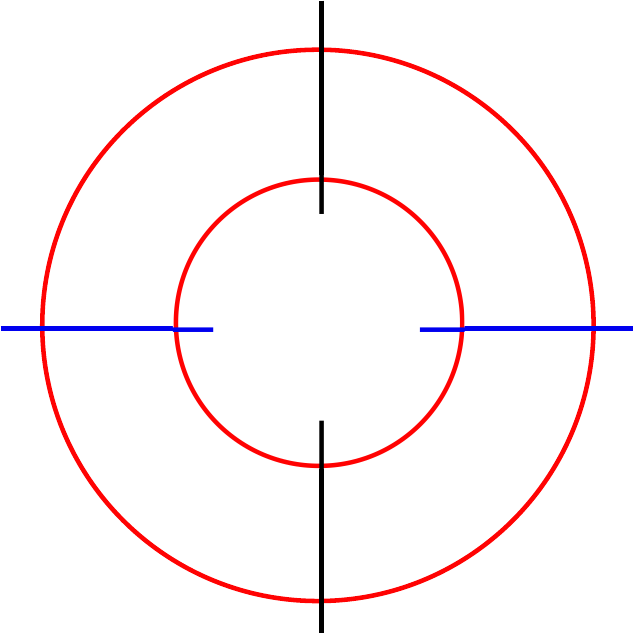}
 	\caption{An annular $\alpha$-degeneration that involves both right and left bordered flows. Note that while we cannot draw these domains on a Heegaard diagram, they are nonetheless all valid flows and form such a degeneration on the formal level. See the proof of Theorem~\ref{thm:compatexist} for a discussion on labels and orientations.}\label{fig:annulus}
\end{figure}

Then we have four flows; a right bordered flow $\phi_1: \x_1 \to \y_1$ of type $\rho_{123}$, a right bordered flow $\phi_2: \y_1 \to \x_1$ of type $\rho_2$, a left bordered flow $\phi_3: \x_2 \to \y_2$ of type $\rho_{123}$, and a left bordered flow $\phi_4: \y_2 \to \x_2$ of type $\rho_2$. Again, as noted above, since the effects of changes in sign profile and permutation are well understood, we may further assume without loss of generality that all generators have identity permutations and identity sign profiles away from the annulus. Recalling our classification of bordered sign assignments, this implies that $\STA(\phi_2)=c_2^A,\STD(\phi_3)=c_2^D,$ and $\STD(\phi_4)=c_{123}^D$.  

However, $\STA(\phi_1)$ is a bit more involved. Its value can be derived as the product of signs of three flows $\psi_1, \psi_2,$ and $\psi_3$, that are of type $\rho_1, \rho_2$ and $\rho_3$ respectively, using a series of two bordered triangles. The sign of $\psi_1$ is $c_1^A$ by construction, but $\psi_3$ differs from the flow used to define $c_3^A$ (which we will call $\psi_3^c$) in the orientation of $\beta_m$, and $\psi_2$ may also differ from the flow used to define $c_2^A$ (which we will call $\psi_2^c$) depending on the choice of signed pointed matched circle. However, $\psi_3$ can be related to $\psi_3^c$ by a bordered square, meaning the sign of $\psi_3$ will differ from $c_3^A$ by a factor dependent on the internal sign assignment. Similarly, if $\psi_2$ and $\psi_2^c$ differ, they can be related by a bordered square and the sign $\psi_2$ will differ from $c_2^A$ by another factor dependent on the internal sign assignment. Since the internal sign assignment is fixed, we will simply call the of product of these two factors $c_{\mathrm{int}}^A$ and remark that it is a known quantity in the following equations. Given all of this, we can now say that $\STA(\phi_1)=c_1^Ac_2^Ac_3^Ac_{\mathrm{int}}^A$.

We need the product of the signs of these flows once paired to be 
\begin{align*}
\left(\overline{\STA} \otimes \overline{\STD}\right)(\phi_1 \cup \phi_3)\left(\overline{\STA} \otimes \overline{\STD}\right)(\phi_2 \cup \phi_4) &= (-1)^{\mid\x_1\mid + \mid\y_1\mid}c_2^A(c_1^Ac_2^Ac_3^Ac_{\mathrm{int}}^A)c_2^Dc_{123}^D\\ &=1.
\end{align*}
We know that $|\x_1| + |\y_1|=|\rho_2|=|\rho_{123}|$, which is in turn determined by the choice of signed matched pointed circle, and the value of each $c_i^A$ was fixed at the beginning in our choice of class for $\STA$. 
In order for compatibility to hold true, we must have that $c_2^Dc_{123}^D=(-1)^{\mid\rho_2\mid}c_1^Ac_3^Ac_{\mathrm{int}}^A$. Recalling that a type $D$ sign assignment is given by the tuple $(c_1^Dc_3^D,c_2^Dc_{123}^D)$, we see that there clearly must be a sign assignment that satisfies this equality.
\end{proof}

We next show that a pair of compatible bordered sign assignments induces a unique closed sign assignment.

\begin{thm}\label{thm:ext}
Suppose a right type $A$ bordered sign assignment $\STA$ of power $m$ and a left type $D$ bordered sign assignment $\STD$ of power $n$ are compatible. Then the sign function ${\STA\otimes \STD} \colon \fmnpp \to \{\pm 1\}$ can be uniquely extended to a (closed) sign assignment of power $m+n$.
\end{thm}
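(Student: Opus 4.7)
The plan is to bootstrap from the existence and uniqueness (up to gauge equivalence) of closed sign assignments established in \cite{hfz}. Fix any closed sign assignment $S_0$ of power $m+n$, and consider the ratio $r(\phi) := S_0(\phi) \cdot (\STA \otimes \STD)(\phi) \in \{\pm 1\}$ defined on $\fmnpp$. Since both $S_0|_{\fmnpp}$ and $\STA \otimes \STD$ satisfy \ref{itm:s1}--\ref{itm:s3} on $\fmnpp$ (the second by the compatibility hypothesis), $r$ is a $\{\pm 1\}$-valued cocycle: it multiplies to $+1$ on each boundary degeneration in $\fmnpp$ and around each square in $\fmnpp$.

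For existence, I would exhibit $r$ as a coboundary. Pick a basepoint $\x_0 \in \gmnpp$, set $u(\x_0) = 1$, and for any $\x \in \gmnpp$ define $u(\x)$ by integrating $r$ along a path of $\fmnpp$-flows from $\x_0$ to $\x$; the cocycle property ensures path independence, while connectedness of $\gmnpp$ under $\fmnpp$-flows---achievable using bigons and internal rectangles on each side, together with unions of compatible bordered flows to switch idempotents---gives a well-defined $u \colon \gmnpp \to \{\pm 1\}$. After extending $u$ arbitrarily to $\mathcal{G}_{m+n} \setminus \gmnpp$, the gauge-modified map $S(\phi) := u(\x) S_0(\phi) u(\y)$ is a closed sign assignment agreeing with $\STA \otimes \STD$ on $\fmnpp$.

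For uniqueness, given two extensions $S_1, S_2$, the gauge-uniqueness of closed sign assignments produces $v \colon \mathcal{G}_{m+n} \to \{\pm 1\}$ with $S_2(\phi) = v(\x) S_1(\phi) v(\y)$ for any $\phi \colon \x \to \y$. Agreement on $\fmnpp$ forces $v(\x) v(\y) = 1$ whenever $\phi \in \fmnpp$, so $v$ is constant on $\gmnpp$; normalize so $v \equiv 1$ there. For $\x \not\in \gmnpp$, the permutation $\sigma_{\x}$ violates the right/left partition at some pair $(i, \sigma_{\x}(i))$ with $i \leq m$ and $\sigma_{\x}(i) > m$; I would construct a rectangle flow $\phi$ incident to $\x$ that strictly reduces this defect, and fit $\phi$ into a square whose three remaining flows lie in $\fmnpp$. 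Invoking \ref{itm:s3} for both $S_1$ and $S_2$ on this square yields $S_1(\phi) = S_2(\phi)$, propagating $v \equiv 1$ to $\x$. Induction on the partition defect gives $v \equiv 1$ globally, so $S_1 = S_2$.

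The hard part is the combinatorial step at the end: for each $\x \not\in \gmnpp$, one must construct a square containing a rectangle flow incident to $\x$ together with three flows in $\fmnpp$. This requires a careful case analysis of how $\sigma_{\x}$ can fail to respect the partition, and the explicit exhibition of companion flows (likely bigons and partition-respecting rectangles) realizing the desired square configurations from Figures~\ref{fig:z-int-sq1} and \ref{fig:z-bord-sq}. The bulk of the technical work of the proof should lie in establishing this reduction.
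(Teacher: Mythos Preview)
Your existence argument contains the essential gap. The claim that ``the cocycle property ensures path independence'' is not automatic: the cocycle $r$ multiplies to $+1$ only around the specific $2$-cells given by squares and degenerations in $\fmnpp$, so integrating $r$ is well-defined only if every closed walk in the flow-graph on $\gmnpp$ is null-homotopic in the $2$-complex built from those cells. Establishing this simply-connectedness (equivalently, uniqueness up to gauge of sign assignments on $\fmnpp$) is exactly the hard work, and it is where the paper spends most of Section~\ref{sec:pairing}: it models $\fmnp$ on a toroidal grid $G_{m+n}$, proves the relevant Cayley $2$-complex $W_{m,n}$ is simply connected (Proposition~\ref{prop:planar-thin}), and then bootstraps through empty planar rectangles, all rectangles with identity sign profile, bigons, and finally arbitrary sign profiles (Propositions~\ref{prop:all-thin}--\ref{prop:fmnp} and Lemmas~\ref{lem:theta}--\ref{lem:theta-omega}). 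Your proposal correctly identifies the cohomological shape of the problem but skips precisely this step.

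Your uniqueness argument, as written, cannot work. In a square the four generators are $\x,\y,\y',\z$ with flows $\phi_1\colon\x\to\y$, $\phi_2\colon\y\to\z$, $\phi_3\colon\x\to\y'$, $\phi_4\colon\y'\to\z$. If three of these flows lie in $\fmnpp$, their endpoints already cover all four vertices, so $\x\in\gmnpp$---contradicting your hypothesis. More generally, any square containing a flow incident to $\x\notin\gmnpp$ has a \emph{second} flow incident to $\x$, so you cannot isolate a single ``bad'' flow this way. The paper sidesteps this entirely: once uniqueness up to gauge on $\fmnpp$ is in hand, one takes any closed sign assignment $S$ of power $m+n$ (which exists by \cite{hfz}), observes its restriction to $\fmnpp$ must be gauge equivalent to $\overline{\STA}\otimes\overline{\STD}$, and transports the gauge to obtain an extension; uniqueness of the extension then follows from the uniqueness of closed sign assignments. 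So the technical weight sits in the existence half, not where you placed it.
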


Outline of the proof of Theorem~\ref{thm:ext}:
\begin{enumerate}
\item We begin by considering a certain subset $\plthin$ of $\fmnpp$. We show that there is a unique (up to gauge equivalence) map $S' \colon \plthin \to \{\pm 1\}$ satisfying the properties  from Definition~\ref{def:s} on its domain.  Thus, $S'$ must be the restriction of  $\overline{\STA}\otimes \overline{\STD}\colon \fmnpp\to \{\pm 1\}$ to $\plthin$. In a series of steps, we then show that there is a unique extension of $S'$ to a function on all formal flows $\fmnp$ in $\fmnpp$ whose starting and ending generators have sign profile $\bf{1}$.
\item We show that there is a unique sign assignment on all bigons in $\fmnpp$.
\item We show that a sign assignment on all bigons and all flows in $\fmnp$ extends uniquely to a sign assignment on $\fmnpp$.
\item Given a sign assignment $S$ on $\mathcal{F}_{m+n}$, its restriction to $\fmnpp$ is a sign assignment. By the above, it must agree with $\overline{\STA}\otimes \overline{\STD}$. By uniqueness of closed sign assignments, we've shown that $\overline{\STA}\otimes \overline{\STD}$ extends to a sign assignment of power $m+n$.
\end{enumerate}

We will first assume that $P = (++)$. At the end of Section~\ref{ssec:pair-epsilon-any}, we argue that we get the same results for the other three sign sequences.

 \subsection{Restricting to  generators in $\gmn$ with the identity sign profile}\label{ssec:pair-epsilon-1}
 
Some of the proofs in this section generalize arguments from \cite[Section 4]{most}, so we review some relevant terminology here.  

Let $G_{\ell}$ be an $\ell \times \ell$ toroidal grid diagram drawn in the plane as in \cite{most}, with  horizontal curves oriented left to right and labeled $\alpha_1, \ldots, \alpha_\ell$ from bottom to top, and vertical curves oriented upward and labeled $\beta_1, \ldots, \beta_\ell$ from left to right. See Figure~\ref{fig:grid-mn}. 
We will think of the generators of $G_\ell$ as the formal generators in $\mathcal G_\ell$ with sign profile ${\bf 1}$. We will think of the set $\mathrm{Rect}_\ell$ of rectangles on $G_{\ell}$ as a set of formal rectangles in $\mathcal F_{\ell}$. 
Let  $\mathrm{Rect}^{\circ}_\ell$ be the set of empty rectangles on $G_{\ell}$. We will say a rectangle on $G_\ell$ is \emph{planar}, if it does not intersect the interior of the rightmost vertical annulus, or the interior of the topmost horizontal annulus. As in \cite{most}, we say a rectangle is \emph{thin} if has width one (a thin rectangle may have any height). Following the notation from \cite{most}, we denote the sets of thin rectangles and planar thin rectangles by $\mathrm{tRect}$ and $\mathrm{tRect}^{\ast}$, respectively (as the index $\ell$ is usually clear, it is not included in the notation).
We will denote by  $\Gamma_\ell$ the Cayley graph with vertices $V(\Gamma_\ell)$ the generators of $G_{\ell}$ and edges $E(\Gamma_\ell)$ corresponding to the planar thin rectangles on $G_\ell$ (that is, there is an edge between $\x$ and $\y$ if and only if there is a planar thin rectangle from $\x$ to $\y$ or one from $\y$ to $\x$). This was denoted simply $\Gamma$ in \cite{most}.

Let 
\[\mathcal G_{m,n}' =\{ \x\cup \y \in \mathcal G_{m+n} \mid  (\x, \y)\in \GmA^R \times  \GnA^L, \e(\x) = {\bf 1}, \e(\y) = {\bf 1}\}.\] With the above identification in mind, observe that $\mathcal G_{m,n}'$  is exactly the generators of $G_{m+n}$ supported in $\mathrm{BL}_m \cup \mathrm{TR}_n$, where $\mathrm{BL}_m$ is the bottom-left subgrid formed by $\alpha_1, \ldots, \alpha_{m+1}$ and $\beta_1, \ldots, \beta_m$, and $\mathrm{TR}_n$ is the top-right subgrid formed by $\alpha_m, \ldots, \alpha_{m+n}$ and $\beta_{m+1}, \ldots, \beta_{m+n}$. Let $\fmnp\subset \fmn\subset \mathcal F_{m+n}$ be the set of rectangles on $G_{m+n}$ with starting and ending generator both in $\mathcal G_{m,n}'$. Note that this set is precisely the set of all flows with starting and ending generators in $\mathcal G_{m,n}'$.
Let $\fempty \coloneqq \fmnp\cap \mathrm{Rect}^{\circ}_{m+n}$ be the set of empty rectangles on $G_{m+n}$ with starting and ending generator both in $\mathcal G_{m,n}'$. 
Let $\thin\subset \mathcal F_{m+n}$ and  $\plthin\subset \mathcal F_{m+n}$ be the sets of thin and planar thin rectangles, respectively, on $G_{m+n}$ with starting and ending generator both in $\mathcal G_{m,n}'$. Observe that $\plthin$ consists of the thin rectangles whose domain is fully supported in $\mathrm{BL}_m$ or $\mathrm{TR}_n$, together with the thin rectangles whose domain is the $1\times 1$ square $D_m$ given by intersection of the annulus cobounded by $\beta_m$ and $\beta_{m+1}$ and the annulus cobounded by $\alpha_{m}$ and $\alpha_{m+1}$.
See Figure~\ref{fig:grid-mn} for an illustration of $\mathrm{BL}_m$, $\mathrm{TR}_n$, and $D_m$.

\begin{figure}[h]
\centering
  \labellist
  	\pinlabel \textcolor{red}{$\alpha_1$} at -14 5
	\pinlabel $\rotatebox{90}{\textcolor{red}{\dots}}$ at -14 45
	\pinlabel \textcolor{red}{$\alpha_m$} at -14 80
	\pinlabel \textcolor{red}{$\alpha_{m+1}$} at -14 100
	\pinlabel $\rotatebox{90}{\textcolor{red}{\dots}}$ at -14 130
	\pinlabel \textcolor{red}{$\alpha_{m+n}$} at -14 160
	\pinlabel \textcolor{red}{$\alpha_1$} at -14 180
	\pinlabel $D_m$ at 92 92
	\pinlabel \textcolor{blue}{$\beta_1$} at 5 -10
	\pinlabel \textcolor{blue}{$\dots$} at 45 -10
	\pinlabel \textcolor{blue}{$\beta_m$} at 80 -10
	\pinlabel \textcolor{blue}{$\beta_{m+1}$} at 108 -10
	\pinlabel \textcolor{blue}{$\dots$} at 135 -10
	\pinlabel \textcolor{blue}{$\beta_{m+n}$} at 162 -10
	\pinlabel \textcolor{blue}{$\beta_1$} at 188 -10
   \endlabellist
 \includegraphics[scale=0.95]{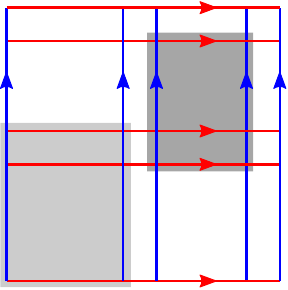}
 \vspace{.2cm}
 	\caption{The toroidal grid diagram $G_{m+n}$. The subgrids $\mathrm{BL}_m$ and $\mathrm{TR}_n$ are shaded in light and dark grey, respectively. The annulus cobounded by $\beta_m$ and $\beta_{m+1}$ and the annulus cobounded by $\alpha_{m}$ and $\alpha_{m+1}$ intersect in the square $D_m$.}\label{fig:grid-mn}
\end{figure}

\begin{prop}\label{prop:planar-thin}
There is a unique (up to gauge equivalence) map $S' \colon \plthin\to \{\pm 1\}$ satisfying  \ref{itm:s3}  from Definition~\ref{def:s}. 
\end{prop}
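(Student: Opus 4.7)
The plan is to settle the two halves of the statement separately: existence via restriction of a known closed sign assignment, and uniqueness via a cocycle argument on an auxiliary graph, where the main difficulty is a combinatorial lemma that every cycle in that graph is, modulo squares, trivial.

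For existence, \cite{hfz} provides a sign assignment $S\colon \mathcal F_{m+n}\to\{\pm 1\}$ satisfying \ref{itm:s1}, \ref{itm:s2}, and \ref{itm:s3}. Since $\plthin\subset\mathcal F_{m+n}$, and since every square of planar thin rectangles with all four flows in $\plthin$ is also a square in $\mathcal F_{m+n}$, the restriction $S' \coloneqq S|_{\plthin}$ satisfies \ref{itm:s3} on its domain, giving existence.

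For uniqueness, suppose $S'_1$ and $S'_2$ both satisfy \ref{itm:s3} on $\plthin$. Their pointwise ratio $R \coloneqq S'_1/S'_2\colon \plthin\to\{\pm 1\}$ satisfies $R(\phi_1)R(\phi_2)=R(\phi_3)R(\phi_4)$ on every square. We seek $u\colon \mathcal G_{m,n}' \to \{\pm 1\}$ with $R(\phi)=u(\x)u(\y)$ whenever $\phi\colon\x\to\y$ lies in $\plthin$; such a $u$ is exactly a gauge equivalence from $S'_2$ to $S'_1$. Form the graph $\Gamma'$ whose vertex set is $\mathcal G_{m,n}'$ and whose edges are the elements of $\plthin$. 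Identifying $\{\pm 1\}$ additively with $\mathbb Z/2$, the function $R$ is a $1$-cochain on $\Gamma'$, and the desired $u$ exists if and only if $R$ is a coboundary, equivalently if and only if $R$ evaluates trivially on every cycle in $\Gamma'$.

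The main obstacle is therefore to show that every cycle in $\Gamma'$ is generated, via the square relations of \ref{itm:s3}, by trivial cycles. This follows by adapting \cite[Section 4]{most}. Within each of the subgrids $\mathrm{BL}_m$ and $\mathrm{TR}_n$, the planar thin rectangles in $\plthin$ are exactly the planar thin rectangles on the respective subgrid, and the square relations are identical, so the arguments of \cite[Section 4]{most} apply verbatim to show that cycles entirely within either subgrid decompose into squares. The new feature is the single $1\times 1$ planar thin rectangle supported in $D_m$: it is the only edge in $\Gamma'$ that moves an occupied intersection from the $\mathrm{BL}_m$ side of the boundary to the $\mathrm{TR}_n$ side (or conversely), so any cycle traverses it an even number of times. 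A direct check shows the $D_m$-rectangle forms a square with every planar thin rectangle in $\plthin$ whose moving coordinates are disjoint from those of $D_m$; using these squares repeatedly, one rewrites any cycle, modulo square relations, as a concatenation of a cycle entirely inside $\mathrm{BL}_m$ and a cycle entirely inside $\mathrm{TR}_n$, at which point \cite[Section 4]{most} closes the argument.
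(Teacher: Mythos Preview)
Your overall strategy mirrors the paper's---existence by restriction, uniqueness via a cocycle argument on the graph with vertex set $\mathcal G_{m,n}'$ and edges $\plthin$---but there is a genuine gap. You assert that cycles decompose ``into squares'' and cite \cite[Section~4]{most}, yet the $2$-complex in \cite{most} has both square and hexagon $2$-cells, corresponding to the commutation relations $\tau_i\tau_j=\tau_j\tau_i$ (for $|i-j|>1$) and the braid relations $\tau_i\tau_{i+1}\tau_i=\tau_{i+1}\tau_i\tau_{i+1}$, and both are needed for simple-connectedness. On $\plthin$ the only squares in the sense of \ref{itm:s3} with all four flows planar-thin are the disjoint ones: the alternate decomposition of an L-shaped composite of two adjacent thin rectangles involves a width-two rectangle, which is not in $\plthin$. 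Hence \ref{itm:s3} on $\plthin$ alone gives you property (Sq) but not (Hex), and your ratio $R$ need not vanish on hexagons. Already for $m=3$ the Cayley graph of $S_3$ is a single hexagon with no commutation squares at all, so the square-only complex is not simply connected and your coboundary argument fails.

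The paper addresses this by observing that the functions it actually needs on $\plthin$ are restrictions of sign assignments on the larger set $\fmnp$, where two applications of \ref{itm:s3} to non-thin rectangles yield (Hex) on $\plthin$; it then proves uniqueness among functions satisfying both (Sq) and (Hex) by showing the resulting $2$-complex $W_{m,n}$ (with both square and hexagon $2$-cells) is simply connected. Your reduction step---commuting $D_m$-edges past disjoint rectangles to separate a cycle into $\mathrm{BL}_m$ and $\mathrm{TR}_n$ pieces---is also incomplete for the same reason: after commuting, one must still rewrite the inner word so that it avoids $\tau_{m-1}$ and $\tau_{m+1}$ before the two $\tau_m$'s can be brought together and cancelled, and this rewriting requires the braid relation, not just commutation.
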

\begin{proof}
By  \cite[Proposition 4.7]{most}, there is a function  $S$, unique up to a gauge transformation,  from $\mathrm{tRect}^{\ast}_{m+n}$ to $\{\pm 1\}$ that satisfies the following two properties
\begin{itemize}
\item[\mylabel{itm:sq}{(Sq)}]  If the edges in $E(\Gamma_{m+n})$ corresponding to rectangles $r_1, \ldots, r_4$ form a square, then 
\[S(r_1)\cdots S(r_4) = -1.\]
\item[\mylabel{itm:hex}{(Hex)}] If the edges in $E(\Gamma_{m+n})$ corresponding to rectangles $r_1, \ldots, r_6$ form a hexagon, then \[S(r_1)\cdots S(r_6) = 1.\]
\end{itemize}
 Since $\plthin$ is a subset of $\mathrm{tRect}^{\ast}_{m+n}$, it follows that there is a function from $\plthin$ to $\{\pm 1\}$ that satisfies \ref{itm:sq} and \ref{itm:hex}. (We currently make no claim of uniqueness.) 
By \cite[Lemma 4.6]{most}, any sign assignment on $\mathrm{Rect}^{\circ}_{m+n}$ restricts to such a function on  $\mathrm{tRect}^{\ast}_{m+n}$. The argument in the proof of \cite[Lemma 4.6]{most}  implies that this is also true if we restrict our flows to only a subset of the generators of $G_{m+n}$. In particular, given a sign assignment on $\fmnp$, the fact that it satisfies \ref{itm:s1} implies that it satisfies \ref{itm:sq}, whereas given any hexagon, two applications of \ref{itm:s1} imply \ref{itm:hex}. Therefore,  $\overline{\STA}\cdot \overline{\STD} \vert_{\plthin}$ satisfies \ref{itm:sq} and \ref{itm:hex}.

We now show uniqueness, i.e.\ that any other  function from $\plthin$ to $\{\pm 1\}$ that satisfies \ref{itm:sq} and \ref{itm:hex} is gauge equivalent to  $\overline{\STA}\cdot \overline{\STD} \vert_{\plthin}$.

Consider the Cayley graph $\Gamma_{m+n}$ with vertices $V(\Gamma_{m+n})$ the generators of $G_{m+n}$ and edges $E(\Gamma_{m+n})$ corresponding to planar thin rectangles. Let $W_{m+n}$ be the $2$-complex obtained by attaching $2$-cells to $\Gamma_{m+n}$ corresponding to squares and hexagons; this was denoted $W$ in \cite[Proposition 4.7]{most}.

 Let $\Gamma_{m,n}$ be the induced subgraph of $\Gamma_{m+n}$ on $V(\Gamma_{m,n}) = \mathcal G_{m+n}'$. Observe that $E(\Gamma_{m,n}) = \mathrm{t}{\mathcal F_{m+n}^{\ast}}'$. Attach $2$-cells to $\Gamma_{m,n}$ corresponding to squares and hexagons, as in  \cite[Proposition 4.7]{most}, to obtain a $2$-complex $W_{m,n}$. Note that $W_{m,n}$ is simply the induced subcomplex of $W_{m+n}$ with $0$-skeleton $V(\Gamma_{m,n})$. By the argument at the end of the proof of  \cite[Proposition 4.7]{most}, if we can show that $W_{m,n}$ is connected and simply connected, we are done.

Partition the set $\mathcal G_{m,n}' $ as $\mathcal G_{m,n}^1\sqcup \mathcal G_{m,n}^2$ where $\mathcal G_{m,n}^i =\{ \x\cup \y \in \mathcal G_{m,n}' \mid   \s(\x) = i\}$. On the grid $G_{m+n}$, we can think of  $\mathcal G_{m,n}^1$ (resp.\ $\mathcal G_{m,n}^2$) as the generators supported in $\mathrm{BL}_m \cup \mathrm{TR}_n$ whose $\alpha_m$ component is on some $\beta_j$ with $j\leq m$ (resp.\ $j\geq m+1$). Let $\Gamma^i$ be the induced subgraph of $\Gamma_{m,n}$ with vertex set $\mathcal G_{m,n}^i$. Similarly, let $W^i$ be the induced subcomplex of $W_{m,n}$ with $0$-skeleton $\mathcal G_{m,n}^i$. Clearly $\Gamma^1$ is isomorphic to $\Gamma_m$, and $\Gamma^2$ to $\Gamma_n$. Thus, by \cite[Proposition 4.7]{most}, $W^1$ is connected and simply-connected, and so is $W^2$.

Let $\x\in \mathcal G_{m,n}'$ be the generator $\{\alpha_i\cap \beta_i \mid 1\leq i\leq m+n\}\subset G_{m+n}$. The planar thin rectangle starting at $\x$ and with domain the  $1\times 1$ square $D_m$ defined earlier demonstrates that $W_{m,n}$ is connected. To see this, we note that it corresponds to an edge connecting $W^1$ and $W^2$, which are connected subcomplexes whose $0$-skeleta together comprise the $0$-skeleton for $W_{m,n}$. 

It remains to show that $W_{m,n}$ is simply connected. Suppose $C$ is a cycle in the $1$-skeleton of $W_{m,n}$. If $C$ is fully contained in $W^1$ or in $W^2$, we've already shown it's contractible. So assume $C$ has some vertices in $W^1$ and some in $W^2$.

 It suffices to show  that a walk $v_0, v_1, \ldots, v_\ell$ with $v_0 = v_\ell = \x\in W^1$ is homotopic rel boundary to a walk in $W^1$. Assume the walk contains a vertex in $W^2$, otherwise we're done. We can think  of vertices as permutations in $S_{m+n}$, by identifying a vertex $v = \{\alpha_i\cap \beta_{\sigma(i)} \mid 1\leq i\leq m+n\}$ with the permutation $\sigma$. For example, $v_0 = \id_{S_{m+n}}$. 
Similarly, we can then think of edges as multiplication to the left by a transposition as follows. If $e$ is an edge between $v$ and $w$ corresponding to a planar thin rectangle (from $v$ to $w$, or from $w$ to $v$) in the annulus cobounded by $\beta_k$ and $\beta_{k+1}$, then $v = \tau_k w$, and equivalently $w = \tau_k v$, where $\tau_k$ is the transposition $(k, k+1)$. Then we can write the sequence of edges from $v_0$ to $v_l$ in the walk as the sequence  $\tau_{i_1}, \tau_{i_2}, \ldots , \tau_{i_\ell}$. Observe that we have $v_k = \tau_{i_1}\tau_{i_2}\cdots \tau_{i_k}$ for  $1\leq k\leq \ell$, and since $v_0 = v_\ell = \id_{S_{m+n}}$, we also have $\tau_{i_1}\tau_{i_2}\cdots \tau_{i_\ell} = \id_{S_{m+n}}$. 
 
 Let $s>0$ be the smallest index such that $v_s\in W^2$. This exists by the assumption that the walk contains a vertex in $W^2$. Let $t$ be the smallest index such that $t>s$ and $v_t\in W^1$. This exists since the walk ends back in $W^1$.  
 Since $v_{s-1}$ is in $W^1$ and $v_s =\tau_m v_{s-1}$ is in $W_2$, it must be that the edge between $v_{s-1}$ and $v_s$ corresponds to a planar thin rectangle with domain $D_m$. So $v_{s-1}(m) = m$ and $v_{s-1}(m+1) = m+1$, which implies $v_s(m)= m+1$ and $v_s(m+1) = m$. Analogously, $v_t(m) = m$ and $v_t(m+1) = m+1$. By definition of $s$ and $t$, the sequence $\tau_{i_{s+1}}, \tau_{i_{s+2}}, \ldots, \tau_{i_{t-1}}$ does not contain the transposition $\tau_m$. Since distant transpositions commute (i.e.\ $\tau_i\tau_j = \tau_j\tau_i$ whenever $|i-j|>1$), we can rewrite the product  $\sigma = \tau_{i_{s+1}} \tau_{i_{s+2}} \cdots \tau_{i_{t-1}}$ as $\sigma_1\sigma_2$, where $\sigma_1$ is a product of transpositions in $\{\tau_1, \ldots, \tau_{m-1}\}$, and $\sigma_2$ is a product of transpositions in $\{\tau_{m+1}, \ldots, \tau_{m+n}\}$. This corresponds to a homotopy in $W_{m,n}$ that uses $2$-cells corresponding to squares of disjoint planar thin rectangles. Further, since $v_t = \tau_m\sigma v_s$, we have $\sigma  = \tau_mv_tv_s^{-1}$, so $\sigma(m) = m$ and $\sigma(m+1) = m+1$. It is easy to see that then $\sigma_1(m) = m$. Then we can rewrite $\sigma_1$ as a product $\sigma'$ of transpositions  $\{\tau_1, \ldots, \tau_{m-2}\}$ using only the relations  $\tau_i\tau_j = \tau_j\tau_i$ whenever $|i-j|>1$, and $\tau_i\tau_{i-1}\tau_i = \tau_{i-1}\tau_i\tau_{i-1}$, for $i,j\leq m$. These relations correspond to squares and hexagons, so this corresponds to a homotopy from the path starting at $v_s$ and representing the sequence of transpositions in the product $\sigma_1$ to some other path starting at $v_s$ and only using  transpositions in $\{\tau_1, \ldots, \tau_{m-2}\}$. Analogously, the path starting at $\sigma_1v_s$ and representing the sequence of transpositions in the product $\sigma_2$ is homotopic to a path that only uses transpositions in $\{\tau_{m+2}, \ldots, \tau_{m+n}\}$. Composing all these homotopies, we have a homotopy from the path $v_{s-1}, v_s, \ldots, v_{t_1}, v_t$ to a path whose sequence of edges starts and ends with $\tau_m$, and never uses transpositions adjacent to $\tau_m$. Since distant transpositions commute, we can move the terminal $\tau_m$ to the front, yielding a homotopy in $W_{m,n}$ to a path from $v_{s-1}$ to $v_t$ whose first two edges are $\tau_m$, and whose remaining edges do not include $\tau_m$. The relation $\tau_m^2 = 1$ yields a homotopy to a path from $v_{s-1}$ to $v_t$ that does not contain an edge labeled $\tau_m$. We have found a homotopy from the initial finite walk to finite walk with strictly fewer edges labeled $\tau_m$. Repeating this process finitely many times, we obtain a homotopy to a walk which starts and ends at $\id_{S_{m+n}}$ and has no edges labeled $\tau_m$. This new closed walk is then entirely in $W^1$, and is thus contractible. Thus, $W_{m,n}$ is simply connected. 
\end{proof}

\begin{prop}\label{prop:all-thin}
Given  any function $S_0: \plthin \to \{\pm 1\}$  satisfying  \ref{itm:s3}, there is a unique extension to a function $S:\plempty\to \{\pm 1\}$ which satisfies   \ref{itm:s3}. 
\end{prop}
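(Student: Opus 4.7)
The plan is to adapt the argument of \cite[Lemma 4.6]{most}, which extends a function on planar thin rectangles to one on all planar empty rectangles by forcing condition (S-3). First I would set $S(r) = S_0(r)$ whenever $r$ is planar thin, and then define $S$ inductively on the complexity of $r$, taken to be the minimum of its width and height. For a planar empty rectangle $r \colon \x \to \y$ of complexity $w \geq 2$, I plan to exhibit a planar thin rectangle $t$ together with two planar empty rectangles $r'$ and $t'$ of strictly smaller complexity, all with corner generators in $\mathcal G_{m,n}'$, such that the two pairs $(t, r')$ and $(r, t')$ form a square in the sense of (S-3). The relation
\[
S(r) := -S_0(t)\, S(r')\, S(t')
\]
is then forced, and uniqueness of the extension is immediate from this.

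The main obstacle will be well-definedness: a priori, different choices of the auxiliary thin rectangle $t$ could assign different values to $S(r)$. I plan to resolve this exactly as in \cite{most}: any two such choices can be connected by a sequence of squares and hexagons of planar thin rectangles, each of which multiplies consistently under $S_0$ because Proposition~\ref{prop:planar-thin} already establishes that $S_0$ satisfies the square and hexagon relations. Equivalently, in the Cayley-graph picture from the proof of Proposition~\ref{prop:planar-thin}, any two such decompositions of $r$ trace out homotopic loops in the simply-connected $2$-complex $W_{m,n}$, so their contributions telescope to the same value.

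Finally I would verify that the extended $S$ satisfies (S-3) on every square of planar empty rectangles, not merely those used in the inductive definition. This reduces, by chains of squares and hexagons, to the planar thin case already handled. The induction stays inside $\plempty$ because the four corner generators of any planar empty rectangle in $\fempty$ already lie in $\mathrm{BL}_m \cup \mathrm{TR}_n$, and the auxiliary thin rectangle $t$ adjacent to such a corner can be chosen so that its remaining corner is also in $\mathcal G_{m,n}'$; a short case analysis based on whether $r$ lies in $\mathrm{BL}_m$, in $\mathrm{TR}_n$, or crosses the square $D_m$ confirms this. Uniqueness then follows from the forcing relation, and existence from the consistency argument above.
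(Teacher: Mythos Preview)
Your approach differs from the paper's. The paper does not construct the extension directly; instead it obtains existence by restricting a sign assignment on all of $\mathrm{Rect}^{\circ}_{m+n}$ (which exists by \cite{most}) to $\plempty$, and proves uniqueness separately by the following trick: given two extensions $S_1,S_2$ of $S_0$, set $\mathcal B(r)=S_1(r)S_2(r)$ and show $\mathcal B\equiv 1$ by induction on the \emph{width} of $r$, using the square in Figure~\ref{fig:thick-sq}. Because $\mathcal B$ is defined from given data, no well-definedness check is needed, which is the main advantage of that route over a direct construction.

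Your direct inductive construction is a reasonable alternative, but as written it has a concrete gap. You take the complexity to be $\min(\text{width},\text{height})$ and set $S=S_0$ on the base case, but $S_0$ is only defined on $\plthin$, i.e.\ on width-one rectangles. Height-one rectangles of width $\geq 2$ have complexity $1$ yet are not in $\plthin$, so they are covered neither by your base case nor by your inductive step (which only treats complexity $\geq 2$). Such rectangles do occur in $\plempty$: for instance any rectangle with $\alpha$-edges $\alpha_m,\alpha_{m+1}$ and $\beta$-edges $\beta_i,\beta_j$ with $i\leq m<j$ (the ``bordered'' rectangles coming from unions of two bordered flows) has height $1$ and can have arbitrarily large width. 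The fix is to induct on width alone, as the paper does.

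A second point: your well-definedness argument appeals to the simple connectivity of $W_{m,n}$, but $W_{m,n}$ is built only from \emph{planar thin} rectangles, whereas two candidate decompositions of a width-$k$ rectangle $r$ differ by a square involving the non-thin pieces $r',t'$. Reducing that comparison to a loop in $W_{m,n}$ requires further use of the inductive hypothesis (that (S-3) already holds for all squares of strictly smaller width), not just Proposition~\ref{prop:planar-thin}. This can be made to work, but it is more than a direct invocation of $\pi_1(W_{m,n})=0$.
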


\begin{proof}
By \cite{most}, we know there exists a sign assignment on  $\mathrm{Rect}^{\circ}_{m+n}$, so by restricting, we know there exists a sign assignment on  $\plempty$. It remains to prove uniqueness. 

Suppose $S_1$ and $S_2$ are two sign assignments on $\plempty$. By Proposition~\ref{prop:planar-thin}, the restrictions $S_1\vert _{\plthin}$ and $S_2\vert_{\plthin}$ are related by a gauge transformation. Let  $f\colon \mathcal G_{m,n}' \to \{\pm 1\}$  be such a gauge transformation, i.e.\ a function such that for any rectangle $r\colon \x\to \y$ in $\plthin$ we have $S_2(r) = f(\x)f(\y) S_1(r)$. Define a new function 
\[\mathcal B \colon \plempty \to \{\pm 1\}\]
by 
\[\mathcal B(r)  = f(\x)f(\y)S_1(r)S_2(r),\]
where $\x$ and $\y$ are the starting and ending generators for $r$, respectively; c.f.\ \cite[Proof of Theorem 4.2]{most}. 
It is easy to deduce from \ref{itm:s1}--\ref{itm:s3} and Proposition~\ref{prop:planar-thin}  that $\mathcal B$ satisfies the following properties on its domain: 

\begin{enumerate}
\item[\mylabel{itm:bpt}{(B-1)}] $\mathcal B (r) = 1$ for any planar thin rectangle $r\in \plthin$.
\item[\mylabel{itm:bsq}{(B-2)}] $\mathcal B(r_1)\cdot \mathcal B(r_2) = \mathcal B(r_3)\cdot \mathcal B(r_4)$ whenever $(r_1, r_2)$ and $(r_3, r_4)$ form a square.
\end{enumerate}

We will show that $\mathcal B$ is one on all rectangles in $\plempty$.

We use induction on the width of the rectangle. 
 Suppose $r: \x\to \y$   is a planar empty rectangle.
 If $r$ is thin, we have already shown that $\mathcal B(r)=1$. Suppose $r$ has width $k>1$, and  assume that $\mathcal B$ is one on all planar rectangles of width $k-1$.

Say the two moving coordinates of $r$ are on $\beta_i$ and $\beta_{i+k}$, and consider the vertical strip in $G_{m+n}$ whose left and right edges are $\beta_i$ and $\beta_{i+k}$, respectively. Since $r$ is empty and each $\beta$-curve contains a point in $\x$, the complement of the domain of $r$ in this vertical strip contains $k-1>0$ points of $\x$. 

If any of these points are below the domain of $r$, pick the highest such point $p$. If $p$ is on $\beta_j$ for $j  > m$, then  
we get the following pairs of empty planar rectangles $(r, r_1)$ and $(r_2, r_3)$ that form a square, with all four rectangles in $\plempty$. Let $x_1$ and $y_1$ be the $\beta_i$-coordinates of $\x$ and $\y$, respectively. Similarly,  let $x_2$ and $y_2$ be the $\beta_{i+k}$-coordinates of $\x$ and $\y$. Then $r_1$ is the rectangle starting at $\y$ with moving coordinates $p$ and $y_2$, and $r_2$ and $r_3$ are uniquely defined by the property that $r_2$ starts at $\x$ and the juxtaposition $r_2\ast r_3$ has the same domain as $r\ast r_1$. See Figure~\ref{fig:thick-sq}.
\begin{figure}[h]
\centering
\labellist
	\pinlabel $x_1$ at -5 28
	\pinlabel $y_1$ at -5 82
	\pinlabel $y_2$ at 110 28
	\pinlabel $x_2$ at 110 82
	\pinlabel $p$ at 45 -4
	\endlabellist
 \includegraphics[scale=.9]{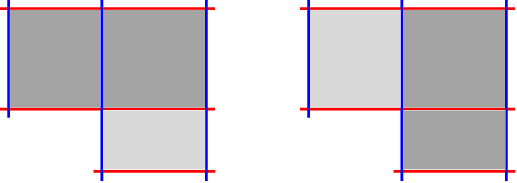}
 	\caption{Left: The juxtaposition $r\ast r_1$; the domain for $r$ is dark grey, and the domain for $r_1$ is light grey. Right: The juxtaposition $r_2\ast r_3$ whose domain is the same as for $r\ast r_1$.}\label{fig:thick-sq}
\end{figure}
 By assumption, since $r_1$, $r_2$, and $r_3$ are all empty and all have width less than $k$, we have that $\mathcal B(r_1) = \mathcal B(r_2) = \mathcal B(r_3)$, so by \ref{itm:bsq} we have $\mathcal B(r)=1$. The case when $p$ is on $\beta_j$ for $j  \leq m$ is analogous. 
 
If all points of $\x$ in the interior of  the vertical strip cobounded by $\beta_i$ and $\beta_{i+k}$ are above the domain of $r$, then pick the lowest such point $p$. The argument in this case is similar.

Since $\mathcal B (r) = 1$ for any planar empty rectangle  $r$, it follows that $S_1$ and $S_2$ are gauge equivalent. 
 \end{proof}

Next, we show that  a sign assignment  $S:\plempty\to \{\pm 1\}$ can be uniquely extended to all rectangles in $\fmnp$, including nonempty and/or nonplanar ones. We show this in two steps: first for nonempty planar ones, then for nonplanar ones. Recall that by \cite[Proposition 4.4]{hfz}, there is, up to gauge equivalence, a unique sign assignment on $\mathrm{Rect}_l$ for any $l$. 

The arguments used to prove \cite[Proposition 4.4]{hfz} work verbatim for the subset $\fmnp\subset \mathrm{Rect}_{m+n}$. We briefly explain this below.
 
 In \cite{hfz}, the \emph{complexity} $K(r)$ of a rectangle $r\colon \x\to \y$ was defined to be the number of points $p\in \x$ supported in the interior of the domain of $r$. Given a sign assignment $S$ on planar empty rectangles in $\fmnp$, we extend it inductively to a sign assignment on all rectangles in $\fmnp$, as follows. Suppose $r\in\fmnp$ is a planar rectangle with $K(r)>0$. Pick a point $p$ of $\x$ in the interior of $r$, and consider the (unique) juxtaposition of rectangles $r_1\ast r_2\ast r_3 = r$ such that $p$ is a moving coordinate of $r_1$ and $r_3$ and a non-moving coordinate on the right edge of $r_2$, as in Figure~\ref{fig:complexity}.
 \begin{figure}[h]
\centering
\labellist
	\pinlabel $x_1$ at -5 -4
	\pinlabel $y_1$ at -5 82
	\pinlabel $y_2$ at 110 -4
	\pinlabel $x_2$ at 110 82
	\pinlabel $p$ at 45 28
	\endlabellist
 \includegraphics[scale=.9]{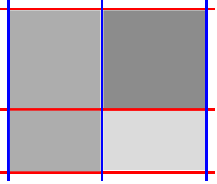}
 	\caption{A rectangle  $r\colon \x\to \y$ of complexity $K(r)>1$; the points $x_1$ and $x_2$ are the moving coordinates of $\x$, the points $y_1$ and $y_2$ are the moving coordinates of $\y$, and $p$ is in $\x\cap \y$. The rectangle $r$ can be decomposed into three rectangles $r_1$, $r_2$, and $r_3$ (in dark, medium, and light grey, respectively), each of lower complexity than $r$.}\label{fig:complexity}
\end{figure}
Define $S(r) = S(r_1)S(r_2)S(r_3)$. 

\begin{lem}\label{lem:complexity}
The sign $S(r)$ defined above is independent of the choice of decomposition of $r$. Furthermore, this function $S$ satisfies  \ref{itm:s3} on the set of planar rectangles in $\fmnp$.
\end{lem}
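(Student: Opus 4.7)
The plan is to follow the strategy of \cite[Proposition 4.4]{hfz} by strong induction on the complexity $K(r)$, while checking at each step that all sub-rectangles and intermediate generators produced remain in $\fmnp$. For the well-definedness claim, the base cases $K(r)=0$ (where $r$ is already empty and $S(r)$ is given) and $K(r)=1$ (where the choice of interior point $p$ is unique) are immediate. For the inductive step with $K(r)\geq 2$, fix two distinct points $p,p'\in\x$ in the interior of $r$ and consider the two three-term decompositions $r = r_1\ast r_2\ast r_3$ and $r = r_1'\ast r_2'\ast r_3'$ associated with $p$ and $p'$ respectively. I would interpolate between them by further decomposing each three-term product using the remaining interior point, yielding two common refinements that match term by term after finitely many applications of \ref{itm:s3} on squares of sub-rectangles. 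Since every sub-rectangle involved has complexity strictly less than $K(r)$, its sign is well-defined by induction, and \ref{itm:s3} is known for those by the secondary induction described below.

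For the claim that the extended $S$ satisfies \ref{itm:s3} on all planar rectangles in $\fmnp$, given two pairs $(\phi_1,\phi_2)$ and $(\phi_3,\phi_4)$ that form a square in $\fmnp$, I would induct on $\max_i K(\phi_i)$. The base case, where all four rectangles are empty, is exactly the hypothesis on $S$ on $\fempty$. For the inductive step, pick an interior point $p$ of $\x$ (the common starting generator of $\phi_1$ and $\phi_3$) contained in the support of the square, decompose each $\phi_i$ at $p$, and use juxtaposition and \ref{itm:s3} applied to squares of sub-rectangles of strictly smaller complexity to reduce $\prod_i S(\phi_i) = -1$ to an identity already known by induction. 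The two inductions (well-definedness and \ref{itm:s3}) are intertwined in complexity, but a standard lexicographic ordering makes the argument go through.

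The step I expect to require the most care, and which is the only thing that needs to be checked beyond the verbatim \cite{hfz} argument, is that every sub-rectangle and every intermediate generator appearing in the above decompositions lies in $\fmnp$. Since $r$ is planar and its starting and ending generators $\x,\y$ lie in $\mathcal G_{m,n}'$, the domain of $r$ is contained in $\mathrm{BL}_m \cup \mathrm{TR}_n$, together possibly with the corner square $D_m$. Intermediate generators appearing in a decomposition of $r$ differ from $\x$ only at coordinates inside the support of $r$, so they still lie in $\mathcal G_{m,n}'$, and the resulting sub-rectangles remain planar. Cases should be distinguished according to how the two moving coordinates of $r$ distribute between $\mathrm{BL}_m$ and $\mathrm{TR}_n$ (both in $\mathrm{BL}_m$, both in $\mathrm{TR}_n$, or one in each, where the last case forces the decomposition to interact with $D_m$), but the verification in each case is a direct combinatorial check that mirrors the analogous argument in the closed grid.
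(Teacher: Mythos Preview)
Your proposal is correct and takes essentially the same approach as the paper: both reduce to the argument of \cite[Proposition 4.4]{hfz} together with the observation that every sub-rectangle and intermediate generator produced by the inductive decomposition stays in $\fmnp$. One small simplification you would discover in carrying out the case analysis: in your third case (one moving coordinate in $\mathrm{BL}_m$ and one in $\mathrm{TR}_n$) the $\alpha$-edges are forced to be $\alpha_m$ and $\alpha_{m+1}$, so such a rectangle has no interior $\alpha$-curves and hence complexity zero, meaning no decomposition is ever needed there.
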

\begin{proof}
The proof follows from \cite{hfz} verbatim, after the observation that all arguments from \cite{hfz} applied to  planar rectangles in $\fmnp$ only use planar rectangles in $\fmnp$.
\end{proof}
   
Now suppose $r\colon \x \to \y$ is rectangle in $\fmnp$ that is not planar. If $r$ intersects the interior of the topmost horizontal annulus of the toroidal grid, but is disjoint from the interior of the rightmost vertical annulus. Then there is unique rectangle $r'\colon \y\to \x$ which is planar and together with $r$ forms a $\beta$-degeneration. The sign for $r'$ has already been defined, and we define $S(r)= -S(r')$. Similarly, if $r$  intersects the interior of the rightmost vertical  annulus, but not the interior of the topmost horizontal annulus, then there is unique rectangle $r'\colon \y\to \x$ which is planar and together with $r$ forms an $\alpha$-degeneration. Define $S(r)= S(r')$. Last, if  $r$ intersects the interiors of both the topmost horizontal annulus and the rightmost vertical annulus, then there is a unique other rectangle $r'\colon \x \to \y$ which is planar and related to $r$ by a sequence of a $\beta$-degeneration and an $\alpha$-degeneration. Define $S(r)= -S(r')$.

It is clear that the extension of the sign function $S$ above is well defined, since the planar rectangles used in the definition were unique.

\begin{lem}\label{lem:nonplanar}
The sign function $S \colon \fmnp\to \{\pm 1\}$ defined above  satisfies \ref{itm:s1}, \ref{itm:s2}, and  \ref{itm:s3}.
\end{lem}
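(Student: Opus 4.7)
The plan is to compare $S$ with the restriction to $\fmnp$ of a sign assignment $\tilde S$ on $\mathrm{Rect}_{m+n}$, whose existence is guaranteed by \cite{hfz}. Since $\tilde S|_{\fmnp}$ automatically satisfies \ref{itm:s1}, \ref{itm:s2}, and \ref{itm:s3}, showing $S = \tilde S$ on $\fmnp$ (possibly after a gauge transformation) immediately yields the lemma.

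To match the two functions on $\plempty$, I would invoke Propositions~\ref{prop:planar-thin} and \ref{prop:all-thin}: both $S|_{\plempty}$ and $\tilde S|_{\plempty}$ are sign functions satisfying \ref{itm:s3}, hence are gauge equivalent, and after replacing $S$ with its gauge transform we may assume $S|_{\plempty} = \tilde S|_{\plempty}$. Next, I would extend this equality to all planar rectangles in $\fmnp$ by induction on complexity $K(r)$. Our $S$ on a planar rectangle $r$ of positive complexity is defined by $S(r) = S(r_1)S(r_2)S(r_3)$ via the decomposition of Lemma~\ref{lem:complexity}, and the same identity is forced on $\tilde S$ by arguments from \cite{hfz} that derive the complexity formula from the square relations \ref{itm:s3} applied to squares built out of $r = r_1 \ast r_2 \ast r_3$; one only needs to verify that all such squares can be realized within $\fmnp$, which follows because the $r_i$ and the auxiliary rectangles all have starting and ending generators in $\mathcal G_{m,n}'$. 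Hence $S = \tilde S$ on all planar rectangles in $\fmnp$.

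Finally, for a nonplanar rectangle $r$ our construction sets $S(r) = \pm S(r')$, where $r'$ is the unique planar rectangle paired with $r$ by an $\alpha$-degeneration, a $\beta$-degeneration, or their composition. Since $\tilde S$ satisfies \ref{itm:s1} and \ref{itm:s2}, the same identities produce $\tilde S(r) = \pm \tilde S(r') = \pm S(r') = S(r)$. Combining all three steps, $S = \tilde S$ on $\fmnp$, and the desired axioms for $S$ follow from those for $\tilde S$.

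The main obstacle is the inductive step for planar rectangles, which requires carefully importing the complexity argument of \cite{hfz} into the subcomplex $\fmnp$; a secondary verification is that for rectangles intersecting both the rightmost vertical and topmost horizontal annuli, the two natural orders of applying an $\alpha$- and a $\beta$-degeneration yield the same planar $r'$ with consistent sign, which reduces to a direct combinatorial check involving the four corner rectangles that tile the region.
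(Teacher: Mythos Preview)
Your approach is correct and takes a genuinely different route from the paper's. The paper simply asserts that the arguments of \cite{hfz} go through verbatim once one checks that every square and degeneration appearing in those arguments, when applied to a rectangle in $\fmnp$, stays inside $\fmnp$; the whole proof is a closure observation. You instead import a full sign assignment $\tilde S$ on $\mathrm{Rect}_{m+n}$, gauge-match it with $S$ on $\plempty$ via Propositions~\ref{prop:planar-thin} and~\ref{prop:all-thin}, and then propagate the equality through the recursive definition of $S$, using that properties \ref{itm:s1}--\ref{itm:s3} are gauge-invariant so that proving them for the gauge-transformed function suffices. Two small remarks: your verification that the auxiliary squares lie in $\fmnp$ is unnecessary on the $\tilde S$ side, since $\tilde S$ lives on all of $\mathrm{Rect}_{m+n}$ and the identity $\tilde S(r) = \tilde S(r_1)\tilde S(r_2)\tilde S(r_3)$ follows from \ref{itm:s3} there without restriction; and your ``secondary verification'' about the two orders of degeneration is a point the paper does not address explicitly. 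Your route is more self-contained and essentially yields Proposition~\ref{prop:fmnp} simultaneously; the paper's is shorter but defers the work to a careful re-reading of \cite{hfz}.
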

\begin{proof}
Again the proof follows from \cite{hfz} verbatim, after the observation that all arguments from \cite{hfz} applied to rectangles in $\fmnp$ only use rectangles in $\fmnp$.
\end{proof}

Thus, we have shown that the function $S$ defined above is a sign assignment. Further, it is unique up to gauge equivalence. 

\begin{prop}\label{prop:fmnp}
There is a unique (up to gauge equivalence) sign assignment $S \colon \fmnp\to \{\pm 1\}$.
\end{prop}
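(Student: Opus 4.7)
The plan is to assemble the preceding propositions and lemmas. For existence, the chain of results just proved---Proposition~\ref{prop:planar-thin} gives a function on $\plthin$, Proposition~\ref{prop:all-thin} extends it uniquely to $\plempty$, Lemma~\ref{lem:complexity} extends further to all planar rectangles in $\fmnp$, and Lemma~\ref{lem:nonplanar} handles the non-planar ones while verifying all three sign assignment axioms---produces a sign assignment $S \colon \fmnp \to \{\pm 1\}$.

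For uniqueness, I would suppose $S_1, S_2 \colon \fmnp \to \{\pm 1\}$ are two sign assignments and restrict both to $\plempty$. By Proposition~\ref{prop:all-thin}, there is a gauge transformation $u \colon \mathcal G_{m,n}' \to \{\pm 1\}$ realizing their equivalence on $\plempty$. After replacing $S_2$ by its gauge transform under $u$, I may assume $S_1\vert_{\plempty} = S_2\vert_{\plempty}$, and it suffices to show $S_1 = S_2$ on all of $\fmnp$.

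I would then handle non-empty planar rectangles by induction on the complexity $K(r)$. The base case $K(r) = 0$ is already in hand. For the inductive step, given a planar rectangle $r \in \fmnp$ with $K(r) > 0$, the decomposition $r = r_1 \ast r_2 \ast r_3$ from Lemma~\ref{lem:complexity} lies entirely in $\fmnp$ and each $r_i$ has strictly smaller complexity; the same square and degeneration arguments used in \cite{hfz} force any sign assignment to satisfy $S(r) = S(r_1) S(r_2) S(r_3)$, so the inductive hypothesis yields $S_1(r) = S_2(r)$. For a non-planar rectangle $r$, the unique planar companion $r' \in \fmnp$ obtained by one or two boundary degenerations (as described just before Lemma~\ref{lem:nonplanar}) determines $S(r)$ from $S(r')$ via \ref{itm:s1} and \ref{itm:s2}, so again $S_1(r) = S_2(r)$.

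The main concern throughout is that the auxiliary rectangles produced in these arguments---the pieces $r_1, r_2, r_3$ and the planar companions $r'$---must stay inside $\fmnp$. This is precisely the content of the ``verbatim'' observations in Lemmas~\ref{lem:complexity} and \ref{lem:nonplanar}, and is the reason why restricting to the subcollection $\fmnp$ does not interfere with the classical argument of \cite{hfz}.
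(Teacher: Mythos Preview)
Your proposal is correct and follows essentially the same approach as the paper. The paper's proof compresses the uniqueness argument into the single sentence ``This extension was uniquely determined by the axioms of sign assignment,'' whereas you spell out explicitly the gauge-then-induct-on-complexity-then-degenerate scheme; both rely on Propositions~\ref{prop:planar-thin} and~\ref{prop:all-thin} for uniqueness on $\plempty$ and on the observation (recorded in Lemmas~\ref{lem:complexity} and~\ref{lem:nonplanar}) that the auxiliary rectangles used in the \cite{hfz} extension stay inside $\fmnp$.
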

\begin{proof}
Propositions~\ref{prop:planar-thin} and \ref{prop:all-thin} show that there is a unique (up to gauge equivalence) sign assignment on the flows in $\plempty$. The extension we define to $\fmnp$ is a sign assignment by  Lemmas~\ref{lem:complexity} and \ref{lem:nonplanar}. This extension was uniquely determined by the axioms of sign assignment, which completes the proof.
\end{proof}

\subsection{Restricting to bigons in $\fmn$}\label{ssec:pair-bigons}

 Observe that $\gmn$ is the subset of $\mathcal G_{m+n}$ consisting of generators with any sign profile and with permutation $\sigma$ such that $\sigma(i)\leq m$ when $i\leq m+1$ and $\sigma(i)>m$ when $i\geq m$. By \cite[Proposition 4.3]{hfz}, we know that for a fixed permutation, there is, up to gauge equivalence, a unique sign assignment over the set of generators with that permutation. Thus, there is, up to gauge equivalence, a unique sign assignment on the set of bigons over $\gmn$, i.e.\ on the set of bigons in $\fmn$.

\subsection{Extending to all flows in  $\fmn$}\label{ssec:pair-epsilon-any}

Note that $\alpha$- and $\beta$-degenerations consist of either two bigons or two rectangles. Further, a square of flows that contains a bigon has at least one generator with non-identity sign profile, so, restricting to $\fmnp$ and bigons, a square consists either only of bigons, or only of rectangles in $\fmnp$. Thus, a sign assignment on $\fmnp$ and a sign assignment on bigons collectively form a sign assignment on the union of the two sets. In other words, we have shown the existence and uniqueness of a sign assignment $S$ for flows in $\fmn$ that connect a pair of generators with  sign profile ${\bf 1}$, or a pair of generators with the same permutation.

  It remains to show such a  sign assignment $S$ can be extended to rectangles in $\fmn$ whose starting and ending generators do not both have identity sign profile.

  By \cite[Lemma 4.10]{hfz}, a sign assignment defined over all bigons and a fixed rectangle $r$ connecting two generators with sign profile ${\bf 1}$ can be uniquely extended to the set $\theta(r)$ consisting of all rectangles related to $r$ by a simple flip. 
  
  \begin{lem}\label{lem:theta}
   If $r$ is a rectangle in $\fmn$, and $r'$ is obtained from $r$ by a sequence of simple flips, then $r'$ is also in $\fmn$.
  \end{lem}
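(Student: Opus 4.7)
The plan is to reduce the statement to the single observation that $\fmn$-membership of a rectangle depends only on the permutations of its starting and ending generators, and not on their sign profiles, whereas a simple flip modifies exactly a sign profile entry and leaves permutations intact. Since composing flips one at a time reduces the general case to a single flip, it suffices to prove the single-flip version.

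First, I would recall from Section~\ref{ssec:pair-bigons} the explicit description of $\gmn \subset \mathcal G_{m+n}$: it consists of all formal generators whose permutation $\sigma$ satisfies $\sigma(i)\leq m$ for $i\leq m+1$ and $\sigma(i)> m$ for $i\geq m$, with \emph{no restriction on the sign profile}. The set $\fmn$, viewed as a subset of $\mathcal F_{m+n}$, is then exactly the set of flows whose starting and ending generators both lie in $\gmn$, so testing whether $r'\in \fmn$ is the same as testing whether its two endpoint generators satisfy the permutation condition above.

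Next, by Definition~\ref{def:sflip}, if $r:\x\to \y$ and $r':\x'\to \y'$ differ by a simple flip at a non-moving coordinate $i$, then $r$ and $r'$ share the same domain (with the same moving coordinates), and $\x', \y'$ are obtained from $\x, \y$ respectively by toggling the sign at the $i$-th coordinate while leaving the associated permutations unchanged: $\sigma_{\x'}=\sigma_{\x}$ and $\sigma_{\y'}=\sigma_{\y}$. Since $\x,\y\in\gmn$, their permutations satisfy the conditions characterizing $\gmn$, and these same conditions therefore hold for $\sigma_{\x'}$ and $\sigma_{\y'}$. Hence $\x',\y'\in\gmn$, and therefore $r'\in\fmn$.

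Finally, an easy induction on the length of a sequence of simple flips completes the proof: each individual flip preserves the $\fmn$-membership of the rectangle, so the composition of finitely many of them does as well. There is no real obstacle here; the only thing to double-check is that the characterization of $\gmn$ in Section~\ref{ssec:pair-bigons} truly imposes no restriction on sign profiles, which the text makes explicit.
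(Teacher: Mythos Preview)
Your argument has a genuine gap: the claim that $\fmn$, viewed inside $\mathcal{F}_{m+n}$, is \emph{exactly} the set of flows whose starting and ending generators lie in $\gmn$ is false. By Definition~\ref{def:union-flow}, $\fmn$ consists of unions of compatible pairs of formal flows, and for a rectangle whose moving coordinates are $m$ and $m+1$ (i.e., one arising from a pair of bordered flows), membership in $\fmn$ additionally requires that the orientations of the two $\alpha$-edges correspond to some valid Reeb type $\rho_i$. This is stated explicitly in Lemma~\ref{lem:omega}: for such a rectangle $r$, only three of the four possible $\alpha$-edge orientation pairs yield rectangles in $\omega(r)\cap\fmn$; the fourth yields a rectangle in $\omega(r)\setminus\fmn$, even though its endpoints (which differ from those of $r$ only in sign profile) still lie in $\gmn$. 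So having endpoints in $\gmn$ is necessary but not sufficient.

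What the paper's proof does, and what your argument is missing, is to also observe that a simple flip leaves the labels and orientations on the edges of the domain unchanged. Hence if $r$ decomposes as the union of an internal flow with an identity flow, or as the union of two bordered flows of a fixed type $\rho_i$, then $r'$ decomposes in the same way, with the same domain data. Combined with the permutation observation you made (which correctly shows the endpoints stay in $\gmn$), this is what guarantees $r'\in\fmn$. Your argument handles the permutation part but omits the domain part, which is essential precisely in the bordered case.
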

  
\begin{proof}
First observe that simple flips do not change the permutation of the two generators, so the starting and ending generator of $r'$ are both in $\gmn$. Further, simple flips do not change the labels and orientations of the domain. This implies that if $r$ is the union of an internal flow and an identity flow, then so is $r'$, and if  $r$ is the union of two bordered flows of a given type $\rho_i$, then $r'$ is the union of two bordered flows of the same type. 
\end{proof}
 
Further,    by \cite[Lemma 4.14]{hfz}, a sign assignment $S$ defined over all bigons and a fixed rectangle $r$ can be uniquely extended to a function on the set $\omega(r)$ consisting of all rectangles obtained from $r$ by changing orientations of (any number of) the edges of $r$  so that this extension satisfies \ref{itm:s3}. Moreover, the discussion in the proof of  \cite[Lemma 4.13]{hfz} shows that if $r'$ is obtained from $r$ by a single change of orientation of an edge, then the value $S(r)$ determines $S(r')$.

  \begin{lem}\label{lem:omega}
   If $r$ is a rectangle in $\fmn$, and $r'$ is a rectangle in  $\omega(r)\cap \fmn$, then $r'$ can be obtained from $r$ by a sequence of changes of orientations on  edges of $r$  so that the rectangle resulting from each change is still in  $\omega(r)\cap \fmn$. 
  \end{lem}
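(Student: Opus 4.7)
The plan is to split according to whether $r$'s domain is contained in a single subgrid or spans both. If $r$ lies entirely in $\mathrm{BL}_m$ or in $\mathrm{TR}_n$, then every rectangle in $\omega(r)$ has the same property, and each such rectangle is the union of an internal flow with an identity flow, hence lies in $\fmn$. In this case $\omega(r)\cap \fmn=\omega(r)$, so any sequence of single-edge orientation flips from $r$ to $r'$ stays in $\omega(r)\cap \fmn$. So the interesting case is when $r$ spans both subgrids.

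Assume $r$ spans both subgrids. Then $r$ decomposes as the union of two bordered flows of some type $\rho_i$, and its two $\alpha$-edges lie on $\alpha_m$ and $\alpha_{m+1}$. The first observation I would establish is that, among rectangles in $\omega(r)$, membership in $\fmn$ depends only on the orientations of the two $\alpha$-edges: these determine the $\pm$ decorations at the two gluing points inherited from the unlabeled edges of the original bordered flows, whereas the $\beta$-edge orientations play no role in the bordered-flow type. Consequently each $\beta$-flip individually preserves membership in $\fmn$, and I would first perform all desired $\beta$-flips one edge at a time, reaching an intermediate rectangle whose $\beta$-orientations match $r'$ but whose $\alpha$-orientations still match $r$.

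It then remains to handle $\alpha$-flips. Using the graphical description of $\alg_P$ from Section~\ref{sec:alg}, I would partition the six generators $\rho_1,\ldots,\rho_{123}$ by the ordered pair $(\alpha^a_{\mathrm{bot}}, \alpha^a_{\mathrm{top}})$ of bordered-arc labels at the bottom and top of the glued rectangle; from the endpoints of each $\rho$-arc and the matching $M$, this partition is $\{\rho_2\}$, $\{\rho_{12}\}$, $\{\rho_{23}\}$, and the triple $\{\rho_1, \rho_3, \rho_{123}\}$. In each of the three singleton classes the valid $\alpha$-orientation pattern is unique, so $r$ and $r'$ must already have the same $\alpha$-orientations and no further flips are needed. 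In the triple class, a direct check for each of the four sign sequences $P\in\{+,-\}^2$ shows that $\rho_1, \rho_3, \rho_{123}$ occupy exactly three of the four points of the set $\{+,-\}^2$ of possible decorations at the two gluing points, and that the two types at Hamming distance $2$ always have the third as a common Hamming-distance-$1$ neighbor. Hence if $r$ and $r'$ are at Hamming distance $2$ in their $\alpha$-decorations, this third type furnishes an intermediate rectangle in $\omega(r)\cap \fmn$ reachable from each by a single $\alpha$-flip; otherwise a single $\alpha$-flip already suffices.

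The hard part will be the finite case analysis just outlined: for each of the four sign sequences $P$ one must compute the $\pm$ decorations at the endpoints of $\rho_1, \rho_3, \rho_{123}$ from $\ZZ_P$ and verify that the three resulting patterns form a connected subgraph of $\{+,-\}^2$ under single-coordinate flips. Conceptually the rest is routine bookkeeping on top of the separation of $\beta$-flips from $\alpha$-flips.
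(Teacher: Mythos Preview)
Your proposal is correct and follows essentially the same route as the paper: separate the internal-plus-identity case (where $\omega(r)\subset\fmn$) from the bordered-plus-bordered case, observe that $\beta$-flips never affect membership in $\fmn$, and then analyze the $\alpha$-orientation patterns. One small redundancy: the classes $\{\rho_{12}\}$ and $\{\rho_{23}\}$ are actually empty here, since a union of two $\rho_{12}$ (or $\rho_{23}$) flows would yield a ``rectangle'' with both $\alpha$-edges carrying the same label, which is not a valid formal rectangle; the paper accordingly only treats $\rho_2$ and the triple $\{\rho_1,\rho_3,\rho_{123}\}$. Also, the paper fixes $P=(++)$ throughout this subsection and handles the remaining sign sequences by a bijection at the end, whereas you propose checking all four directly; either works. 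For the triple class, the paper makes the connectivity explicit by noting that $r_{123}$ is obtained from $r_1$ (resp.\ $r_3$) by a single $\alpha$-flip on $\alpha_{m+1}$ (resp.\ $\alpha_m$), which is exactly your Hamming-distance observation.
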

  
\begin{proof}
First observe that changing orientations of edges does not change the the permutation of the two generators, so the starting and ending generator of any rectangle in $\omega(r)$ are both in $\gmn$. Next, it is clear that if $r$ is the union of an internal flow and an identity flow, then each rectangle in $\omega(r)$ is in $\fmn$.  Last, consider the case when $r$ is the union of two bordered flows.

 If $r$ is the union of two bordered flows of type $\rho_2$, then  $\omega(r)\cap \fmn$ consists of four rectangles (each of which is also the union of two bordered flows of type $\rho_2$), obtained from $r$ by taking the four possible pairs of orientations of the $\beta$-edges; it is easy to see that changing either or both of the orientations of the $\alpha$-edges results in a rectangle in $\mathcal F_{m+n}$ that is not the union of a left flow and a right flow.

 If $r$ is the union of two bordered flows of type $\rho_1$, $\rho_3$, or $\rho_{123}$, then  $\omega(r)\cap \fmn$ consists of twelve rectangles -- for each possible  orientation of the $\beta$-edges, three of the four possible orientations of the $\alpha$-edges result in  rectangles $r_1$, $r_3$, and $r_{123}$  in $\omega(r)\cap \fmn$ such that each $r_i$ is  the union of two bordered flows of type  $\rho_i$, whereas the fourth results in a rectangle in $\omega(r)\setminus \fmn$ (i.e.\ one that is not the union of two bordered flows). The rectangle $r_{123}$ can be obtained from   $r_1$ or from $r_3$ by changing the orientation of $\alpha_{m+1}$ or $\alpha_m$, respectively, and thus from any other rectangle in $\omega(r)\cap \fmn$ by also changing orientations on the $\beta$-edges.
\end{proof}

 By the above observations,  we have that \cite[Lemma 4.15]{hfz} has the following analogue for our restricted sets of flows.

  \begin{lem}\label{lem:theta-omega}
  Let $S$ be a sign assignment defined over all bigons in $\fmn$ and over some fixed  rectangle $r$ in $\fmn$. Then $S$ can be uniquely extended to a function over the set   $\cup \{\omega(r_1)\cap \fmn \mid r_1\in \theta(r)\}$ so that the extension satisfies \ref{itm:s3}.
  \end{lem}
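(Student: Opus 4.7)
The plan is to mirror the proof of \cite[Lemma 4.15]{hfz}, carefully restricting every construction to flows in $\fmn$ and invoking Lemmas~\ref{lem:theta} and~\ref{lem:omega} to ensure that the arguments survive the restriction.

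First I would extend $S$ from $\{r\}$ (together with the bigons) to all of $\theta(r)$. By Lemma~\ref{lem:theta}, every rectangle $r_1 \in \theta(r)$ is itself in $\fmn$, and any sequence of simple flips relating $r$ to $r_1$ stays inside $\fmn$; moreover, the pair of bigons appearing in a simple-flip square (as in Definition~\ref{def:sflip}) lives in $\fmn$ because bigons in $\fmn$ are automatically paired with identity flows. Thus the usual argument of \cite[Lemma 4.10]{hfz}, which uses \ref{itm:s3} to propagate the sign across simple-flip squares, determines $S(r_1)$ for every $r_1 \in \theta(r)$ and is well-defined because any two flip sequences between $r$ and $r_1$ differ by squares of bigons (already signed) and simple-flip squares (forced).

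Next, for each $r_1 \in \theta(r)$, I would extend $S$ to $\omega(r_1) \cap \fmn$ by single edge-orientation changes. As recalled before Lemma~\ref{lem:omega}, a single change of orientation on one edge of $r_1$ forces the sign of the resulting rectangle via \ref{itm:s3} applied to a degeneration square built from bigons (already signed) and the rectangle being updated. Lemma~\ref{lem:omega} is the essential input here: it guarantees that any $r' \in \omega(r_1) \cap \fmn$ can be reached from $r_1$ by a chain of single-edge flips each of which remains in $\omega(r_1) \cap \fmn$, so we never need to pass through a rectangle outside $\fmn$ during the propagation. Independence from the chosen chain follows from the argument of \cite[Lemma 4.14]{hfz}, applied verbatim: the relations between any two such chains are squares and hexagons whose components are all either bigons or rectangles in $\fmn$, and on these \ref{itm:s3} has already been established.

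Finally, I would verify consistency on overlaps: if $r' \in (\omega(r_1)\cap\fmn) \cap (\omega(r_2)\cap\fmn)$ for two distinct $r_1, r_2 \in \theta(r)$, the two a priori different values of $S(r')$ must agree. To see this, concatenate a simple-flip chain from $r_1$ to $r_2$ inside $\theta(r)$ with the single-edge-flip chain that takes $r_2$ to $r'$; by Lemma~\ref{lem:theta} and Lemma~\ref{lem:omega} the concatenation lies entirely in $\fmn$, and each elementary move was shown to give the same signed value in the two constructions, so the two extensions coincide at $r'$. Uniqueness is built into the construction: once $S$ is fixed on bigons and on $r$, the requirement \ref{itm:s3} forces the value on each simple flip and on each single edge change, so the extension is determined at every stage.

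The step I expect to be the main obstacle is the extension along $\omega$ within $\fmn$, because $\omega(r_1)$ in general contains rectangles outside $\fmn$ (as flagged in the discussion preceding Lemma~\ref{lem:omega}), and the naive \cite{hfz} argument moves through such rectangles. Lemma~\ref{lem:omega} was precisely designed to bypass this by certifying that single-edge-flip paths within $\omega(r_1)\cap\fmn$ exist and suffice for connectivity; verifying that no relation used for well-definedness secretly requires an out-of-$\fmn$ intermediate (in particular that the hexagons in the \cite[Lemma 4.14]{hfz} argument can be realized by rectangles all lying in $\fmn$) is the delicate point where one must inspect the local geometry case by case as in the proof of Lemma~\ref{lem:omega}.
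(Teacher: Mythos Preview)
Your proposal is correct and follows essentially the same approach as the paper's own proof, which is simply a one-sentence invocation of Lemmas~\ref{lem:theta} and~\ref{lem:omega} together with \cite[Lemmas 4.10 and 4.14]{hfz}. Your write-up is a faithful unpacking of that invocation; the only minor point is that the well-definedness argument in \cite[Lemma 4.14]{hfz} uses squares with bigons rather than hexagons, so your caution about hexagons is unnecessary here.
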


\begin{proof}
This is immediate from  Lemmas~\ref{lem:theta} and \ref{lem:omega}, combined with the arguments in \cite[Lemmas 4.10 and 4.14]{hfz}.
\end{proof}

Now we are ready to show there is a unique sign assignment on $\fmn$. Start with a sign assignment $S$ on all rectangles in $\fmnp$ and all bigons in $\fmn$. Use Lemma~\ref{lem:theta-omega} repeatedly for every rectangle in $\fmnp$, to extend $S$ to a sign function on $\fmn$. Note again that this is the unique extension that satisfies \ref{itm:s3} on each set  $\{\cup \{\omega(r_1)\cap \fmn \mid r_1\in \theta(r)\}\mid r\in \fmnp\}$.  By \cite[Lemmas 4.16 and 4.17]{hfz}, this extension is indeed a sign assignment. 

The argument when $P\neq (++)$ is analogous. We simply change the orientation of $\alpha_m$ and/or $\alpha_{m+1}$ on $G_{m+n}$ so that 
\[(\alpha_m\cap \beta_m, \alpha_{m+1}\cap \beta_m) = P\] and observe that the operation of changing sign profile in a fixed set of entries gives a bijection on generators which results in a bijection on bigons and on rectangles. Thus, all the arguments in this section apply, after composing with this bijection.

\subsection{Compatible bordered sign assignments pair to closed sign assignments}\label{ssec:pair-all}

We are finally ready to prove that bordered sign assignments on formal flows pair nicely. 

\begin{proof}[Proof of Theorem~\ref{thm:ext}]
Given a sign assignment $S$ on $\mathcal{F}_{m+n}$, its restriction to $\fmnpp$ is a sign assignment. By the above, it must agree with $\overline{\STA}\otimes \overline{\STD}$. By uniqueness of closed sign assignments, we've shown that $\overline{\STA}\otimes \overline{\STD}$ extends to a sign assignment of power $m+n$.
\end{proof}

We are finally ready to prove Theorem~\ref{thm:pair}.

\begin{proof}[Proof of Theorem~\ref{thm:pair}]
For each of  $\HD_1 =(\Sigma_1,\alphas_1,\betas_1,z)$ and $\HD_2=(\Sigma_2,\alphas_2,\betas_2,z)$, fix an ordering and orientation on  the $\alpha$-circles, $\beta$-circles, and $\alpha$-arcs, so that $\bdy\HD_1 = -\bdy\HD_2$ can be identified with the same signed pointed matched circle  $\ZZ_P$. Let $\STA$ be a bordered sign assignment of type $A$, compatible with $P$, and of power $m = |\betas_1|$. Let $\STD$ be a bordered sign assignment of type $D$ and of power $n = |\betas_2|$ compatible with $\STA$. 

Omitting the above choices from the notation, let  $\cfahat(\HD_1; \Z)$ and $\cfdhat(\HD_2; \Z)$ be the resulting type $A$ and type $D$ structure, respectively. Writing $F(\x)$ for the formal generator corresponding to a Heegaard diagram generator $\x$, we see that the differential on $\cfahat(\HD_1; \Z)\boxtimes \cfdhat(\HD_2; \Z)$ is then
\begin{align*}
\bdy^{\boxtimes}(\x_1\otimes \x_2) =& \sum_{ \y_1 \in \mathbb{T}_{\alpha_1} \cap \mathbb{T}_{\beta_1}} \ \sum_{\phi \in \mathrm{Flows}_{\mathrm{int}}(\x_1,\y_1)} \ \STA(F(\phi))  \y_1 \otimes \x_2\\
& + (-1)^{|\x_1|}\sum_{\substack{ \y_1 \in \mathbb{T}_{\alpha_1} \cap \mathbb{T}_{\beta_1} \\ \y_2 \in \mathbb{T}_{\alpha_2} \cap \mathbb{T}_{\beta_2}}} \ \sum_{a \in \alg_P}  \ \sum_{\substack{\phi_1 \in \mathrm{Flows}_{a}(\x_1,\y_1) \\ \phi_2 \in \mathrm{Flows}_{a}(\x_2,\y_2)}} \ \STA(F(\phi_1))   \y_1\otimes \STD(F(\phi_2)) \y_2 \\
& + (-1)^{|\x_1|} \sum_{ \y_2 \in \mathbb{T}_{\alpha_2} \cap \mathbb{T}_{\beta_2}} \ \sum_{\phi \in \mathrm{Flows}_{\mathrm{int}}(\x_2,\y_2)} \ \STD(F(\phi))  \x_1 \otimes \y_2 \\
=& \sum_{ \y_1 \in \mathbb{T}_{\alpha_1} \cap \mathbb{T}_{\beta_1}} \ \sum_{\phi \in \mathrm{Flows}_{\mathrm{int}}(\x_1,\y_1)} \ (\overline{\STA}\otimes \overline{\STD})(F(\phi)\cup\phi_{F(\x_2)})  \y_1 \otimes \x_2\\
& + \sum_{\substack{ \y_1 \in \mathbb{T}_{\alpha_1} \cap \mathbb{T}_{\beta_1} \\ \y_2 \in \mathbb{T}_{\alpha_2} \cap \mathbb{T}_{\beta_2}}} \ \sum_{a \in \alg_P}  \ \sum_{\substack{\phi_1 \in \mathrm{Flows}_{a}(\x_1,\y_1) \\ \phi_2 \in \mathrm{Flows}_{a}(\x_2,\y_2)}} \ (\overline{\STA}\otimes \overline{\STD})(F(\phi_1)\cup F(\phi_2))   \y_1\otimes  \y_2 \\
& +  \sum_{ \y_2 \in \mathbb{T}_{\alpha_2} \cap \mathbb{T}_{\beta_2}} \ \sum_{\phi \in \mathrm{Flows}_{\mathrm{int}}(\x_2,\y_2)} \ (\overline{\STA}\otimes \overline{\STD})(\phi_{F(\x_1)}\cup F(\phi))  \x_1 \otimes \y_2. 
\end{align*}

Consider the ordering and orientation on the curves on $\HD$ induced by that on the two bordered diagrams, as follows. Curves on $\HD_1$ come before curves on $\HD_2$. The circles obtained by gluing $\alpha$-arcs come after the $\alpha$-circles on $\HD_1$ and before the $\alpha$-circles on $\HD_2$, with the circle corresponding to $\iota_1$ coming first. 
The generators of the diagram $\HD$ naturally define formal generators of power $m+n$, and the empty bigons and rectangles on $\HD$ specify formal flows of power $m+n$. 
 With this identification and with Theorem~\ref{thm:ext} in mind, we now see that 
$\bdy^{\boxtimes}$ is precisely the differential on $\widetilde{\mathrm{CF}}(\HD; \Z)$ with respect to the sign assignment whose restriction to $\fmnpp$ is given by  $\overline{\STA}\otimes \overline{\STD}$.
\end{proof}


\section{A surgery exact triangle discussion}
\label{sec:tri}

Recall the following surgery exact triangle in Heegaard Floer homology \cite{osz14}. Given a framed knot $K$ in a 3-manifold $Y$, consider the 3-manifolds $Y_{-1}$, $Y_0$, and $Y_{\infty}$ obtained by $-1$, $0$, and $\infty$ surgery on $K$. There is an exact triangle 
\[\cdots \to \hfhat(Y_{\infty}; \Z)\to \hfhat(Y_{-1}; \Z)\to \hfhat(Y_0; \Z)\to \cdots\]
Since it is unknown whether the combinatorially defined Heegaard Floer homology $\hft$ over $\Z$ agrees with the analytically defined theory, the surgery exact triangle from \cite{osz14} does not automatically imply a surgery exact triangle for the integral version of 
$\hft$.

In order to discuss a bordered approach to proving Conjecture~\ref{conj:tri}, we need to lay out a couple more algebraic definitions, following \cite[Section 12]{osz-bord2}.

\begin{defn}
Let $\mathcal M$ and $\mathcal N$ be type $D$ structures over $\alg$ with structure maps $\delta^1_{\mathcal M}$ and $\delta^1_{\mathcal N}$, respectively. A \emph{morphism} from $\mathcal M$ to $\mathcal N$ is a map
\[f\colon M \to A \otimes N.\]
The \emph{differential} of a morphism $f$ is the morphism 
\[df\coloneqq (\mu_1 \otimes \id_N) \circ f + (\mu_2 \otimes \id_N) \circ (\id_A \otimes \delta^1_{\mathcal N}) \circ  f - (\mu_2 \otimes \id_N) \circ (\id_A \otimes  f) \circ  \delta^1_{\mathcal M}.\]
A \emph{homomorphism} is a map $f\colon M\to A\otimes N$ with $df = 0$.

The \emph{identity morphism} $\id_{M}\colon N\to A\otimes N$ is the map defined by $\id_M(x) = I_A\otimes x$, where $I_A$ is the unit of the DGA $\alg$.

Given morphisms $f$ from $M$ to $N$ and $g$ from $N$ to $P$, their \emph{composition}  is the morphism
\[g\circ f \coloneqq (\mu_2 \otimes \id_P) \circ (\id_A \otimes g)\circ f. \]
\end{defn}

Given a morphism $f$ from $\mathcal M$ to $\mathcal N$, we write $\delta^1_{\mathcal N}\circ f$ for $(\mu_2 \otimes \id_N) \circ (\id_A \otimes \delta^1_{\mathcal N}) \circ  f$ and $f\circ \delta^1_{\mathcal M}$ for $(\mu_2 \otimes \id_N) \circ (\id_A \otimes  f) \circ  \delta^1_{\mathcal M}$. For an algebra with no differential then, $f$ is a homomorphism exactly when $\delta^1_{\mathcal N}\circ f = f\circ \delta^1_{\mathcal M}$.

\begin{defn}
Two homomorphisms 
\[f, g\colon  M \to A \otimes N.\]
are \emph{homotopic}, denoted $f\sim g$, if there exists a morphism $h\colon M\to N[1]$ such that 
\[dh = f-g.\]

Two type $D$ structures $M$ and $N$ over $\alg$ are \emph{homotopy equivalent} if there exist homomorphisms $f\colon M\to A\otimes N$ and $g\colon N\to A\otimes M$ such that $g\circ f\sim \id_M$ and $f\circ g\sim \id_N$. 
\end{defn}

The authors of \cite[Section 11.2]{bfh2} used bordered Floer homology to build a surgery exact triangle over $\F_2$. To do so, they construct bordered diagrams for three tori filled at slopes $\infty$,  $-1$, and $0$ respectively. They then build a short exact sequence between the type $D$ structures arising form these diagrams using homomorphisms $\overline{\phi}$ and $\overline{\psi}$.  By \cite[Proposition 2.36]{bfh2}, after tensoring with a type $A$ structure this short exact sequence induces a long exact sequence on homology.

\begin{figure}[h]
\centering
\labellist
	\pinlabel \textcolor{red}{$\alpha_1^a$} at 45 90
	\pinlabel \textcolor{red}{$\alpha_2^a$} at 90 40
	\pinlabel \textcolor{blue}{$\beta_1$} at 60 40
	\pinlabel $0$ at 78 3
	\pinlabel $1$ at 78 81
	\pinlabel $2$ at 3 81
	\pinlabel $3$ at 3 3
	\pinlabel $\HD_{\infty}:$ at -30 42
	\pinlabel $\HD_{-1}:$ at 125 42
	\pinlabel $\HD_{0}:$ at 280 42
	\pinlabel $r$ at 36 -5
	\pinlabel $s$ at -7 35
	\pinlabel $t$ at -7 50
	\pinlabel $a$ at 148 42
	\pinlabel $b$ at 197 -5
	\pinlabel $n$ at 300 48
	\pinlabel $p$ at 340 -5
	\pinlabel $q$ at 360 -5
	\pinlabel $\phi_1$ at 72 44
	\pinlabel $\phi_2$ at 14 70
	\pinlabel $\phi_3$ at 14 15
	\pinlabel $\phi_5$ at 220 70
	\pinlabel $\phi_4$ at 167 15
	\pinlabel $\phi_6$ at 320 70
	\pinlabel $\phi_7$ at 375 70
	\pinlabel $\phi_8$ at 350 13
\endlabellist
\hspace{1cm}
 \includegraphics[scale=1]{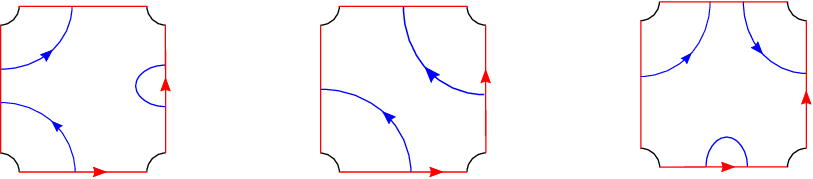}
 \vspace{.2 cm}
 	\caption{The bordered diagrams $\HD_\infty, \HD_{-1}, \HD_0$ for the three  tori filled at slopes $\infty$, $-1$, and $0$. The $\alpha$-arcs have been oriented consistently across the three diagrams. Further, the $\beta$-circles are oriented, and flows have been labeled.}\label{fig:surg-tri}
\end{figure}

Suppose we are to mimic the approach in \cite[Section 11.2]{bfh2} to prove Conjecture~\ref{conj:tri}. In order to define type $D$ structures over $\Z$, we need to work with nice Heegaard diagrams. To obtain nice diagrams, we perform isotopies on the $\beta$ circles of two of the diagrams from  \cite[Section 11.2]{bfh2}; the resulting diagrams  $\HD_\infty$, $\HD_{-1}$, and $\HD_0$ are presented in Figure~\ref{fig:surg-tri}. We then fix a consistent orientation of the $\alpha$-curves across the three diagrams so that each has the same associated signed pointed matched circle. Finally, we orient the $\beta$-curves and choose a type $D$ sign assignment $S$ compatible with these choices. Let $\phi_1, \ldots, \phi_8$ be the flows on these three diagrams, as labeled on Figure~\ref{fig:surg-tri}. Note that $\phi_2$ and $\phi_6$ have the same associated formal flow, as do $\phi_3$ and $\phi_4$. Below, we will abuse notation and write $S(\phi_i)$ to mean the sign of the formal flow associated to $\phi_i$. In this notation, we have $S(\phi_2) = S(\phi_6)$ and $S(\phi_3) = S(\phi_4)$. With the above choices, we obtain the following type $D$ structures. 

\begin{itemize}
\item[] $\cfdhat(\HD_{\infty};\Z)\colon \hspace{.4cm} \delta^1(r)=S(\phi_2)\rho_2 \otimes t$  \hspace{.4cm} $\delta^1(s)=S(\phi_3)\rho_3 \otimes r +S(\phi_1) \otimes t$ \hspace{.4cm} $\delta^1(t)=0$
\vspace{.2cm}
\item[] $\cfdhat(\HD_{-1};\Z)\colon \hspace{.43cm}  \delta^1(a)=S(\phi_4)\rho_3 \otimes b + S(\phi_5)\rho_1 \otimes b$ \hspace{.4cm} $\delta^1(b)=0$ 
\vspace{.2cm}
\item[] $\cfdhat(\HD_0;\Z)\colon \hspace{.5cm}  \delta^1(n)=S(\phi_7)\rho_1 \otimes q$ \hspace{.4cm} $\delta^1(p)=S(\phi_6)\rho_2 \otimes n +S(\phi_8) \otimes q$ \hspace{.4cm} $\delta^1(q)=0$
\end{itemize}
We show that the maps used in \cite[Section 11.2]{bfh2} do not extend over $\Z$ to a short exact sequence between these structures.

We observe that $\cfdhat(\HD_{\infty};\Z)$ is homotopy equivalent to the type $D$ structure $N_{\infty}$ with a single generator $r$ and structure map $\delta^1(r)=-S(\phi_1)S(\phi_2)S(\phi_3)\rho_{23}\otimes r$.    Similarly, $\cfdhat(\HD_0;\Z)$  is homotopy equivalent to the type $D$ structure $N_0$ with a single generator $n$ and structure map $\delta^1(n)=-S(\phi_6)S(\phi_7)S(\phi_8)\rho_{12}\otimes n$. Last, let $N_{-1}$ denote $\cfdhat(\HD_{-1};\Z)$.
   We introduce  shortcut notation for the signs in the structure maps for $N_{\infty}$, $N_{-1}$, and $N_{0}$. Namely, let $S_1 = -S(\phi_1)S(\phi_2)S(\phi_3)$, $S_5 = -S(\phi_6)S(\phi_7)S(\phi_8)$, $S_2 = S(\phi_4)$, and $S_6 = S(\phi_5)$.

   Note that there exist formal flows $\phi_9$ and $\phi_{10}$ which form $\beta$-degenerations with the formal flows $F(\phi_1)$ and $F(\phi_8)$, respectively. Furthermore, the formal flows $F(\phi_5)$, $F(\phi_7)$, $\phi_9$, and $\phi_{10}$ form a type $D$ bordered square, with bordered flows of type $\rho_1$. This implies that 
   \begin{align*}
   S_1S_2S_5S_6 &=S(\phi_1)S(\phi_2)S(\phi_3)S(\phi_4)S(\phi_5)S(\phi_6)S(\phi_7)S(\phi_8)\\
   &= S(\phi_1)S(\phi_5)S(\phi_7)S(\phi_8)\\
   &= S(\phi_5)S(\phi_7)S(\phi_9)S(\phi_{10})\\
   &= (-1)^{\mid \rho_1 \mid+1}\\
   &= (-1)^{\mid \rho_2 \mid}.
   \end{align*}

We would like to build a short exact sequence of type $D$ homomorphisms $0 \to N_{\infty} \to N_{-1} \to N_{0} \to 0$, which would then induce a surgery exact triangle. We check if this is possible for maps obtained by introducing signs to the homomorphisms $\overline{\phi}$ and $\overline{\psi}$ used in \cite[Section 11.2]{bfh2}. 

Consider the maps $\phi: N_{\infty} \to N_{-1}$ given by $\phi(r)= S_4 \rho_2 \otimes a + S_3 b$ and $\psi:N_{-1} \to N_0$ given by $\psi(a)=S_8 n$ and $\psi(b)=S_7 \rho_2 n$, where $S_3$, $S_4$, $S_7$, and $S_8$ are in $\{\pm 1\}$. 

Suppose that $\phi$ is a $D$ homomorphism. Then we must have
\begin{align*} 
0 & =(\delta^1 \circ \phi - \phi \circ \delta^1)(r)\\
 &=(\mu_2 \otimes \id)\circ (\id \otimes \delta)(S_4 \rho_2 \otimes a + S_3 \otimes b)-(\mu_2 \otimes \id)\circ (\id \otimes \phi)(S_1\rho_{23}\otimes r) \\
						       &=(\mu_2 \otimes \id)((-1)^{\mid \rho_2 \mid}(S_4 \rho_2 \otimes((S_2 \rho_3 + S_6 \rho_1) \otimes b)-(\mu_2 \otimes \id)(S_1\rho_{23} \otimes (S_4 \rho_2 \otimes a + S_3 \otimes b) \\
						       &=(-1)^{\mid \rho_2 \mid}S_2S_4 \rho_{23} \otimes b - S_1S_3\rho_{23} \otimes b
\end{align*}
which implies $S_1S_2S_3S_4=(-1)^{\mid \rho_2 \mid}$. Now suppose $\psi$ is a $D$ homomorphism. Then we must have
\begin{align*} 
0&=(\delta^1 \circ \psi - \psi \circ \delta^1)(a)\\
 &=(\mu_2 \otimes \id)\circ (\id \otimes \delta)(S_8 \otimes n)-(\mu_2 \otimes \id)\circ (\id \otimes \phi)((S_2 \rho_3 + S_6 \rho_1) \otimes b \\
						       &=(\mu_2 \otimes \id)(S_8 \otimes S_5 \rho_{12}\otimes n)-(\mu_2 \otimes \id)(S_2 \rho_3 + S_6 \rho_1) \otimes S_7 \rho_2 \otimes n\\
						       &=S_5S_8 \rho_{12} \otimes n - S_6S_7\rho_{12} \otimes n
\end{align*}
which implies $S_5S_6S_7S_8=1$.  Combining these two sign relations, we see that we must have 
\begin{equation}\label{eqn:1}
\displaystyle\prod_{i=1}^8 S_i = (-1)^{\mid \rho_2 \mid}.
\end{equation}

On the other hand, if these two maps are to fit into a short exact sequence, we must have that $S_3S_4S_7S_8=-1$. Since $S_1S_2S_5S_6 =  (-1)^{\mid \rho_2 \mid}$, we must also have 
\begin{equation}\label{eqn:2}
\displaystyle\prod_{i=1}^8 S_i = (-1)^{\mid \rho_2 \mid +1}.
\end{equation} 
Equations~\ref{eqn:1} and \ref{eqn:2} give a contradiction, so $\phi$ and $\psi$ cannot simultaneously be type $D$ homomorphisms and satisfy exactness.

Thus, one would need to dig deeper for a proof of Conjecture~\ref{conj:tri}.

\bibliographystyle{alpha}
\bibliography{master}

\end{document}